\DeclareMathOperator{\gr}{Graph}
\theoremstyle{plain}
\newtheorem{thm}{Theorem}[section]
\newtheorem{lem}[thm]{Lemma}
\newtheorem{cor}[thm]{Corollary}
\newtheorem{prop}[thm]{Proposition}
\newtheorem*{thm*}{Theorem}
\newtheorem*{lem*}{Lemma}
\newtheorem*{cor*}{Corollary}
\theoremstyle{definition}
\newtheorem{dfn}[thm]{Definition}
\newtheorem{rmk}[thm]{Remark}
\newtheorem{hp}[thm]{Hypothesis}
\newtheorem{cond}[thm]{Condition}
\newtheorem*{dfn*}{Definition}
\newtheorem*{hp*}{Hypothesis}
\numberwithin{equation}{section}\numberwithin{figure}{section}
\def\wh{\widehat}
\def\e#1\e{\begin{equation}#1\end{equation}}
\def\iz#1\iz{\begin{itemize}#1\end{itemize}}
\def\ea#1\ea{\e{\begin{split}#1\end{split}}\e}
\def\eq{\eqref}
\def\l{\label}
\def\0{\hspace{0pt}}
\def\ph{\phi}
\def\Uh_#1{\,\widehat{\!U}_{\!#1}}
\def\Ps{\Psi}
\def\Ph{\Phi}
\def\et{\eta}
\def\ps{\psi}
\def\ker{\mathop{\rm ker}}
\def\Im{\mathop{\rm Im}}
\def\ge{\geqslant}
\def\le{\leqslant\nobreak}
\def\cG{{\mathbin{\cal G}}}
\def\cU{{\mathbin{\cal U}}}
\def\={\equiv}
\def\C{{{\mathbb C}}}
\def\R{{\mathbin{\mathbb R}}}
\def\Z{{\mathbin{\mathbb Z}}}
\def\al{\alpha}
\def\be{\beta}
\def\ga{\gamma}
\def\de{\delta}
\def\io{\iota}
\def\ep{\epsilon}
\def\la{\lambda}
\def\ka{\kappa}
\def\th{\theta}
\def\ta{{\tau}}
\def\ze{\zeta}
\def\si{\sigma}
\def\om{\omega}
\def\De{\Delta}
\def\Si{\Sigma}
\def\Th{\Theta}
\def\Om{\Omega}
\def\Up{\Upsilon}
\def\id{{\rm id}}
\def\d{{\rm d}}
\def\ts{\textstyle}
\def\w{\wedge}
\def\-{\setminus}
\def\bu{\bullet}
\def\op{\oplus}
\def\bop{\bigoplus}
\def\ot{\otimes}
\def\ov{\overline}
\def\iy{\infty}
\def\t{\times}
\def\cm{\circ}
\def\nb{\nabla}
\def\sb{\subseteq}
\def\sing{{\rm sing}}
\begin{document}
\date{}
\title{Remarks on the Gluing Theorems for Compact Special Lagrangian Submanifolds with Isolated Conical Singularities}
\author{Yohsuke Imagi}
\maketitle

\begin{abstract}
Let $X$ be a compact special Lagrangian submanifold with isolated conical singularities, and let there exist local smoothings of these singularities. It is then known that under certain hypotheses there exist global smoothings of $X.$ We prove the same results under weaker hypotheses. We prove also that amongst these hypotheses that concerning the cohomology classes of $X$ and the local smoothings is strictly necessary. We prove further that the global smoothings are $C^\iy$ with respect to the relevant parameters.   
\end{abstract}

\section{Introduction}
In this paper we improve upon the gluing construction theorems of Joyce \cite{J3,J4}. Under certain hypotheses Joyce constructs smoothings of compact special Lagrangian submanifolds with isolated conical singularities.

Our first main result may be stated as follows. Let $M$ be a K\"ahler manifold of complex dimension $m>2,$ whose canonical bundle is holomorphically trivial. Let $M$ have a $C^\iy$ family $(\om^s,J^s,\Om^s)_{s\in\R^n}$ of almost Calabi--Yau structures (in the sense of Definition \ref{dfn: almost CY}). Let $X\sb (M;\om^0,J^0,\Om^0)$ be a special Lagrangian submanifold whose closure $\bar X$ is compact and of the form $X\sqcup X^\sing$ for some finite set $X^\sing.$ Suppose that for each $x\in X^\sing$ there exists a unique tangent cone $C_x$ to $X$ at $x$ (which is smooth outside vertex and of multiplicity one). More precisely, we suppose that $X$ approaches with power order at $x$ (as in Definition \ref{dfn: ics}) the cone $C_x.$ Let $L_x$ be a smoothing model for $C_x,$ which is a special Lagrangian submanifold of $T_xM\cong\C^m$ which approaches (with order $\le0$) at infinity the cone $C_x.$ Recall from \cite{J3,J4} that for each $x\in X^\sing$ there are two cohomology classes $Y(L_x)\in H^1(C_x,\R)$ and $Z(L_x)\in H^{m-1}(C_x,\R);$ for the definition see also Definition \ref{dfn: ACL}. We prove 

\begin{thm}\l{1.1}
Let $X$ be connected. Then there exists $t_0>0$ such that we can glue together $X$ and $(L_x)_{x\in X^\sing}$ into a family $(N^t)_{t\in(0,t_0)}$ of compact special Lagrangian submanifolds of $(M;J^0,\om^0,\Om^0).$ Here $t$ is the parameter with which we rescale each $L_x.$
\end{thm}

This is proved in \cite[Theorem 6.13]{J4} under the additional hypothesis $m<6.$ We get rid of this as follows. We begin with the same approximate solutions as in \cite{J3,J4} which we make from $X$ and $\bigsqcup_{x\in  X^\sing}L_x$ using partitions of unity (not doing analysis yet). We then perturb these to the true solutions, not by the method of \cite{J3,J4} but by the method of \cite{I2,P}. The advantage of the latter is that the two building blocks $X$ and $\bigsqcup_{x\in  X^\sing}L_x$ play symmetric r\^oles. This happens to other gluing methods including those in Yang--Mills gauge theory. For instance, if we want to glue together the model ASD connections over $\R^4$ and the flat connection over a compact $4$-manifold $M$ then we can take the actual building blocks to be $S^4=\R^4\sqcup\{\iy\}$ and $M$ (as in \cite[\S7.2]{DK}) whose r\^oles are symmetric.

More specifically, in \cite{J3,J4} the dimension hypothesis $m<6$ is crucial to an $L^{\frac{2m}{m+2}}$ estimate for $\Im\Om|_N;$ for more details see Remark \ref{worse rmk}. As in Remark \ref{exp} the $L^{\frac{2m}{m+2}}$ estimate is crucial to \cite[Theorem 5.3]{J3}. In our treatment however we do not use \cite[Theorem 5.3]{J3} to prove Theorem \ref{1.1} above. The analogue of the $L^{\frac{2m}{m+2}}$ estimate is Proposition \ref{near prop}.

The dimension hypothesis appears also in \cite[Theorem 6.12]{J4} in which $X$ is no longer connected. We prove again that the dimension hypothesis is unnecessary in these circumstances; that is, the following holds.
\begin{thm}\l{1.2}
Let $(M;J^0,\om^0,\Om^0),X, (C_x)_{x\in X^\sing}$ and $(L_x)_{x\in X^\sing}$ be as in Theorem \ref{1.1} except that $X$ should be connected. Denote by $N$ the compact manifold without boundary which we obtain from $X$ and $\bigsqcup_{x\in X^\sing}L_x$ after we identify their common ends. Let $N$ be connected. Let $C_{xj}$ be as in Definition \ref{dfn: ics} and define $\ps^0:M\to(0,\iy)$ by \eq{ps} with $(\om^0,J^0,\Om^0)$ in place of $(\om,J,\Om).$ Suppose that every connected component $Y\sb X$ satisfies the equation
\e\l{coh1}
\sum_{C_{xj}\sb Y}\ps^0(x)^m[C_{xj}\cap S^{2m-1}]\cdot Z(L_x)=0.
\e
Then we can glue together $X$ and $(L_x)_{x\in X^\sing}$ as in Theorem \ref{1.1}.
\end{thm}
\begin{rmk}\l{1.3}
The definition of $Z(L_x)$ implies readily that the sum over $Y$ of the left-hand side of \eq{coh1} is equal to zero. So \eq{coh1} holds automatically for $X$ connected as in Theorem \ref{1.1}.
\end{rmk}

The reason that the dimension hypothesis is unnecessary for Theorem \ref{1.2} is essentially the same as that for Theorem \ref{1.1}. There is however a new issue in the proof of Theorem \ref{1.2}. In the proof of Theorem \ref{1.1} the perturbations of the first approximate solutions are the solutions we want. In the proof of Theorem \ref{1.2} however the corresponding perturbations are not the true solutions yet. These are called the second approximate solutions. 

Here is why they are not the solutions yet. When we perturb the first approximate solutions, we linearize the differential operator which defines special Lagrangian submanifolds. But as $X$ is not connected, the linearized operator is in general not surjective.

This is why we need the condition \eq{coh1}; roughly speaking, \eq{coh1} means that the linearized operator is surjective. More precisely, \eq{coh1} implies that the second approximate solutions satisfy much better estimates than the first approximate solutions. As a consequence, the perturbation theorem \cite[Theorem 5.3]{J3} applies to the second approximate solutions and we can perturb them to the true solutions. So in the proof of Theorem \ref{1.2} we use both the perturbation method in \cite{J3,J4} and that of \cite{I2,P}. 

Our next main result is about the most general version \cite[Theorem 8.9]{J4} of the gluing theorems in \cite{J3,J4}. We get rid of the dimension hypothesis and another hypothesis on the family $(Z(L_x))_{x\in X^\sing};$ that is, the following holds.
\begin{thm}\l{1.4}
Let $(M;J^s,\om^s,\Om^s)_{s\in\R^n},X, (C_x)_{x\in X^\sing}$ and $(L_x)_{x\in X^\sing}$ be as at the beginning of this section. Let $N$ be as in Theorem \ref{1.1}. Define $\ps^s:M\to(0,\iy)$ by \eq{ps} with $(\om^s,J^s,\Om^s)$ in place of $(\om,J,\Om).$ Let $\cG\sb \R^n\times (0,\iy)$ be the set of $(s,t)$ such that 
\iz
\item[\bf(i)]
the image of $[\om^s]\in H^2(M)$ under the natural map $H^2(M)\to H^2(\bar X)\cong H^2_c(X)$ agrees with the image of $\bop_{x\in  X^\sing}t^2Y(L_x)\in \bop_{x\in X^\sing}H^1(C_x)$ under the natural map $\bop_{ X^\sing}H^1(C_x)\to H^2_c(X);$ and
\item[\bf(ii)]
every connected component $Y\sb X$ satisfies the equation
\e\l{coh2}
\sum_{C_{xj}\sb Y}t^m\ps^s(x)^m[C_{xj}\cap S^{2m-1}]\cdot Z(L_x)=[\bar Y]\cdot [\Im\Om^s].
\e
\iz
Then for every $\th>2$ there exist $t_0>0$ and a family $(N^{st}\sb M:(s,t)\in\cG,|s|< t^\th<t_0^\th)$ of compact special Lagrangian submanifolds of $(M;J^s,\om^s,\Om^s).$
\end{thm}
\begin{rmk}
The condition (i) above is equivalent to saying that $N^{st}\sb (M,\om^s)$ is a Lagrangian submanifold. We discuss more about (ii) in what follows.
\end{rmk}

In \cite[(135) and (136)]{J4} it is supposed that both sides of \eq{coh2} vanish. If $X$ is connected then as in Remark \ref{1.3} the left-hand side of \eq{coh2} vanishes automatically and the equation \eq{coh2} is equivalent to saying that both sides of it vanish. But for $X$ disconnected it is artificial to suppose that both sides of \eq{coh2} vanish. We prove indeed that there is a slightly weaker version of \eq{coh2} which is a necessary condition for the gluing process to be possible. For the more precise statements see \eq{eq for Om} and Theorem \ref{obst thm}. Our results imply also that \eq{coh2} or its weaker version \eq{eq for Om} is sufficient for the gluing process to be possible. The proof of this is essentially the same as that of Theorem \ref{1.2}.

We begin in \S\ref{st sec} with the more precise statements of our results. In \S\ref{obst sec} we prove that the weaker equation \eq{eq for Om} is necessary for the gluing process to be possible. In \S\S\ref{app sec}--\ref{Proof of 1} we prove also that it is a sufficient condition.

In \S\ref{app sec} we produce the first approximate solutions, which is essentially the same as in \cite{J3,J4}. In \S\ref{Proof of 1--3} we estimate how close these are to being special Lagrangian. As we drop the dimension hypothesis, the estimates we get are weaker than those in \cite{J3,J4}. But as we have explained above, we follow the method of \cite{I2,P} rather than that of \cite{J3,J4}; and the weaker estimates will do for our purposes.  

In \S\ref{pert2} we prove the quadratic estimates for the non-linear part of the special Lagrangian equations, which is not very different from those in \cite{J3,J4}. In \S\ref{pert3} we prove that the linearized operators are uniformly invertible, which is essentially the same as in \cite{I2,P}. In \S\ref{pert4} we produce the second approximate solutions. If $X$ is connected then these solutions are already the true solutions as we have explained above. In \S\ref{Proof of 1} we perturb the second approximate solutions to the true solutions. 

In \S\S\ref{sect10} and \ref{sect11} we prove that the special Lagrangian submanifold $N^{st}$ produced in Theorem \ref{1.4} depends smoothly on $s,t.$ The conditions (i),(ii) in Theorem \ref{1.4} imply that the set of $(s,t)$ which parametrizes $N^{st}$ is a closed set in the $(s,t)$ space. Smooth with respect to $s,t$ means that every $(s,t)$ has an open neighbourhood to which $N^{st}$ extend smoothly. The extended $N^{st}$ is no longer a special Lagrangian submanifold. In other words, we introduce an auxiliary term $\Xi^{st}$ in the equation we solve; for the definition of $\Xi^{st}$ see \eq{Xi}. For $(s,t)$ satisfying (i),(ii) of Theorem \ref{1.2} we have $\Xi^{st}=0$ and the equation we solve is that of special Lagrangian submanifolds. Otherwise the equation has no such geometric meaning but the parameter space is now an open subset of the $(s,t)$ space.  

The second approximate solutions and the true solutions both extend to this open set of $(s,t).$ They are produced by the contraction mapping theorem. So there are sequences of functions whose limits are the solutions. We prove in Theorem \ref{3} that these functions depend smoothly on $s,t.$ Differentiating them (both in the original manifold direction and in the $(s,t)$ direction) we get countably many sequences of functions. We prove by inductions that these functions depend smoothly on $s,t.$ They are in fact Cauchy sequences such that the limit functions will depend smoothly on $s,t.$ The details are given in \S\ref{sect10}. Using its results we conclude in \S\ref{sect11} that the second approximate solutions and the true solutions both depend smoothly on $s,t.$

{\bf Acknowledgements.} The author was supported by the grant 21K13788 of the Japan Society for the Promotion of Science.

\section{Statements of the Results}\l{st sec}
We begin by making the definitions we will use to state our results. We define almost Calabi--Yau $m$-folds in the first place.
\begin{dfn}\l{dfn: almost CY}
Let $M$ be a manifold of even dimension $2m.$ An {\it almost Calabi--Yau} structure on $M$ is the data $(\om,J,\Om)$ such that $(M;\om,J)$ is a K\"ahler manifold with K\"ahler form $\om$ and complex structure $J,$ and such that $\Om$ is a nowhere-vanishing holomorphic $(m,0)$ form on the complex manifold $(M,J).$ We call $(M;\om,J,\Om)$ an {\it almost Calabi--Yau $m$-fold.} Notice that there is a $C^\iy$ function $\ps:M\to(0,\iy)$ defined by
\e\l{ps} \ps^{2m}\om^m/m!=(i/2)^m(-1)^{m(m-1)/2}\Om\w\ov\Om.\e
We call $(\om,J,\Om)$ a {\it Calabi--Yau} structure, without the word almost, if $\ps:M\to(0,\iy)$ is constant. In this case $\om$ is a Ricci-flat K\"ahler form on $(M,J).$
\end{dfn}

We define special Lagrangian submanifolds of almost Calabi--Yau $m$-folds. 
\begin{dfn}
Let $(M;\om,J,\Om)$ be an almost Calabi--Yau $m$-fold and $L\sb (M,\om)$ a Lagrangian submanifold. We say that $L$ is {\it special} if $\Im\Om|_L=0.$
\end{dfn}
We define a Calabi--Yau structure $(\om',J',\Om')$ on $\C^m$ and special Lagrangian cones in $(\C^,\om',J',\Om').$
\begin{dfn}\l{dfn C^m}
Let $m\ge1$ be an integer and $z_1,\dots,z_m$ the complex coordinates of $\C^m.$ Denote by $J'$ the complex structure of $\C^m$ and define on $\C^m$ a Calabi--Yau structure $(\om',J',\Om')$ by $\om':=\frac{i}{2}({\rm d}z_1\w {\rm d}\bar z_1+\dots+{\rm d}z_m\w {\rm d}\bar z_m)$ and $\Om':={\rm d}z_1\w\dots\w {\rm d}z_m.$ 

Define $(0,\iy)\times\C^m\to\C^m$ by $(t,z)\mapsto tz,$ by which the multiplicative group $(0,\iy)$ acts upon $\C^m.$ For $t\in(0,\iy)$ we denote for short by $t:\C^m\to\C^m$ the map $z\mapsto t z.$ A {\it special Lagrangian cone} in $(\C^m;\om',J',\Om')$ is a special Lagrangian submanifold $C\sb (\C^m;\om',J',\Om')$ invariant under the $(0,\iy)$ action and such that $C\cap S^{2m-1}$ is a compact $(m-1)$ dimensional submanifold of the unit sphere $S^{2m-1}\sb\C^m.$ The {\it cone metric} on $C$ is the Riemannian metric induced from the embedding $C\sb\C^m.$
\end{dfn}
\begin{rmk}
Our $C$ corresponds to Joyce's $C'$ in \cite{J1,J2,J3,J4,J5}, and our $\bar C=C\sqcup\{0\}$ to Joyce's $C.$  We change the notation because we refer more often to $C$ than to $\bar C.$
\end{rmk}

We define an asymptotically conical special Lagrangian submanifold $L\sb\C^m$ and its cohomology classes $Y(L),Z(L).$
\begin{dfn}\l{dfn: ACL}
Let $C\sb(\C^m;\om',J',\Om')$ be a special Lagrangian cone and $L\sb (\C^m;\om',J',\Om')$ a properly-embedded special Lagrangian submanifold. We say that $L$ approaches {\it with order $\la<2$ at infinity} the cone $C$ if there exist a compact set $K\sb \C^m$ and a diffeomorphism $\ph:C\-K\cong L\-K$ such that if we denote by $\io:C\to\C^m$ the inclusion map then for $k=0,1,2,\dots$ we have $|\nb^k(\ph-\io)|=O(r^{\la-1-k})$ where $|\,\,|$ and $\nb$ are computed with respect to the cone metric on $C.$ 

Let $C\sb(\C^m;\om',J',\Om')$ be a special Lagrangian cone and $L\sb (\C^m;\om',J',\Om')$ a properly-embedded special Lagrangian submanifold which approaches with order $<2$ at infinity the cone $C.$ Using the homology groups with coefficient field $\R$ we make the following definition: $Y(L)\in H^1(C)$ and $Z(L)\in H^{m-1}(C)$ are the respective images of the relative de Rham classes
\[ [\om'|_L]\in H^2(\C^m,L)\cong H^1(L)\text{ and } [\Im\Om'|_L]\in H^m(\C^m,L)\cong H^{m-1}(L)\]
under the natural maps $H^1(L)\to H^1(C)$ and $H^{m-1}(L)\to H^{m-1}(C).$ 
\end{dfn}

We define compact special Lagrangian submanifolds with isolated conical singularities. 
\begin{dfn}\l{dfn: ics}
Let $(M;\om,J,\Om)$ be an almost Calabi--Yau $m$-fold and define $\ps:M\to(0,\iy)$ by \eq{ps}. Let $X\sb (M;\om,J,\Om)$ be a special Lagrangian submanifold with compact closure $\bar X=X\sqcup X^\sing$ for some finite set $X^\sing.$ For each $x\in X^\sing$ choose a $\C$-vector space isomorphism $(T_xM,J|_x)\cong(\C^m,J')$ under which $(\om|_x,\Om|_x)$ corresponds to $(\om',\Om').$ We call $\bar X$ a compact special Lagrangian submanifold {\it with isolated conical singularities} if for each $x\in X^\sing$ there exist a special Lagrangian cone $C_x\sb (\C^m,\om',J',\Om'),$ an open neighbourhood $U_x$ of $0\in\C^m,$ and a Darboux embedding $\Up_x:U_x\to M$ such that $\Up_x(0)=x,$ $\Up_x^*\Om=\ps(x)^m\Om',$ $\Up_x^*\om=\om'$ and the following holds: there exist $\mu>2$ and a diffeomorphism $\ph:C_x\cap U_x\to \Up_x^{-1}(X)$ such that if we denote by $\io:C_x\to \C^m$ the inclusion map then for $k=0,1,2,\dots$ we have $|\nb^k(\ph-\io)|=O(r^{\mu-1-k})$ where $|\,\,|,\nb$ are computed with respect to the cone metric on $C_x.$ 

We call $C_x$ the {\it tangent cone} to $X$ at $x.$ We introduce the following notation for short: for each connected component $Y\sb X$ and each connected component $C_{xj}\sb C_x$ we write $C_{xj}\sb Y$ if $\Up_x(C_{xj}\cap U_x)\sb Y.$ 
\end{dfn}
\begin{rmk}
Denote by $H^*_c(X):=H^*_c(X,\R)$ the compact-support cohomology group. Since $X$ is outside a compact set diffeomorphic to $\bigsqcup_{x\in X^\sing}C_x$ we get a long exact sequence $\dots \to H^*_c(X)\to H^*(X)\to \bop_{x\in X^\sing} H^*(C_x)\to\cdots.$ Since $m\ge 3$ it follows also that for $q=2,3,4,\dots$ there are natural $\R$-vector space isomorphisms $H^q_c(X)\cong H^q(\bar X, X^\sing)\cong H^q(\bar X).$
\end{rmk}

We state now the hypothesis under which we carry out the gluing process.
\begin{hp}\l{hp}
Let $m\ge3$ be an integer and $M$ a $C^\iy$ manifold of dimension $2m.$ Let $n\ge0$ be an integer and $(\om^s,J^s,\Om^s)_{s\in\R^n}$ a $C^\iy$ family of almost Calabi--Yau structures on $M.$ Let $\bar X\sb (M;\om^0,J^0,\Om^0)$ be a compact special Lagrangian submanifold with isolated conical singularities. Suppose that for each $x\in X^\sing$ there exists a properly-embedded special Lagrangian submanifold $L_x\sb  (\C^m;\om',J',\Om')$ which approaches with order $\le0$ at infinity the cone $C_x.$
\end{hp}

We make a definition we will use shortly.
\begin{dfn}\label{smoothness}
Let $n\ge0$ be an integer and $\cG\sb\R^n\times(0,\iy)$ a subset whose closure in $\R^n\times[0,\iy)$ contains $(0,0).$ We say then that a function $\be:\cG\to\R$ is {\it smooth} if every point of $\cG$ has an open neighbourhood in $\R^n\times(0,\iy)$ to which $\be$ extends smoothly. In the same way we define a smooth function $\cG\times N\to M$ for $M,N$ manifolds. Suppose now that $(N^{st}\sb M)_{(s,t)\in\cG}$ a family of submanifolds, each $N^{st}$ diffeomorphic to a common manifold $N.$ We say then that the family is {\it smooth} if there exists a smooth function $\cG\times N\to M$ whose restriction to $\{(s,t)\}\times N$ defines the submanifold $N^{st}\sb M.$
\end{dfn}

We state the condition which will play the central r\^ole in stating our main results. 
\begin{cond}\l{cond}
Under Hypothesis \ref{hp} there exist a subset $\cG\sb\R^n\times(0,\iy)$ whose closure in $\R^n\times[0,\iy)$ contains $(0,0),$ and a smooth family $(N^{st}\sb  M)_{(s,t)\in\cG}$ of compact submanifolds with the following three properties.
\iz
\item[\bf(a)] Each $N^{st}\sb (M;\om^s,J^s,\Om^s)$ is a special Lagrangian submanifold.
\item[\bf(b)] As $(s,t)$ tends to $(0,0)$ the family $N^{st}$ converges in the sense of geometric measure theory to $\bar X.$ More precisely, the multiplicity-one varifolds supported on $N^{st}$ converge to the multiplicity-one varifold supported on $\bar X.$
\item[\bf(c)]
There exist constants $c,\ep>0$ independent of $s,t$ and such that the following holds. For $x\in X^\sing$ let $U_x\sb M$ be as in Definition \ref{dfn: ics}, which we identify with the corresponding neighbourhood of $0\in\C^m.$ Let $K_x\sb \C^m$ be a compact neighbourhood of $0$ and denote by $\io:L_x\cap K_x\to\C^m$ the inclusion map. Then for every $(s,t)$ close enough to $(0,0)$ there exists a diffeomorphism $\ph:L_x\cap K_x\to t^{-1}(N^{st}\cap tK_x)$ such that for $k=0,1,2,\dots$ we have $|\nb^k(\ph-\io)|\le c(|s|^\ep+t^\ep)$ where $|\,\,|,\nb$ are computed pointwise with respect to the metric on $L_x\cap K_x$ induced from the Euclidean metric of $K_x\sb\C^m.$
\iz
\end{cond}
\begin{rmk}
Suppose that these (a)--(c) hold. Combining them with known results we can control $N^{st}$ as follows. By Allard's regularity theorem it follows from (a),(b) above that the family $N^{st}\sb  M$ converges smoothly in every compact set to $X$ as $(s,t)$ tends to $(0,0).$ So there exists a neighbourhood $\bigsqcup_{x\in X^\sing}U_x$ of $X^\sing\sb M$ such that $N^{st}\-\bigsqcup_{x\in X^\sing}U_x$ is $C^\iy$ close to $X\-\bigsqcup_{x\in X^\sing}U_x.$ On the other hand, by (c) above we can control $N^{st}$ also in $\bigsqcup_{x\in X^\sing}tK_x.$ By the proof of \cite[Theorem 2.2]{I0} we can in fact control $N^{st}$ everywhere; that is, the annulus region $\bigsqcup_{x\in X^\sing}(N^{st}\cap U_x\-tK_x)$ is $C^\iy$ close to $\bigsqcup_{x\in X^\sing} (C_x\cap U_x\-tK_x)$ where each $C_x$ is embedded in $X$ as in Definition \ref{dfn: ics}.
\end{rmk}

We make now a definition and state then the result that the cohomology class equations are strictly necessary for the gluing process to be possible.
\begin{dfn}
Let $n\ge0$ be an integer and $\cG\sb\R^n\times(0,\iy)$ a subset whose closure in $\R^n\times[0,\iy)$ contains $(0,0).$ We say then that a function $\be:\cG\to\R$ {\it decays with power decay at $(0,0)\in\ov\cG$} if there exist $\ep,c>0$ such that for every $(s,t)\in \cG$ we have $|\be(s,t)|\le c(|s|^\ep+t^\ep).$
\end{dfn}
\begin{thm}\l{obst thm}
If Hypothesis \ref{hp} and Condition \ref{cond} hold then there exist smooth functions $\be_x,\ga_x:\cG\to\R$ decaying with power order at $(0,0)\in\ov\cG$ and such that for every $(s,t)\in\cG$ the following hold.
\iz
\item[\bf(i)]
The image of $[\om^s]\in H^2(M)$ under the natural map $H^2(M)\to H^2(\bar X)\cong H^2_c(X)$ agrees with the image of $\bop_{x\in  X^\sing}t^2(1+\be_x)Y(L_x)\in \bop_{x\in X^\sing}H^1(C_x)$ under the natural map $\bop_{ X^\sing}H^1(C_x)\to H^2_c(X).$
\item[\bf(ii)]
Define $\ps^s:M\to(0,\iy)$ by \eq{ps} with $(\om^s,J^s,\Om^s)$ in place of $(\om,J,\Om).$ Then for each connected component $Y\sb X$ we have
\e\l{eq for Om}
\sum_{C_{xj}\sb Y}t^m\ps^s(x)^m(1+\ga_x)[C_{xj}\cap S^{2m-1}]\cdot Z(L_x)=[\bar Y]\cdot [\Im\Om^s]
\e
where the sum on the left-hand side is taken over connected components $C_{xj}\sb C_x$ lying in $Y$ after embedded in $M,$ and the right-hand side is the pairing of $[\bar Y]\in H_m(M)$ and $[\Im\Om^s]\in H^m(M).$
\iz
\end{thm}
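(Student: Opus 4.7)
The strategy uses two inputs: (a) $N^{st}$ is special Lagrangian, so $\om^s|_{N^{st}}=0$ and $\Im\Om^s|_{N^{st}}=0$, while $\d\om^s=0$ and $\d\Om^s=0$ globally on $M$; and (b) by Condition \ref{cond} combined with Allard's regularity theorem (as explained in the Remark after Condition \ref{cond}), $N^{st}$ approximates $\bar X$ in three geometric zones---$C^\iy$-close to $X$ on $X\-\bigsqcup_x U_x$, $C^\iy$-close to the scaled model $tL_x$ inside $\Up_x(tK_x)$, and $C^\iy$-close to the cone $C_x$ on the annulus $\Up_x(U_x\-tK_x)$.

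To prove (i), I would test the identity in $H^2_c(X)\cong H^2(\bar X)$ after pairing against classes in $H_2(\bar X)$. Modulo $H_2(X)$, such classes factor through the connecting map $\bop_xH_1(C_x)\ra H_2(\bar X)$; for a class $\Pi_x\in H_1(C_x)$, realise its image by a relative 2-cycle $\Si\sb X$ with $\partial\Si$ representing $\Pi_x$ on $C_x$, so that the left-hand side pairs with $[\Si]$ as $\int_\Si\om^s$. Build a companion 2-chain $\Si^{st}\sb N^{st}$ and a thin 3-chain $W^{st}\sb M$ by patching across the three zones, with
\e\partial W^{st}=\Si-\Si^{st}-D_x^{st},\e
where $D_x^{st}\sb\Up_x(tK_x)$ is a correction 2-chain supported in the model region. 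Stokes with $\d\om^s=0$ and $\om^s|_{N^{st}}=0$ reduces to
\e\int_\Si\om^s=\int_{D_x^{st}}\om^s.\e
Pulling back by $\Up_x^{-1}$ and rescaling $z=tw$, using $\Up_x^*\om^s=\om'+O(|s|)$ on $U_x$ and a primitive $\om'=\d\al$ on $\C^m$, the right-hand side becomes $t^2\int_{\si^{st}}\al+O(|s|t^2)$, where $\si^{st}\sb K_x$ is a 1-cycle that converges as $(s,t)\to(0,0)$ to a fixed 1-cycle in $L_x\cap K_x$ representing a lift of $\Pi_x$, with convergence error $O(|s|^\ep+t^\ep)$ by Condition \ref{cond}(c). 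The limiting integral is $\langle Y(L_x),\Pi_x\rangle$, yielding
\e\int_\Si\om^s=t^2\langle Y(L_x),\Pi_x\rangle\,(1+\be_x)\e
with $\be_x$ smooth in $(s,t)\in\cG$ and $\be_x=O(|s|^\ep+t^\ep)$.

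For part (ii) I would run the same scheme with $\Im\Om^s$ and an $m$-cycle $\bar Y$, building a companion $m$-cycle $N^{st}_Y\sb N^{st}$ and a thin $(m+1)$-chain $W^{st}\sb M$ with
\e\partial W^{st}=\bar Y-N^{st}_Y-\sum_{C_{xj}\sb Y}D_{x,j}^{st},\e
where each $D_{x,j}^{st}\sb\Up_x(tK_x)$ is an $m$-chain corresponding to the component $C_{xj}\sb Y$. Stokes with $\d\Om^s=0$ and $\Im\Om^s|_{N^{st}}=0$ gives
\e[\bar Y]\cdot[\Im\Om^s]=\sum_{C_{xj}\sb Y}\int_{D_{x,j}^{st}}\Im\Om^s.\e
Pulling back, rescaling $z=tw$, and using $\Up_x^*\Om^s=\ps^s(x)^m\Om'+O(|s|)$ near $0$, each summand becomes $t^m\ps^s(x)^m\langle Z(L_x),[C_{xj}\cap S^{2m-1}]\rangle\,(1+\ga_x)$ with $\ga_x$ smooth in $(s,t)$ and $O(|s|^\ep+t^\ep)$, which is exactly \eq{eq for Om}.

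The main obstacle is the uniform construction of the cobounding chains $W^{st},D_x^{st},D_{x,j}^{st}$ through the three geometric zones, so that they depend smoothly on $(s,t)\in\cG$ and carry the power-decay estimates from Condition \ref{cond}(c) through to the error terms. A secondary point is to verify that the error terms can be packaged as multiplicative corrections $(1+\be_x),(1+\ga_x)$ rather than additive; this follows because the leading $t^2$ and $t^m\ps^s(x)^m$ terms are linear in the relevant cohomological pairings while the errors are of strictly higher order in $|s|^\ep+t^\ep$ after rescaling.
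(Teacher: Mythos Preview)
Your overall strategy---use closedness of $\om^s,\Im\Om^s$ and their vanishing on $N^{st}$, then localise near the singular points and rescale---is correct and matches the paper's for part (ii). The paper, however, streamlines the argument in a way that sidesteps your ``main obstacle.'' For (ii) it does not build an $(m{+}1)$-chain $W^{st}$ at all: instead it observes that the manifold-with-boundary $Y^{st}:=N^{st}$-component close to $Y$, capped off by the straight cones $t(0*\Si_{xj})$ over the boundary components $\Si_{xj}\sb t^{-1}\partial Y^{st}$, is already an $m$-cycle homologous to $\bar Y$. Integrating $\Im\Om^s$ over this cycle and using $\Im\Om^s|_{Y^{st}}=0$ gives $[\bar Y]\cdot[\Im\Om^s]=\sum\int_{t(0*\Si_{xj})}\Im\Om^s$ directly, with no cobordism to construct. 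The cone pieces $t(0*\Si_{xj})$ are your $D^{st}_{x,j}$, and the rescaling and primitive argument you outline then goes through exactly as you say. So your plan works, but the explicit $W^{st}$ is unnecessary.

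For part (i) the paper takes a genuinely different and shorter route. Rather than pairing against $H_2(\bar X)$ and building chains $\Si,\Si^{st},W^{st},D^{st}_x$, it argues cohomologically: it defines $Y\bigl(t^{-1}(N^{st}\cap tK_x)\bigr)\in H^1(C_x)$ by the same recipe as $Y(L_x)$ (via the relative class $[\om']\in H^2(K_x^\cm,\,\cdot\,)$), and then the $C^\iy$-convergence $t^{-1}(N^{st}\cap tK_x)\to L_x\cap K_x$ from Condition~\ref{cond}(c) forces this class to equal $(1+\be_x)Y(L_x)$ with $\be_x$ decaying with power order. This avoids entirely the uniform-chain construction you flag as the main difficulty. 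Your dual approach via integration would also work, but you would need to check separately that both sides of (i) vanish when paired with classes in the image of $H_2(X)\to H_2(\bar X)$ (which follows since a compact piece of $X$ embeds in $N^{st}$ where $\om^s$ vanishes), a point your ``modulo $H_2(X)$'' remark leaves implicit.
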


We finally state the gluing theorem under the weaker hypotheses.
\begin{thm}\l{1}
Let Hypothesis \ref{hp} hold. Denote by $N$ the compact manifold without boundary which we obtain from $X$ and $
\bigsqcup_{x\in X^\sing}L_x$ after we identify their common ends. Let $N$ be connected.

Let $\cG\sb\R^n\times(0,\iy)$ be a subset whose closure in $\R^n\times[0,\iy)$ contains $(0,0).$ Suppose that for each $x\in X^\sing$ we are given two smooth functions $\be_x,\ga_x:\cG\to\R$ decaying with power order at $(0,0)\in\ov\cG$ and such that every $(s,t)\in\cG$ satisfies {\rm (i),(ii)} of Theorem \ref{obst thm}. Then for every $\th>2$ there exist $t_0>0$ and a family $(N^{st}\sb M:(s,t)\in\cG,|s|< t^\th<t_0^\th)$ of compact submanifolds satisfying {\rm(a)--(c)} of Condition \ref{cond}.
\end{thm}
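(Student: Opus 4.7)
The plan is to follow the two-step scheme outlined in the introduction. First, for each $(s,t)$ with $|s|\le t^\theta$ and $t$ small, I would build a smooth family of approximate special Lagrangians $N^{st}_0\sb M$ by patching $X$ with the rescaled smoothings $\Up_x(tL_x)$ across an annulus of intermediate radius $r(t)\sim t^\alpha$, $0<\alpha<1$, inside each Darboux chart $\Up_x$; then I would perturb $N^{st}_0$ to a true special Lagrangian $N^{st}$ by a Banach fixed-point argument in which, following \cite{I2,P}, the two building blocks $X$ and $\bigsqcup_x tL_x$ play symmetric r\^oles. The assumption that $N$ is connected kills the obstruction and makes this one-shot perturbation possible, so conditions (i)--(ii) of Theorem~\ref{obst thm} are used only at the level of the approximate solution rather than via a second round of approximation.

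For the first step, in the annulus $r(t)\le|z|\le 2r(t)$ both $\Up_x^{-1}(X)$ and $tL_x$ are power-order close to the cone $C_x$, so a partition-of-unity construction on $C_x$ produces a smooth submanifold $N^{st}_0\sb M$ diffeomorphic to $N$. Condition (i) on $Y(L_x)$ lets one make $N^{st}_0$ Lagrangian up to an exact form of controlled size, which is eliminated by a small Hamiltonian deformation; condition (ii) on $Z(L_x)$ lets one arrange that the $\Im\Om^s$-error on $N^{st}_0$ carries no cohomological obstruction. The resulting weighted-norm bounds on $\om^s|_{N^{st}_0}$ and $\Im\Om^s|_{N^{st}_0}$ will be weaker than those of \cite{J3,J4} (cf.\ Remark~\ref{worse rmk}); this slackness is exactly what allows us to dispense with $m<6$ and with the vanishing conditions of \cite[(135)--(136)]{J4}.

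The analytic core is uniform invertibility, in $(s,t)$, of the linearized deformation operator on $N^{st}_0$. Identifying Lagrangian normal deformations with closed $1$-forms via $\om^s$ and restricting to exact ones, the problem reduces to a nonlinear Laplace-type equation for a function $u$ on $N^{st}_0$ whose linearization is the Hodge Laplacian $\De_{st}$ computed in a natural rescaled conical metric. Working in weighted H\"older spaces $C^{k,\alpha}_\nu(N^{st}_0)$ with a weight $\nu$ adapted simultaneously to the order $\mu>2$ at which $X$ approaches $C_x$ near $x$ and to the order $\le 0$ at which $L_x$ approaches $C_x$ at infinity, and chosen to avoid the indicial roots of the conical Laplacian on $C_x$, the symmetric scheme of \cite{I2,P} gives a uniform bound $\|u\|_{C^{k+2,\alpha}_\nu}\le C\|\De_{st}u\|_{C^{k,\alpha}_{\nu-2}}$ with $C$ independent of $(s,t)$, by decomposing a given right-hand side into pieces supported on $X$ and on each $tL_x$, solving on each building block by the known Fredholm theory, and iterating. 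Connectedness of $N$ removes the only possible kernel element (the constants), giving actual invertibility in the limit and hence uniformly along the family.

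Combining this uniform linear estimate with the quadratic bound on the non-linear part of the special Lagrangian equations from \S\ref{pert2}, a Banach contraction argument produces a unique small $u^{st}$ whose Lagrangian graph is a true special Lagrangian $N^{st}$. Smoothness of the family in $(s,t)$ follows from the implicit-function form of the argument; property (a) of Condition~\ref{cond} is built in by construction; (c) follows from the weighted-norm bound on $u^{st}$ after unwinding the rescaling; and (b) then follows from (c) together with the varifold convergence $tL_x\to\{x\}$ as $t\to 0$. The main obstacle I expect is the uniform linear estimate of the previous paragraph: arranging a single weight $\nu$ that works simultaneously in the two different conical regimes and controlling the symmetric iteration of \cite{I2,P} uniformly in $(s,t)$ across the gluing annulus is the heart of \S\S\ref{pert3}--\ref{Proof of 1}.
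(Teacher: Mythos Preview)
Your outline is correct when $X$ itself is connected, but the theorem only assumes $N$ is connected, and the gap lies precisely in the jump from ``$N$ is connected'' to ``the linearized operator is uniformly invertible''. When $X$ has several components $Y_1,\dots,Y_k$ joined into a connected $N$ by the necks $tL_x$, the Laplacian $\De_{st}$ on $N^{st}_0$ acquires $k-1$ eigenvalues tending to $0$ as $t\to0$: the almost-locally-constant functions equal to different values on different $Y_j$'s. So although $\De_{st}$ is invertible modulo constants for each fixed $t$, its inverse is \emph{not} uniformly bounded, and the one-shot contraction you describe closes only in the special case $|\pi_0 X|=1$. Your symmetric parametrix, built from solutions on $X$ and on $L$, will see this directly: on the $X$ side the model Laplacian has cokernel $H^0(X,\R)\cong\R^k$, so the patching yields a uniformly bounded right inverse only to $\De\oplus E$ for some $E:H^0(X,\R)\to C^{1/2}_{\nu-2}$, not to $\De$ alone.

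This is exactly why the paper needs the two-step mechanism you set aside. The paper first solves the augmented equation $P\,\d u+Ew=0$ (Corollary~\ref{sol cor}); the graph of $\d u$ is not special Lagrangian but carries the controlled error $-Ew$. Condition (ii) of Theorem~\ref{obst thm} is then used \emph{after} this step, not at the level of the first approximate solution: in Lemma~\ref{add lem} one integrates $\Im\Om^s$ over the piece of $\gr\d u$ near each $Y_j$ and matches it against \eqref{eq for Om} to show $|w_Y|=O(t^{m+\ep})$. This upgraded ``second approximate solution'' $Q=\gr\d u$ now satisfies the stronger $L^p$ bounds (vii) of Joyce's Theorem~\ref{5.3}, which finishes the construction. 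So the role you assign to condition (ii)---clearing a cohomological obstruction in the first approximate solution---is not what it does; it is the device that kills the genuine finite-dimensional obstruction $w\in H^0(X,\R)$ arising from the non-uniform invertibility.
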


The submanifold $N^{st}$ may be written as the image of some embedding $(\io^{st}:N\to M)_{(s,t)\in\cG}.$ In this expression $N,M$ are independent of $s,t;$ it is only $\io^{st}$ that depends on $s,t.$ We prove that $\io^{st}$ is $C^\iy$ with respect to $s,t$ in the sense of Definition \ref{smoothness}.

\begin{thm}\l{smooth dependence}
In the circumstances of Theorem \ref{1}  there exists $t_1\in(0,t_0)$ such that if $t<t_1$ then $\io^{st}$ is $C^\iy$ with respect to $s,t.$ 
\end{thm} 

\section{Proof of Theorem \ref{obst thm}}\l{obst sec}
Let Hypothesis \ref{hp} and Condition \ref{cond} hold. For $x\in X^\sing$ choose as in \cite[Theorem 3.8]{J1} a Weinstein neighbourhood of the tangent cone $C_x\sb  \C^m$ and a symplectic embedding of it into the cotangent bundle $T^*C_x.$ The difference from the ordinary Weinstein theorem is the group action by the multiplicative group $(0,\iy).$ This acts on $C_x$ and on $\C^m,$ under which the inclusion map $C_x\to\C^m$ is equivariant. The action $(0,\iy)\curvearrowright C_x$ induces also an $(0,\iy)$ action on $T^*C_x;$ and in particular, if $\al$ is a $1$-form on $C_x$ then $t(\gr\al)=\gr(t^2t_*\al)\sb T^*C_x.$ We require that the neighbourhoods of $C_x\in \C^m$ and $C_x\sb T^*C_x$ are both $(0,\iy)$ invariant and that the diffeomorphism betweem these is $(0,\iy)$ equivariant. 

For $x\in X^\sing$ let $\Up_x:U_x\to M$ be as in Definition \ref{dfn: ics} with $(\om^0,J^0,\Om^0)$ in place of $(\om,J,\Om).$ Following \cite[Theorem 4.4]{J1} write $X\cap U_x$ as the graph of a $1$-form $\et_x$ over $C_x\cap U_x$ in its Weinstein neighbourhood. It is proved in \cite[Lemma 4.5]{J1} that the $1$-form $\et_x$ is exact. Take a Weinstein neighbourhood of $X\cap U_x$ such that the following holds.
\begin{equation}\l{Wein}\parbox{10cm}{
Denote by $\ph:C_x\cap U_x\to X\cap U_x$ the diffeomorphism assigning to each point $y\in C_x\cap U_x$ the unique point of $X\cap U_x=\gr\et_x$ in the fibre over $y$ (in the Weinstein neighbourhood of $C_x\cap U_x).$ Then for every $1$-form $\al$ on $X\cap U_x$ its graph in the Weinstein neighbourhood of $X\cap U_x$ is equal to the graph of $\ph^*\al+\et_x$ in the Weinstein neighbourhood of $C_x\cap U_x.$ 
 }\end{equation}
Following \cite[Theorem 4.8]{J1} we define a Weinstein neighbourhood of $X\sb M.$ 
\begin{dfn}\l{dfn: Wein X}
Define $\ps^s:M\to(0,\iy)$ by \eq{ps} with $(\om^s,J^s,\Om^s)$ in place of $(\om,J,\Om)$ (which is the same as in Theorem \ref{obst thm} (ii)). For each $x\in X^\sing$ choose a $C^\iy$ family of $\C$-vector space isomorphisms $(T_xM,J^s|_x)\cong(\C^m,J')$ under which every $(\om^s|_x,\Om^s|_x)$ corresponds to $(\om',\Om').$ By a family version of Darboux's theorem we can find, making $U_x$ smaller if we need, a Darboux embedding $\Up^s_x:U_x\to M$ such that $\Up^s_x(0)=x,$ $\Up^{s*}_x\Om^s=\ps^s(x)^m\Om'$ and $\Up^{s*}_x\om^s=\om'.$ We often identify $U_x\sb\C^m$ with its image $\Up^s_x(U_x)\sb M.$

Choose a finite-dimensional $\R$-vector space $V$ consisting of closed $2$-forms on $X$ supported in the compact set $X\-\bigsqcup_{x\in X^\sing}U_x$ and such that the natural projection $V\to H_c^2(X)$ is an $\R$-vector space isomorphism. Denote by $\nu^s\in V$ the element whose compact-support cohomology class is equal to the image of $[\om^s]$ under $H^2(M)\to H^2(\bar X)\cong H_c^2(X).$ Recall then from \cite[Theorem 4.8]{J1} that there exists an $s$-dependent neighbourhood $W$ of $X\sb  M$ such that for each $x\in X^\sing$ the pre-image $(\Up^s_x)^{-1}(W)$ agrees with the Weinstein neighbourhood of $X\cap U_x\sb\C^m,$ and $W$ is diffeomorphic to a neighbourhood of the zero-section $X\sb T^*X$  so that if we denote by $\wh\om$ the standard symplectic form of the cotangent bundle $T^*X$ then the K\"ahler form $\om^s$ corresponds to $\wh\om+\nu^s.$ We call $W$ the {\it Weinstein} neighbourhood of $X\sb(M,\om^s).$
\end{dfn}

Consider now the smooth model $L_x\sb\C^m$ for $x\in X^\sing.$ 
\begin{dfn}\l{dfn: L_x}
Following \cite[Theorem 7.4]{J1} let $K_x\sb \C^m$ be a compact neighbourhood of $0\in\C^m$ and write $L_x\- K_x$ as the graph of a $1$-form $\chi_x$ over $C_x\-K_x$ in the Weinstein neighbourhood of $C_x.$ It is proved in \cite[Proposition 7.6]{J1} that the cohomology class $[\chi_x]$ is equal to $Y(L_x).$ Take a Weinstein neighbourhood of $L_x\- K_x$ such that \eq{Wein} holds with $L_x\-K_x$ in place of $X\cap U_x$ and with $\chi_x$ in place of $\et_x.$ Following \cite[Theorem 7.5]{J1} we take a Weinstein neighbourhood of $L_x\sb(\C^m,\om')$ which extends that of $L_x\- K_x$ which we have just defined.
\end{dfn}

Denote by $K_x^\cm$ the interior of the compact subset $K_x\sb\C^m.$ This is by definition an open set in $\C^m.$ Making $K_x$ larger if we need, suppose that $K_x^\cm$ is contractible and $L_x\cap K_x^\cm$ diffeomorphic to $L_x.$ Define then the cohomology class $Y(L_x\cap K_x)\in H^1(C_x)$ to be image of the relative de Rham cohomology class $[\om']\in H^2(K_x^\cm,L_x\cap K_x^\cm)$ under the composite of the natural maps $H^2(K_x^\cm,L_x\cap K_x^\cm)\cong H^1(L_x\cap K_x^\cm)\cong H^1(L_x)\to H^1(C_x\cap S^{2m-1})\cong H^1(C_x).$ 

By hypothesis we are given $N^{st}$ as in Condition \ref{cond}. As $t^{-1}(N^{st}\cap tK_x^\cm)\sb K_x^\cm$ is a Lagrangian submanifold we can assign to it in the same way as above the cohomology class $Y(t^{-1}(N^{st}\cap tK_x^\cm)).$ Part (c) of Condition \ref{cond} implies then that $t^{-1}(N^{st}\cap tK_x)$ converges with power orders to $L_x\cap K_x$ so that \e\l{Y}
Y(t^{-1}(N^{st}\cap tK_x))=(1+\be_x)Y(L_x\cap K_x)
\e
for some $\be_x=\be_x(s,t)$ decaying with power orders. But by definition $Y(L_x\cap K_x)$ and $Y(L_x)$ are equal. The equation \eq{Y} implies therefore (i) of Theorem \ref{obst thm}.

For each connected component $Y\sb  X$ there is a unique connected component of $N^{st}\-\bigsqcup_{x\in X^\sing}tK_x$ that is $C^1$ close to $Y\-\bigsqcup_{x\in X^\sing} tK_x,$ which we denote by $Y^{st}.$ This $Y^{st}$ is a manifold with boundary, with $\partial Y^{st}\sb \bigsqcup_{x\in X^\sing} tK_x.$ For $x\in X^\sing$ let $\Si_{xj}\sb t^{-1}(\partial Y^{st}\cap tK_x)$ be a connected component, which is a compact $m-1$ manifold diffeomorphic to some connected component $C_{xj}\cap S^{2m-1}\sb C_x\cap S^{2m-1}.$ Denote by $0*\Si_{xj}\sb \C^m$ the union of the line segments between the origin $0$ and the points of $\Si_{xj}.$ The union $Y^{st}\cup \bigsqcup_{C_{xj}\sb Y} t(0*\Si_{xj})$ defines in $M$ an $m$-cycle homologous to $\bar Y.$ Integrating $\Im\Om^s$ over this and using $\Im\Om^s|_{Y^{st}}=0$ we get
\e\l{obst2} [\bar Y]\cdot [\Im\Om^s]=\sum_{C_{xj}\sb Y}\int_{t(0*\Si_{xj})}\Im\Om^s.\e
On the other hand, for each $x\in X^\sing$ we have
\e\l{obst3}\int_{t(0*\Si_{xj})}\Im\Om^s=t^m\ps^s(x)^m\int_{0*\Si_{xj}}\Im t^{-m}\ps^s(x)^{-m}t^*\Om^s.\e
Notice that $t^{-m}\ps^s(x)^{-m}t^*\Om^s$ depends smoothly on $(s,t)$ including $(0,0),$ at which it is equal to $\Om',$ so that
\e \l{obst4}
\int_{0*\Si_{xj}}\Im t^{-m}\ps^s(x)^{-m}t^*\Om^s= \int_{0*\Si_{xj}}\Im\Om'+O(|s|+t).\e
Let $\Ps$ be an $m-1$ form on $\C^m$ with $\d\Ps=\Im\Om'.$ Then 
\e\l{obst5}
\int_{0*\Si_{xj}}\Im\Om'=\int_{\Si_{xj}}\Ps.
\e
Notice that $\Si_{xj}$ converges, as $(s,t)$ tends to $(0,0),$ to some connected component of $\partial(L_x\cap K_x)$ which we denote by $\Si^0_{xj}.$ Since $\Si_{xj}$ converges with power orders (as in (c) of Condition \ref{cond}) to $\Si^0_{xj}$ it follows that 
\e \l{obst6}
\int_{\Si_{xj}}\Ps=\int_{\Si^0_{xj}}\Ps+O(|s|^\ep+t^\ep)
\e
for some $\ep>0$ independent of $s,t.$ More precisely, making $K_x$ larger if we need, we can suppose that the $1$-form $\chi_x$ in Definition \ref{dfn: L_x} is defined also near $\partial (C_x\cap K_x).$ Define a diffeomorphism $\ph:\partial(C_x\cap K_x)\cong \partial(L_x\cap K_x)$ by assigning to each point $y\in\partial(C_x\cap K_x)$ the unique point at which the graph of $\chi_x$ intersects the fibre over $y.$ Then $C_{xj}=\ph^{-1}(\Si^0_{xj})$ and
\e\l{obst7}
\int_{\Si^0_{xj}}\Ps=\int_{C_{xj}}\ph^*\Ps.
\e
Finally, by the definition of $Z(L_x)\in H^{m-1}(C_x)\cong H^{m-1}(C_x\cap S^{2m-1})$ the right-hand side of \eq{obst7} may be identified with the paring $\sum_{C_{xj}\sb Y}[C_{xj}\cap S^{2m-1}]\cdot Z(L_x).$ This with \eq{obst2}--\eq{obst7} implies (ii) of Condition \ref{cond}.
\qed

\section{First Approximate Solutions}\l{app sec}
Let the hypotheses of Theorem \ref{1} hold. We work in the circumstances of Definitions \ref{dfn: Wein X} and \ref{dfn: L_x}. We begin by introducing the cylinder metric on the cone $C_x\sb\C^m$ for $x\in X^\sing.$
\begin{dfn}
Denote by $r:C_x\to(0,\iy)$ the Euclidean distance from $0\in\C^m,$ which we call the {\it radius function} on $C_x.$ The {\it cylinder metric} on $C_x$ is then the conformal change by $r^{-2}$ of the cone metric on $C_x.$ This is the product metric $(\d\log r)^2+g_x$ where $g_x$ is the induced metric on $C_x\cap S^{2m-1}$ (on which the cone and cylinder metrics induce the same metric).  

We say that a smooth function $T_x:C_x\cap U_x\to\R$ is {\it of order $\de>0$} if for $k=0,1,2,\dots$ we have $|\nb^k T_x|=O(r^\de)$ where $|\,\,|,\nb$ are computed pointwise on $C_x\cap U_x$ with respect to its cylinder metric. This is equivalent to saying that $|\nb^k T_x|=O(r^{\de-k})$ with respect to the cone metric on $C_x.$
\end{dfn}

Fix $(s,t)\in\cG.$ Recall from \cite[Theorem 2.19(b)]{J1} that there exists on $X$ a $1$-form $\al$ satisfying the following condition.
\begin{cond}\l{cond: al}\iz\item[\bf(i)]
There exists $\de>0$ independent of $s,t$ and such that for each $x\in X^\sing$ there exists on $X\cap U_x\cong C_x\cap U_x$ a $0$-form $T_x$ of order $\de>0$ and with $\al=(1+\be_x)Y(L_x)+\d T_x.$
\item[\bf(ii)]
Give $(M;\om,J)$ the K\"ahler metric and $X\sb M$ the induced metric; then $\d^*\al=0.$
\item[\bf(iii)]
$\d\al$ is a compactly supported $2$-form on $X,$ whose compact-support cohomology class is the image under the natural map $\bop_{x\in X^\sing} H^1(C_x)\to H^2_c(X)$ of the element $(1+\be_x)Y(L_x)\in \bop_{x\in X^\sing} H^1(C_x).$ 
\iz
\end{cond}
We prove a lemma we will use shortly.
\begin{lem}
For $(s,t)\in\cG$ the graph of $t^2\al$ in the Weinstein neighbourhood of $X\sb (M,\om^s)$ is a Lagrangian submanifold.
\end{lem}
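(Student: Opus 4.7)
The plan is to compute the pullback of $\omega^s$ to the graph of $t^2\alpha$ directly in the Weinstein picture of Definition \ref{dfn: Wein X}, and show it vanishes. Under that identification, $W\sb M$ becomes a neighbourhood of the zero section in $T^*X$, $\omega^s$ corresponds to $\widehat{\omega}+\nu^s$ (with $\widehat{\omega}$ the canonical symplectic form on $T^*X$), and the graph of $t^2\alpha$ is the image of the section $s_{t^2\alpha}:X\to T^*X$, $x\mapsto (x,t^2\alpha_x)$.

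First I would use two standard facts: (a) the tautological 1-form on $T^*X$ pulls back to $\beta$ under any section $s_\beta$, so $s_\beta^*\widehat{\omega}=-d\beta$; (b) since $\nu^s$ is pulled back from $X$ via the bundle projection and $\pi\circ s_\beta=\id_X$, we have $s_\beta^*\nu^s=\nu^s$. Applying these with $\beta=t^2\alpha$ gives
\[
 s_{t^2\alpha}^*\omega^s \;=\; \nu^s-t^2\,d\alpha,
\]
so the lemma reduces to the pointwise identity $\nu^s=t^2\,d\alpha$ of $2$-forms on $X$.

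Second, at the level of cohomology this identity is forced: by Condition \ref{cond: al}(iii), $[d\alpha]\in H^2_c(X)$ equals the image of $\bop_x(1+\be_x)Y(L_x)$ under $\bop_x H^1(C_x)\to H^2_c(X)$, and multiplying by $t^2$ and invoking hypothesis (i) of Theorem \ref{obst thm} (which is assumed in Theorem \ref{1}) gives $[t^2\,d\alpha]_c=[\nu^s]_c$. The residual task, and the main obstacle, is upgrading this cohomological equality to equality as forms.

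To handle that, I plan to exploit the latitude in \cite[Theorem 2.19(b)]{J1}: rather than merely prescribing the class of $d\alpha$, the 1-form $\alpha$ should be constructed so that $d\alpha$ equals the distinguished representative $\nu^s/t^2$. This is consistent with Condition \ref{cond: al}(i) because $\nu^s\in V$ is supported in $X\-\bop_{x\in X^\sing}U_x$ by the choice of $V$ in Definition \ref{dfn: Wein X}, and because any $\alpha$ of the form $(1+\be_x)Y(L_x)+\d T_x$ on $X\cap U_x$ with $Y(L_x)$ a closed representative automatically has $d\alpha=0$ there; so both $\nu^s$ and $t^2\,d\alpha$ vanish near every $x\in X^\sing$. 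The technical point requiring care is simultaneously preserving $d^*\alpha=0$ in Condition \ref{cond: al}(ii) while pinning $d\alpha$ to the prescribed exact form; this is a linear Hodge-type problem on $X$ with conical ends and should be resolvable by adding to $\alpha$ a correction term coexact on the bulk and supported away from $X^\sing$. Once such $\alpha$ is in hand, the computation of the first paragraph gives $s_{t^2\alpha}^*\omega^s=0$, proving the lemma.
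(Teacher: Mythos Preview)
Your computation of the pullback $s_{t^2\alpha}^*\omega^s = \nu^s - t^2\,d\alpha$ and the verification that $[\nu^s]=[t^2\,d\alpha]$ in $H^2_c(X)$ are correct and match the paper. Where you diverge is in upgrading this to a pointwise equality of forms: you propose to go back and rebuild $\alpha$ so that $d\alpha$ literally equals $\nu^s/t^2$, and then worry about a ``linear Hodge-type problem'' to simultaneously keep $d^*\alpha=0$.

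The paper's route is much shorter and you have overlooked the structural feature that makes it work. By construction $\nu^s\in V$ (Definition \ref{dfn: Wein X}), and the map $V\to H^2_c(X)$ is an \emph{isomorphism}, in particular injective. So as soon as one knows that $t^2\,d\alpha$ also lies in $V$, the cohomological equality $[\nu^s]=[t^2\,d\alpha]$ immediately forces $\nu^s=t^2\,d\alpha$ as $2$-forms, with no further analysis whatsoever. This is precisely what $V$ was set up to provide: a finite-dimensional space of distinguished representatives in which equality of classes is equality of forms. Your longer detour would eventually work, but the compatibility of $d\alpha\in V$ with Condition \ref{cond: al} (including $d^*\alpha=0$) is something arranged once in the construction of $\alpha$ via \cite[Theorem 2.19(b)]{J1} (or, equivalently, by choosing $V$ compatibly with the $\alpha$'s one produces), not something requiring a fresh Hodge argument at this point in the proof.
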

\begin{proof}
Let $\nu^s$ be as in Definition \ref{dfn: Wein X}. Since the condition (i) of Theorem \ref{obst thm} holds for $(s,t)\in\cG$ (as supposed in Theorem \ref{1}) it follows that the compact-support cohomology class $[\nu^s]$ is the image under the natural map $\bop_{x\in X^\sing} H^1(C_x)\to H^2_c(X)$ of the element $t_x^2Y(L_x)\in \bop_{x\in X^\sing} H^1(C_x).$ The latter is represented by $t^2\d\al,$ by Condition \ref{cond: al} (iii). Since the natural projection $V\to H^2_c(X)$ is an isomorphism it follows therefore that $\nu^s=t^2\d\al.$ But the symplectic form in the Weinstein neighbourhood of $X\sb (M,\om^s)$ is equal to $\wh\om+\nu^s,$ which thus vanishes on the graph of $t^2\al.$
\end{proof}

We define a compact connected Lagrangian submanifold $N\sb (M,\om^s).$ 
\begin{dfn}\l{dfn: N}
We define $N$ by gluing together $\bigsqcup_{x\in X^\sing}(L_x\cap K_x),$ $\bigsqcup_{x\in X^\sing}(C_x\cap U_x\-t_xK_x)$ and $X\-\bigsqcup_{x\in X^\sing} U_x.$ For $x\in X^\sing$ put $t_x:=\sqrt{1+\be_x}t.$ Notice that $t_x=O(t)$ because $\be_x$ decays to $0.$ Identify each $L_x\cap K_x$ with $t_x(L_x\cap K_x)\sb U_x$ and its image under $\Up^s_x:U_x\to M.$ This is then a Lagrangian submanifold of $(M,\om^s).$ 

We interpolate between $t_x(L_x\cap K_x)\sb U_x$ and the graph of $t^2\al$ in the Weinstein neighbourhood of $X\cap\bigsqcup_{x\in X^\sing}U_x.$ Recall from Hypothesis \ref{hp} that each $L_x$ approaches $C_x$ with order $\le0$ at infinity.  This implies that for $x\in X^\sing$ and $k=0,1,2,\dots$ we have $|\nb^k\chi_x|=O(r^{-1})$ on $L_x\-K_x\cong C_x\-K_x$ with respect to the cylinder metric. Identify $Y(L_x)\in H^1(C_x)$ with the harmonic $1$-form on the link $C_x\cap S^{2m-1}$ and identify this further with its pull-back under the projection $C_x\to C_x\cap S^{2m-1}.$ We thus regard $Y(L_x)$ as a closed $1$-form on $C_x.$ Recall then from \cite[Proposition 7.6]{J1} that the difference $\chi_x-Y(L_x)$ is an exact $1$-form on $C_x\-K_x.$ Write this as $\d E_x$ with $E_x$ a $C^\iy$ function $C_x\-K_x\to\R$ decaying at infinity to $0.$ By \cite[Theorem 7.11(b)]{J1} there is in fact a constant $\la<0$ such that for $k=0,1,2,\dots$ we have $|\nb^k\chi_x|=O(r^{\la})$ with respect to the cylinder metric on $C_x\-K_x.$ As $X^\sing$ is finite, we can suppose that $\la$ is independent of $x$ after we make it smaller if we need. Thus $L_x\-K_x$ is the graph of $Y(L_x)+\d E_x,$ and $t_x(L_x\-K_x)$ the graph of $t_x^2Y(L_x)+t_x^2\d t_{x*}E_x.$ 

On the other hand, recall from Definition \ref{dfn: ics} that $X$ approaches with order $>2$ at each $x\in X^\sing$ the cone $C_x.$ So there exists $\mu>2$ such that for $k=0,1,2,\dots$ we have $|\nb^k\et_x|=O(r^{\mu-1})$ where $|\,\,|,\nb$ are computed pointwise on $X\cap U_x\cong C_x\cap U_x$ with respect to its cylinder metric. As $X^\sing$ is finite, we can suppose that $\mu$ is independent of $x$ after we make it smaller if we need. It is easy to show (as in \cite[Lemma 4.5]{J1}) that $\et_x$ is an exact $1$-form $\d A_x$ such that for $k=0,1,2,\dots$ we have $|\nb^kA_x|=O(r^\mu)$ with respect to the cylinder metric on $C_x\cap U_x.$ Condition \ref{cond: al}(i) and Definition \ref{dfn: Wein X} imply now that the graph of $t^2\al$ in the Weinstein neighbourhood of $X\cap U_x$ corresponds to the graph of $t^2(1+\be_x)Y(L_x)+t^2\d T_x+\d A_x$ over $C_x\cap U_x.$

Let $\ta\in(\frac23,1)\cap(\frac2\mu,1)$ be a constant independent of $s,t.$ For $x\in X^\sing$ and $\ep>0$ denote by $B_x(\ep)\sb U_x$ the ball of Euclidean distance $<\ep$ from $0\in\C^m.$ Let $f_x:U_x\to[0,1]$ be a $C^\iy$ function with $f_x=0$ on $B_x(t^\ta)$ and with $f_x=1$ outside $B_x(2t^\ta).$ Consider the graph of
\e
t_x^2\d[(1-f_x)t_{x*}E_x]+t_x^2Y(L_x)+t^2\d(f_x T_x)+\d(f_x A_x) 
\e  
on $C_x\cap U_x\- K_x.$ This smoothly joins to $t_x(L_x\cap K_x)\sb U_x$ and the graph of $t^2\al$ in the Weinstein neighbourhood of $X\cap U_x.$ As a result we get in $(M,\om^s)$ a compact Lagrangian submanifold 
\e\l{N}
N:=\bigsqcup_{x\in X^\sing}(L_x\cap K_x)\sqcup \bigsqcup_{x\in X^\sing}(C_x\cap U_x\- t_xK_x)\sqcup \Bigl(X\-\bigsqcup_{x\in X^\sing} U_x\Bigr).\e
The hypothesis in the first paragraph of Theorem \ref{1} implies that $N$ is connected.
\end{dfn}

We define radius functions $r:X\to(0,\iy),$ $r:\bigsqcup_{x\in X^\sing}:L_x\to(0,\iy)$ and $r:N\to (0,\iy).$ We use the same symbol $r$ for short although these are of course different functions. 
\begin{dfn}
Let $r:X\to(0,\iy)$ be a smooth function which is nearly constant on $X\-\bigsqcup_{x\in X^\sing}U_x$ and such that for each $x\in X^\sing$ the restriction of $r:X\to(0,\iy)$ to $X\cap U_x$ is equal to the restriction of $r:C_x\to(0,\iy)$ to $C_x\cap U_x$ under the diffeomorphism $X\cap U_x\cong C_x\cap U_x$ (corresponding to the $1$-form $\et_x$).

For $x\in X^\sing$ let $r :L_x\to(0,\iy)$ be a smooth function which is nearly constant on $K_x$ and agrees outside $K_x$ with the radius function on $C_x\-K_x$ under the diffeomorphism $L_x\cong K_x\cong C_x\-K_x$ corresponding to the $1$-form $\chi_x.$ Note that $r:\bigsqcup_{x\in X^\sing} C_x\to(0,\iy),$ $r:X\to(0,\iy)$ and $r:\bigsqcup_{x\in X^\sing}:L_x\to(0,\iy)$ are all independent of $s,t.$

There is now a unique smooth function $r:N\to(0,\iy),$ depending smoothly on $s,t$ and such that the following holds: for $x\in X^\sing$ the restriction of $r:N\to(0,\iy)$ to the first piece $t_x(L_x\cap K_x)$ in \eq{N} is equal to the restriction of $t_xr:L_x\to(0,\iy)$ to the same region $t_x(L_x\cap K_x)\sb t_xL_x;$ for $x\in X^\sing$ the restriction of $r:N\to(0,\iy)$ to the second piece $C_x\cap U_x\-t_xK_x$ in \eq{N} is equal to the restriction of $r:C_x\to(0,\iy)$ to the same region $C_x\cap U_x\-t_xK_x\sb C_x;$ and the restriction of $r:N\to(0,\iy)$ to the last piece $(X\-\bigsqcup_{x\in X^\sing} U_x)$ in \eq{N} is equal to the restriction of $r:X\to(0,\iy)$ to the same region $(X\-\bigsqcup_{x\in X^\sing} U_x)\sb X.$

Note that $r:N\to (0,\iy)$ is bounded below by $t$ up to a constant independent of $s,t.$
\end{dfn}

We define conical and cylindrical metrics on $X,\bigsqcup_{x\in X^\sing}L_x$ and $N.$ 
\begin{dfn}\l{dfn: metrics}
For $x\in X^\sing$ the {\it conical metric} on $L_x$ is the Riemannian metric induced from the Euclidean metric on $\C^m.$ The {\it cylindrical metric} on $L_x$ is its conformal change by $r^{-2}:L_x\to(0,\iy).$ The {\it conical metric} on $X$ is the Riemannian metric induced from the K\"ahler metric of $(M;\om,J).$  The {\it cylindrical metric} on $X$ is its conformal change by $r^{-2}:X\to(0,\iy).$ For each $s,t$ the {\it conical metric} on $N$ is the Riemannian metric induced from the K\"ahler metric of $(M;\om^s,J^s).$ The {\it cylindrical metric} on $N$ is its conformal change by $r^{-2}:N\to(0,\iy).$
\end{dfn}

We use H\"older norms with exponent $\frac12\in(0,1);$ the choice of the exponent is unimportant and any other constant in $(0,1)$ will do. Put
\e\l{ka} \ka:=\min\{4-4\ta,(\mu-2)\ta, (2-\la)(1-\ta),2+\ta(\de-2)\}>0.\e
Since $\ta\in(\frac2\mu,1),\mu>2,\la<0$ and $\de>0$ it follows that $\ka>2-2\ta.$ We state now the key estimates for $\Im\Om^s|_N.$
\begin{prop}\l{near prop}
There exists $c>0$ independent of $s,t$ and such that
\begin{align}
\l{near1}
&\ts |r^{-m}\Im\Om|_N|_{C^{1/2}}\le cr\text{ on $\bigsqcup_{x\in X^\sing}(N\cap B_x(t^\ta))$},\\
\l{near2} 
&\ts |r^{-m}\Im\Om|_N|_{C^{1/2}}\le c(|s|r+ t^\ka)
\text{ on $\bigsqcup_{x\in X^\sing}(B_x(2t^\ta)\-B_x(t^\ta))$ and}\\
\l{near3}
&\ts |r^{-m}\Im\Om|_N|_{C^{1/2}}\le c(|s|r+ t^4r^{-4})\text{ on $N\-\bigsqcup_{x\in X^\sing}B_x(2t^\ta)$ }
\end{align}
where the H\"older norms are computed with respect to the cylindrical metric of $N.$
\end{prop}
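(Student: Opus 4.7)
The plan is to treat each of the three estimates separately, exploiting the piecewise description of $N$ from Definition \ref{dfn: N}, together with the pointwise identity $|r^{-m}\om|_{\rm cyl} = |\om|_{\rm cone}$ for an $m$-form $\om$, which lets us equivalently estimate $\Im\Om^s|_N$ in the conical metric of $N$ and transfer the result back to the cylindrical norm at the end.

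For $N\cap B_x(t^\ta)$, $N$ coincides with $\Up^s_x(t_x(L_x\cap K_x))$. Since $L_x$ is special Lagrangian in $(\C^m;\om',\Om')$, we have $\Im\Om'|_{L_x} = 0$, so $\Im\Om^s|_N = \Im(\Up^{s*}_x\Om^s - \ps^s(x)^m\Om')|_{t_x L_x}$. The bracketed $m$-form vanishes at $0\in U_x$ by the matching in Definition \ref{dfn: Wein X}, and hence has pointwise Euclidean size $O(r_M)$ uniformly in small $s$, where $r_M$ is the Euclidean radius in $U_x$. Pulling back along the dilation $p\mapsto t_x p$ and converting to the cylindrical metric of $N$, which agrees with the cylindrical metric of $L_x$ under the identification, yields the claimed $O(r)$ bound in the conical metric of $N$. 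The $C^{1/2}$-seminorm is handled by the same procedure applied to one extra derivative, using smoothness of the Calabi--Yau family in $(s,p)$.

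For $N\setminus\bigsqcup_{x\in X^\sing} B_x(2t^\ta)$, $N$ is the graph of $t^2\al$ over $X$ in the Weinstein neighbourhood. Taylor-expand $\Im\Om^s|_{\gr(t^2\al)}$ in $(s, t^2\al)$ around $(0, 0)$: the value at $(0,0)$ is $\Im\Om^0|_X = 0$ since $X$ is special Lagrangian for $\Om^0$, and the term linear in $\al$ at $s = 0$ is $t^2\d^*_{g^0}\al\cdot\mu_X$, which vanishes by Condition \ref{cond: al}(ii). What remains splits as: (i) the $s$-derivative contributions to $\Im\Om^s|_X$, whose additional $r$-decay near the singular points comes from the chart matching at $0\in U_x$ and from $X$ approaching $C_x$ to order $\mu > 2$, giving an $|s|r$-type bound after using $|s|\le t^\th$; and (ii) quadratic and higher remainders in $t^2\al$, controlled by $t^4\cdot O(|\al|^2_{\rm cone} + |\nb\al|^2_{\rm cone}) = O(t^4 r^{-4})$ since $|\al|_{\rm cone} = O(r^{-1})$ and $|\nb\al|_{\rm cone} = O(r^{-2})$.

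In the transition annulus $B_x(2t^\ta)\setminus B_x(t^\ta)$, $N$ is the graph over $C_x$ of the interpolated 1-form constructed in Definition \ref{dfn: N}. The same Taylor expansion, now applied to this interpolated 1-form in place of $t^2\al$, yields many terms, each estimated in the cylindrical metric using $|\nb^k f_x|_{\rm cyl} = O(1)$ (the annulus has bounded cylindrical width), $|E_x|_{\rm cyl} = O(r^\la)$, $|T_x|_{\rm cyl} = O(r^\de)$, $|A_x|_{\rm cyl} = O(r^\mu)$ and $|Y(L_x)|_{\rm cyl} = O(1)$. Substituting $r\sim t^\ta$ converts these decay rates into powers of $t$, and the four exponents in \eq{ka} arise respectively from the quadratic $t^4$-contribution at $r\sim t^\ta$, the $A_x$-terms, the $E_x$-rescaling terms and the $T_x$-terms; the residual $|s|r$ contribution again comes from the $s$-dependence of $\Om^s$. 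The main obstacle is precisely this bookkeeping in the annulus, where all four exponents in $\ka$ must be simultaneously verified and the cutoff-derivative terms combined cleanly with the building-block contributions.
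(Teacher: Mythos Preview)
Your proposal is correct and follows essentially the same approach as the paper's proof: the paper likewise splits into the three regions, uses that $N=t_xL_x$ on $B_x(t^\ta)$ together with the vanishing of $\Im(\Up_x^{s*}\Om^s-\ps^s(x)^m\Om')$ at the origin (formalised there as Lemma~\ref{near lem} and Corollary~\ref{near cor0}), and on the outer pieces invokes the expansion $P_X(\xi,\nb\xi)=-\d^*(\ps^m\xi)+O(r^{-4}|\xi|^2+r^{-4}|\nb\xi|^2)$ from \cite[Proposition~6.3]{J2}, with the linear term killed by Condition~\ref{cond: al}(ii) and the quadratic and building-block terms producing exactly the four exponents in \eq{ka}. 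Two small imprecisions worth tightening: on $N\cap B_x(t^\ta)$ you should write $t_xL_x$ rather than $t_x(L_x\cap K_x)$, and the $|s|r$ contribution in \eq{near2}--\eq{near3} comes purely from $\Im(\Om^s-\Om^0)$ vanishing both at $s=0$ and at $0\in U_x$ (no need to invoke $\mu>2$ or $|s|\le t^\th$ there).
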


\begin{rmk}\l{worse rmk}
Proposition \ref{near prop} with $s=0$ is an analogue of \cite[Theorem 6.7]{J4} (our $\ka$ corresponding to Joyce's $\ka-1$). He estimates $\Im\Om|_N$ in the $L^{\frac{2m}{m+2}}$ norm with respect to the conical metric. The $L^{\frac{2m}{m+2}}$ estimate is crucial to \cite[Theorem 5.3]{J3}; see also Remark \ref{exp}. The computation after \cite[(57)]{J4} shows that the $L^{\frac{2m}{m+2}}$ is good enough only for $m<6,$ which is why the dimension hypothesis is made in \cite{J3,J4}.

These do not matter to us because we do not use \cite[Theorem 5.3]{J3} to produce the second approximate solutions. Also we estimate $\Im\Om|_N$ with respect to the cylindrical metric, and in this sense the estimates of Proposition \ref{near prop} are different from those of \cite[Theorem 6.7]{J4}. 
\end{rmk}

\section{Proof of \eq{near1}--\eq{near3}}\l{Proof of 1--3}

We use the following lemma which we can prove by straightforward computation:
\begin{lem}\l{near lem}
Let $(\Ps^s)_{s\in\R^n}$ be a $C^\iy$ family of $p$-forms on $U_x,$ with $\Ps^s|_0=0.$ Then there exist $\ep,c>0$ independent of $s$ and such that the following holds: let $\xi$ be a closed $1$-form on an open set in $C_x\cap U_x$ with $|\xi|_{C^{3/2}}<\ep$ with respect to the cylinder metric on $C_x;$ then the graph of $\xi$ lies in $U_x,$ and for $s\in\R^n$ with $|s|<\ep$ we have
\e\l{near lem1} |r^{-p}\Ps^s|_{\gr\xi}|_{C^{1/2}}\le c r\text{ at every point of }\gr\xi\e
where $|\,\,|_{C^{1/2}}$ is computed with respect to the cylinder metric under the diffeomorphism $\gr\xi\cong C_x.$ \qed
\end{lem}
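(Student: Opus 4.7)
My plan is to combine (i) a uniform Taylor expansion of the smooth family $\Ps^s$ around $0\in U_x$ with (ii) the rescaling rule for $p$-form norms under the conformal change $g_{\rm cyl}=r^{-2}g_{\rm cone}$.

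First, since $\Ps^s|_0=0$ and $\Ps^s$ depends smoothly on $(s,z)$, Taylor's theorem gives uniform remainder estimates: pointwise $|\Ps^s(z)|_{\rm Eucl}\le C|z|$ on a neighbourhood of $0$, while the Euclidean covariant derivatives $|d^k\Ps^s|$ are uniformly bounded in $z$ for each $k$, with constants independent of $s$ for $|s|<\ep$.

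Next I would use the $C^{3/2}_{\rm cyl}$-smallness of $\xi$ together with the $(0,\iy)$-equivariant Weinstein identification to infer both that $\gr\xi\sb U_x$ and that the graph diffeomorphism $\ph_\xi:C_x\to\gr\xi$ is a $C^{3/2}$-small perturbation of the inclusion $C_x\hookrightarrow\C^m$. Pulling $\Ps^s$ back by $\ph_\xi$, the chain rule expresses $\ph_\xi^*\Ps^s$ in terms of $\Ps^s$ itself and first derivatives of $\xi$, so that the pointwise and first-derivative bounds on $\Ps^s$ translate into a cone-$C^{1/2}$ bound $|\ph_\xi^*\Ps^s|_{C^{1/2}_{\rm cone}}\le C\,r$. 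The $C^{3/2}$ regularity of $\xi$ is exactly what is needed for the chain rule to produce a $C^{1/2}$ bound on the pullback, since one differentiation is absorbed.

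Finally I would apply the conformal rescaling rule $|\om|_{\la^2 g}=\la^{-p}|\om|_g$ with $\la=r^{-1}$, which gives pointwise $|r^{-p}\om|_{\rm cyl}=|\om|_{\rm cone}$ for any $p$-form $\om$. Because the cylinder metric on $C_x$ is invariant under the radial dilations $T_R:z\mapsto Rz$, every cylinder-unit ball is isometric to a fixed model (with $r$ varying only by a bounded factor within it), so the pointwise identity extends to a uniform comparability of cone- and cylinder-$C^{1/2}$ norms between $\om$ and $r^{-p}\om$. Combining, $|r^{-p}\Ps^s|_{\gr\xi}|_{C^{1/2}_{\rm cyl}}\le C\,r$ pointwise, uniformly in $s$, which is the required estimate.

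The main (mild) bookkeeping obstacle is this last step: I must verify carefully that the $r^{1/2}$-scaling implicit in the cylinder-versus-cone H\"older seminorm balances against both the prefactor $r^{-p}$ and the $p$-form scaling, which works out cleanly thanks to the $T_R$-scale-invariance of the cylinder metric.
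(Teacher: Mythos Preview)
Your proposal is correct and aligns with the paper's approach: the paper states only that the lemma ``can be proved by straightforward computation'' and adds a one-line remark that $|\xi|_{C^{3/2}}$ is needed because $\Ps^s|_{\gr\xi}$ is, over each point, a smooth function of $\xi,\nb\xi$ --- exactly your chain-rule observation. Your outline (Taylor expansion of $\Ps^s$ at $0$, pullback via the graph map, and the conformal conversion between cone and cylinder norms using the dilation-invariance of the cylinder metric) is precisely this straightforward computation made explicit.
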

\begin{rmk}
Here $|\xi|_{C^{3/2}}$ is taken into account because $\Ps^s|_{\gr\xi}$ is over a fixed point of $C'$ a smooth function of $\xi,\nb\xi$ whose $C^{\be}$ norms are bounded by $|\xi|_{C^{3/2}}.$
\end{rmk}
\begin{cor}\l{near cor0}
Let $(\Ps^s)_{s\in\R^n}$ be as in Lemma \ref{near lem} and suppose moreover that $\Ps^0=0$ at every point of $U_x.$ Then there exist again $\ep,c>0$ such that the same statment holds with the upper bound $c|s|r$ in place of $cr$ in \eq{near lem1}.
\end{cor}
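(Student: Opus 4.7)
The plan is to reduce the corollary to Lemma \ref{near lem} by exploiting the additional vanishing $\Ps^0\equiv0$ to factor out a power of $|s|$. First I would write, using the fundamental theorem of calculus along the segment from $0$ to $s$ in $\R^n$,
\[
\Ps^s \;=\; \int_0^1 \frac{\d}{\d\ta}\,\Ps^{\ta s}\,\d\ta \;=\; \sum_{i=1}^n s_i\,\Th^s_i,\qquad \Th^s_i:=\int_0^1 (\partial_{s_i}\Ps)^{\ta s}\,\d\ta,
\]
which makes sense because the family $(\Ps^s)$ is smooth in $s$ and $\Ps^0=0$ on all of $U_x.$ Each $\Th^s_i$ is then a $C^\iy$ family of $p$-forms on $U_x$ depending smoothly on $s\in\R^n.$

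Next I would check that each $\Th^s_i$ satisfies the original hypothesis of Lemma \ref{near lem}, namely $\Th^s_i|_0=0$ at the point $0\in U_x.$ This follows because $\Ps^{s}|_{0}=0$ for every $s,$ so differentiating in $s_i$ at the point $0\in U_x$ gives $(\partial_{s_i}\Ps)^{s}|_{0}=0$ for every $s,$ whence $\Th^s_i|_0=0.$

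Applying Lemma \ref{near lem} to each $\Th^s_i$ separately produces constants $\ep_i,c_i>0$ independent of $s$ such that, for $|\xi|_{C^{3/2}}<\ep_i$ and $|s|<\ep_i,$
\[
\bigl|r^{-p}\,\Th^s_i|_{\gr\xi}\bigr|_{C^{1/2}}\;\le\; c_i\,r.
\]
Taking $\ep=\min_i\ep_i$ and summing over $i$ with the triangle inequality and $|s_i|\le|s|,$
\[
\bigl|r^{-p}\,\Ps^s|_{\gr\xi}\bigr|_{C^{1/2}}\;\le\;\sum_{i=1}^n|s_i|\,\bigl|r^{-p}\,\Th^s_i|_{\gr\xi}\bigr|_{C^{1/2}}\;\le\; c\,|s|\,r
\]
with $c:=\sum_i c_i,$ which is the desired estimate. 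There is no real obstacle here: the only thing to verify carefully is that the auxiliary families $\Th^s_i$ inherit the vanishing at $0\in U_x$ from $\Ps^s,$ and this is immediate from differentiating the identity $\Ps^s|_0=0$ in the parameter $s.$
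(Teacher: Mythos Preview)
Your proof is correct and follows essentially the same route as the paper: you factor $\Ps^s=\sum_i s_i\,\Th^s_i$ via Taylor's formula (the paper phrases it this way, you use the equivalent integral form), verify that each $\Th^s_i$ still vanishes at $0\in U_x$, and then apply Lemma \ref{near lem} term by term. Your write-up is in fact more explicit than the paper's, which simply says ``using Taylor's formula, write $\Ps^s=s_1\ps^s_1+\dots+s_n\ps^s_n$; apply Lemma \ref{near lem} to $\ps^s_1,\dots,\ps^s_n$ and the rest is easy.''
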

\begin{proof}
Putting $s=(s_1,\dots,s_n)\in\R^n$ and using Taylor's formula, write $\Ps^s=s_1\ps^s_1+\dots+s_n\ps^n_1.$ We can then apply Lemma \ref{near lem} to $\ps^s_1,\dots,\ps^s_n$ and the rest is easy.
\end{proof}

We prove now the estimates \eq{near1}--\eq{near3}.
\begin{proof}[Proof of \eq{near1}]
Fix $x\in X^\sing.$ Notice that there is on $K_x\sb\C^m$ a $C^\iy$ family $(\Xi^{ts})_{t\in[-1,1],s\in\R^n}$ of $m$-forms defined by saying that for $t\in[-1,1]\-\{0\}$ we have $\Xi^{ts}:=\Im (t_x^{-m}t_x^*\Om^s-\ps^s(x)^m\Om')$ and for $t=0$ we have $\Xi^{0s}:=0.$ This is then smooth with respect to $t$ and there exists a constant $c>0$ independent of $s,t$ and such that for $s,t$ near $(0,0)$ we have
\e\l{n0} |r^{-m}\Xi^{ts}|_{L_x}|_{C^{1/2}}\le c|t|\text{ at every point of }L_x\cap K_x\e
On the other hand, since $L_x\sb(\C^m;\om',J',\Om')$ is a special Lagrangian submanifold it follows that $\Xi^{ts}|_{L_x}=\Im(t_x^{-m}t_x^*\Om^s)|_{L_x}.$ Simple computation shows that $r^{-m}\Im(t_x^{-m}t_x^*\Om^s)|_{L_x}=t_x^*(r^{-m}\Im\Om^s|_{t_xL_x}).$ On the other hand, $t_xL_x=N$ in $t_xK_x.$ Combining these with \eq{n0} we see that
\e\l{n1} |t_x^*(r^{-m}\Im\Om^s|_N)|_{C^{1/2}}\le c|t|\text{ at every point of }t_x^{-1}N\cap K_x.\e 
This rescaled by $t_x$ implies $|r^{-m}\Im\Om^s|_N|_{C^{1/2}}\le c|t|$ at every point of $N\cap t_xK_x.$ But the radius function $r$ is, at every point of $N\cap t_xK_x,$ bounded below by $t$ up to constant. Making $c$ larger if we need, therefore, we have
\e\l{n2} |r^{-m}\Im\Om^s|_N|_{C^{1/2}}\le cr\text{ at every point of }N\cap t_xK_x.\e
We prove that the same estimate holds also on $N\cap B_x(t^\ta)\-t_xK_x.$ Applying Lemma \ref{near lem} to $\Ps^s:=\Im(\Om^s-\ps^s(x)^m\Om')$ we see that making $c$ larger if we need, we have
\e\l{n3} |r^{-m}\Ps^s|_N|_{C^{1/2}}\le cr\text{ at every point of }N\cap B_x(t^\ta)\-t_xK_x.\e
Since $N\cap B_x(t^\ta)=t_xL_x \cap B_x(t^\ta)\sb(\C^m;\om',J',\Om')$ is a special Lagrangian submanifold it follows that $\Ps^s|_N=\Im\Om^s|_N.$ The estimate \eq{n3} implies therefore that $|r^{-m}\Im\Om^s|_N|_{C^{1/2}}\le cr$ at every point of $N\cap B_x(t^\ta)\-t_xK_x,$ completing the proof.
\end{proof} 

\begin{proof}[Proof of \eq{near2}]
If $\xi$ is a $C^1$ small $1$-form on $X$ then we can write $\Im\Om|_{\gr\xi}=:P_X(\xi,\nb\xi){\rm dvol}$ where $P_X$ is a $\xi$-independent smooth function $T^*X\oplus (T^*X)^{\otimes 2}\to\R,$ and dvol is the volume form of the conical metric on $X.$ Recall from \cite[Proposition 6.3]{J2} that
\e\l{n5}
P_X(\xi,\nb\xi)=-\d^*(\ps^m \xi)+O(r^{-2}|\xi|^2)+O(|\nb\xi|^2)
\e
where $|\,\,|,\nb$ and the $O$ terms are computed with respect to the conical metric on $X.$ If we replace the conical metric by the cylindrical metric then $|\xi|$ becomes $r^{-1}|\xi|,$ $|\nb\xi|$ becomes $r^{-2}|\nb\xi|$ and the formula becomes
\e
P_X(\xi,\nb\xi)=-\d^*(\ps^m \xi)+r^{-4}O(|\xi|^2)+r^{-4}O(|\nb\xi|^2)
\e
where $|\,\,|,\nb$ and the $O$ terms are computed now with respect to the cylindrical metric on $X.$ The same result \cite[Proposition 6.3]{J2} computes also the derivatives of \eq{n5} and in particular we can estimate the H\"older norm of $P_X(\xi,\nb\xi).$ If we use again the cylindrical metric then we have
\e\l{n6}
|P_X(\xi,\nb\xi)|_{C^{1/2}}=|\d^*(\ps^m \xi)|_{C^{1/2}}+r^{-4}O(|\xi|^2)+r^{-4}O(|\nb\xi|^2);
\e
that is, these $|\,\,|,\nb$ and $O$ terms are computed with respect to the cylindrical metric on $X.$

Fix $x\in X^\sing.$ The estimate \eq{near2} is concerned with the region $B_x(2t^\ta)\- B_x(t^\ta)$ in which $N\cap B_x(2t^\ta)\- B_x(t^\ta)$ is the graph of a $1$-form $\ze$ on $C_x\cap B_x(2t^\ta)\- B_x(t^\ta)$ such that $\ze-t^2\al$ is an exact $1$-form $\d F.$ The $0$-form $F$ is made from $A_x, t_x^2t_{x*}E_x$ and $t^2 T_x$ using cut-off functions. 

We compute the right-hand side of \eq{n6}. For $\xi=t^2\al$ the $\d^*$ term vanishes and the last two quadratic terms on \eq{n6} contribute $r^{-4}t^4.$ But $r\ge t^\ta$ so this is bounded above by $t^{4-4\tau}\le t^\ka.$ For $\xi=\d A_x,t_x^2t_{x*}\d E_x$ and $t^2\d T_x$ the dominant term on the right-hand side of \eq{n6} is the $\d^*$ term. Since $\d^*$ is computed with respect to the conical metric it follows that there exists $c>0$ independent of $s,t$ and such that
\e |\d^*(\ps^m \xi)|_{C^{1/2}}\le c r^{-2}|\xi|_{C^{3/2}}\text{ at every point of }C_x\cap B_x(2t^\ta)\- B_x(t^\ta)\e
where the H\"older norms are computed with respect to the cylinder metric.  Notice that
\begin{align*}
&\xi= \d A_x=O(r^\mu)\Rightarrow
 r^{-2}|\xi|_{C^{3/2}}=r^{\mu-2}\le 2^{\mu-2} t^{\ta(\mu-2)}\le 2^{\mu-2}t^\ka,\\
&\xi=t_x^2t_{x*}\d E_x=t^{2-\la}O(r^\la)\Rightarrow
r^{-2}|\xi|_{C^{3/2}}=t^{2-\la}r^{\la-2}\le t^{(2-\la)(1-\tau)}\le t^\ka,\\
&\xi=t^2\d T_x=t^2O(r^\de)\Rightarrow
r^{-2}|\xi|_{C^{3/2}}=t^2r^{\de-2}\le t^{2+\tau(\de-2)}\le t^\ka.
\end{align*}
It is easy to show that the cut-off functions used to patch together $A_x, t_x^2t_{x*}E_x$ and $t^2 T_x$ have only negligible contributions, so the $1$-form $\d F_x$ contributes $t^\ka$ up to constant. We have thus 
\e\l{n7}|P_X(\ze,\nb\ze)|_{C^{1/2}}\le c t^\ka\text{ at every point of }B_x(2t^\ta)\-B_x(t^\ta)\e
after we make $c$ larger if we need. 

Since $r^{-m}\Im\Om|_N=P_X(\ze,\nb\ze)r^{-m}{\rm dvol}$ and since $r^{-m}{\rm dvol}$ is the volume form of the cylinder metric it follows that 
\e\l{n8} |r^{-m}\Im\Om|_N|_{C^{1/2}}=|P_X(\ze,\nb\ze)r^{-m}{\rm dvol}|_{C^{1/2}}=|P_X(\ze,\nb\ze)|_{C^{1/2}}.\e
Applying Corollary \ref{near cor0} to $\Im(\Om^s-\Om)$ we get 
\e\l{n9} |r^{-m}\Im(\Om^s-\Om)|_N|_{C^{1/2}}\le  |s|r\text{ at every point of }B_x(2t^\ta)\-B_x(t^\ta).\e
Combining \eq{n7}--\eq{n9} we get \eq{near2}.
\end{proof}

\begin{proof}[Proof of \eq{near3}]
The same computation as in \eq{n8} shows that
\e\l{n10} \ts |r^{-m}\Im\Om|_N|_{C^{1/2}}=|P_X(t^2\al,t^2\nb\al)|\text{ at every point of }N\- \bigsqcup_{x\in X^\sing}B_x(2t^\ta).\e
Applying \eq{n6} to $\xi=t^2\al$ we get
\e\l{n11}
|P_X(t^2\al,t^2\nb\al)|=r^{-4}O(t^4|\al|^2)+r^{-4}O(t^4|\nb\al|^2)=t^4O(r^{-4}).
\e
On the other hand, we show now that 
\e\l{n12}
\ts|r^{-m}\Im(\Om^s-\Om)|_N|_{C^{1/2}}\le  |s|r\text{ at every point of }N\-\bigsqcup_{x\in X^\sing} B_x(2t^\ta).
\e
As the proof of \eq{n9} is valid also on $N\cap \bigsqcup_{x\in X^\sing}(U_x\- B_x(2t^\ta)),$ the estimate \eq{n12} does hold on this region. Since $N\- \bigsqcup_{x\in X^\sing}U_x$ is compact and $t$-independent it follows that
\e\l{n13} 
\ts|r^{-m}\Im(\Om^s-\Om^0)|_N|_{C^{1/2}}\le  |s|\text{ at every point of }N\- \bigsqcup_{x\in X^\sing}U_x.
\e
But $r:X\to(0,\iy)$ is bounded below on this region, so \eq{n13} implies that \eq{n12} holds on the same region. Thus \eq{n12} holds everywhere on $N\-\bigsqcup_{x\in X^\sing} B_x(2t^\ta).$ This combined with \eq{n10} and \eq{n11} implies \eq{near3}.
\end{proof}

\section{Quadratic Estimates}\l{pert2}
Since $\ka>2-2\ta$ and $\ta\in(\frac23,1)$ it follows that
\e\l{nu'} \nu':=\max\left\{\frac{3\ta-2}{\ta-2},\frac{2-2\ta-\ka}{2-\ta},\frac{2\ta-2}{2-\ta}\right\}<0.\e 
Fix $\nu\in(\nu',0).$ Making $\nu$ larger if we need, suppose also that $2-2\nu<\th$ where $\th$ is the constant $>2$ given in Theorem \ref{1}. Give $C^{5/2}(N,\R)$ a weighted H\"older norm $|\bu|_\nu$ with weight $r^{-\nu};$ that is, for $u\in C^{5/2}(N,\R)$ define $|u|_\nu:=|r^{-\nu}u|_{C^{5/2}}$ where the right-hand side is computed with respect to the cylindrical metric on $N.$ In the same way, give $C^{1/2}(N,\R)$ the weighted H\"older norm $|\bu|_{\nu-2}$ with weight $r^{-\nu+2}.$ We prove then a corollary of Proposition \ref{near prop}.
\begin{cor}\l{near cor}
There exists $c>0$ independent of $s,t$ and such that
\e|r^{-n} \Im\Om|_N|_{\nu-2}\le c|s|+t^\th\text{ at every point of $N.$}\e
\end{cor}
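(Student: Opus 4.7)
The plan is to split $N$ into the three regions appearing in Proposition \ref{near prop}, multiply the pointwise bounds there by the weight $r^{2-\nu}$ built into $|\cdot|_{\nu-2}$, and take the supremum over $N$.

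On $\bigsqcup_{x\in X^\sing}(N\cap B_x(t^\ta))$, where $r\le t^\ta$, the estimate \eq{near1} multiplied by $r^{2-\nu}$ gives $cr^{3-\nu}\le c t^{\ta(3-\nu)}$. On the transition annulus $\bigsqcup_{x\in X^\sing}(B_x(2t^\ta)\-B_x(t^\ta))$, where $r\sim t^\ta$, the estimate \eq{near2} gives
\[
c|s|r^{3-\nu}+ct^\ka r^{2-\nu}\le c|s|t^{\ta(3-\nu)}+c t^{\ka+\ta(2-\nu)}.
\]
On $N\-\bigsqcup_{x\in X^\sing} B_x(2t^\ta)$, where $r\ge 2t^\ta$ and $r$ is bounded above by a fixed constant, the estimate \eq{near3} gives
\[
c|s|r^{3-\nu}+c t^4 r^{-2-\nu}\le C|s|+c t^{4-\ta(2+\nu)},
\]
the last term because $-2-\nu<0$ makes $r^{-2-\nu}$ maximal at the smallest value $r\sim t^\ta$.

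Three $|s|$-free $t$-exponents emerge: $\ta(3-\nu)$, $\ka+\ta(2-\nu)$, and $4-\ta(2+\nu)$. The three terms in \eq{nu'} are precisely the values of $\nu$ at which each such exponent equals $2-2\nu$, so $\nu>\nu'$ strictly places all three above $2-2\nu$. Combined with $2-2\nu<\th$, the choice of $\nu$ drives each exponent past $\th$, so each $t$-contribution is bounded by $ct^\th$. The $|s|$-dependent summands absorb into $c|s|$, since $t^{\ta(3-\nu)}$ is uniformly small and $r^{3-\nu}$ is bounded above on the outer region. Assembling the three regional estimates yields $|r^{-m}\Im\Om|_N|_{\nu-2}\le c|s|+t^\th$.

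The main care needed is in verifying that each of the three exponents $\ta(3-\nu)$, $\ka+\ta(2-\nu)$, $4-\ta(2+\nu)$ does exceed $\th$ under the constraints $\nu\in(\nu',0)$ and $2-2\nu<\th$; this is exactly the purpose of the particular form of $\nu'$ in \eq{nu'}. Beyond this bookkeeping, multiplying the Hölder bounds of Proposition \ref{near prop} by the weight $r^{2-\nu}$ is routine once one works throughout in the cylinder metric, where $r^{2-\nu}$ behaves like a locally almost-constant factor.
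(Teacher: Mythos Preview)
Your regional decomposition and the resulting $t$-exponents $\ta(3-\nu)$, $\ka+\ta(2-\nu)$, $4-\ta(2+\nu)$ are exactly those of the paper's proof, and your verification that the three entries of \eqref{nu'} are the values of $\nu$ at which these exponents equal $2-2\nu$, so that $\nu>\nu'$ forces all three strictly above $2-2\nu$, is correct and is precisely where the paper's argument terminates.

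The genuine gap is the next sentence: ``Combined with $2-2\nu<\th$, the choice of $\nu$ drives each exponent past $\th$.'' From $e>2-2\nu$ and $\th>2-2\nu$ you cannot conclude $e>\th$; these two inequalities only place $e$ and $\th$ on the same side of $2-2\nu$ and say nothing about their relative size. Concretely, the three exponents depend only on $\ta,\ka,\nu$ and not on $\th$; since $\nu$ ranges over the fixed interval $(\nu',0)$ they are bounded above by constants independent of $\th$ (for instance $\ta(3-\nu)<\ta(3-\nu')$), and once $\th$ exceeds those constants the implication fails outright. The paper itself does not make the step you attempt: its proof stops at ``the respective powers of $t$ in the last terms of \eqref{near4}--\eqref{near6} are all $>2-2\nu$,'' so what is actually established---by you and by the paper alike---is a bound of the form $c|s|+ct^{2-2\nu}$. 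Whether the $t^\th$ in the displayed statement is a typo for $t^{2-2\nu}$ or reflects an implicit constraint on $\th$ is a separate issue, but your final inference is not a valid way to bridge that gap.
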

\begin{proof}
From \eq{near1}--\eq{near3} we get
\begin{align}
\l{near4}
&|r^{-n} \Im\Om|_N|_{\nu-2}\le cr^{2-\nu}r=r^{3-\nu}\le c t^{\ta(3-\nu)},\\
\l{near5}
&|r^{-n} \Im\Om|_N|_{\nu-2}\le c r^{2-\nu}(|s| t^\ta+t^\ka )\le c |s|+ct^{\ta(2-\nu)+\ka}\text{ and }\\
\l{near6}
&|r^{-n} \Im\Om|_N|_{\nu-2}\le cr^{2-\nu}(|s| r+t^4r^{-4})\le c|s|+ct^{4-\ta(2+\nu)}
\end{align}
respectively. As $\nu\in(\nu',0)$ it follows from \eq{nu'} that the respective powers of $t$ in the last terms of \eq{near4}--\eq{near6} are all $>2-2\nu.$
\end{proof}

We define now a neighbourhood of $N\sb(M,\om^s)$ by gluing together the respective Weinstein neighbourhoods of 
$\bigsqcup_{x\in X^\sing}t(L_x\cap K_x),$ $\bigsqcup_{x\in X^\sing}(C_x\cap U_x\-t_xK_x)$ and $X\-\bigsqcup_{x\in X^\sing} U_x$ in \eq{N}. We call this the {\it Weinstein} neighbourhood of $N\sb(M,\om^s).$ Using it we define the graph of a $C^1$ small $1$-form on $N.$ If $\xi$ is such a $1$-form then we define $P\xi\in C^0(N,\R)$ by $(P\xi){\rm dvol}=\Im\Om^s|_{\gr \xi},$ using the diffeomorphism $\gr\xi\cong N,$ where dvol is the volume form of the conical metric on $N.$ If $\xi$ is moreover $C^{3/2}$ then $P\xi\in C^{1/2}(N,\R).$

Thus $P$ is an operator from an open subset of $C^{3/2}(T^*N)$ to the whole space $C^{1/2}(N,\R).$ This is a smooth map and its linearization at $0\in C^{3/2}(T^*N)$ is well defined. Denote the latter by $\De$ and put $Q:=P-P0-\De.$ We prove then a quadratic estimate for $Q.$ Give $C^{3/2}(T^*N)$ the weighted H\"older norm $|\bu|_\nu$ with weight $r^{-\nu}.$ 
Notice that if $|\xi|_\nu=O(t^{\th+\nu})$ then $|\xi|_{C^{3/2}}=O(t^\th)$ and so the $1$-form $\xi$ and the rescaled $1$-form $t^{-2}\xi$ are both small enough for their graphs to be well defined. 

\begin{prop}\l{quad prop}
For $c_0>0$ independent of $s,t$ there exists $c_1>0$ independent of $s,t$ and such that for $\ph,\ph'\in C^{3/2}(T^*N)$ with $|\ph|_\nu+|\ph'|_\nu\le c_0 t^{\th+\nu}$ we have
\e\l{quad}   |Q \ph'-Q \ph|_{C^{1/2}}\le  c_1r^{-4}|\ph'-\ph|_{C^{3/2}}(|\ph|_{C^{3/2}}+|\ph'|_{C^{3/2}})\text{ at every point of }N\e
where $r$ is the radius function on $N.$
\end{prop}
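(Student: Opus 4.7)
The plan is to treat $P$ locally as a pointwise smooth map $P\xi(y) = F(y, \xi(y), \nabla\xi(y))$ with $F$ defined on a uniform neighbourhood of the zero section in $T^*N \oplus (T^*N)^{\otimes 2}$, and to read off the quadratic estimate from Taylor's theorem applied to $F$ in its $(\xi, \nabla\xi)$-arguments. The weight $r^{-4}$ appears automatically under the conformal rescaling $g_{\mathrm{cyl}} = r^{-2} g_{\mathrm{con}}$: any pointwise quantity of the form $O(|\xi|^2_{\mathrm{con}} + |\nabla\xi|^2_{\mathrm{con}})$ is the same as $r^{-4} \cdot O(|\xi|^2_{\mathrm{cyl}} + |\nabla\xi|^2_{\mathrm{cyl}})$. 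The substantive content of the proof is therefore the uniformity of the $C^k$ bounds on $F$, both in $(s,t) \in \cG$ and in the base point $y \in N$.

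To get such uniformity I would decompose $N$ into the rescaled-model region $N_1 := N \cap \bigsqcup_{x \in X^\sing} B_x(2t^\ta)$ and the near-smooth region $N_2 := N \- \bigsqcup_{x \in X^\sing} B_x(t^\ta)$, using different trivializations of the Weinstein neighbourhood on each. On $N_1$ I pull back along the rescaling $z \mapsto t_x z$: the Calabi--Yau data $t_x^{-2} t_x^* \om^s$ and $t_x^{-m} t_x^* \Om^s$ extend smoothly up to $(s,t) = (0,0)$ with limits $\om'$ and $\ps^0(x)^m \Om'$, and the Weinstein neighbourhood from Definition \ref{dfn: L_x} (extended by the cone outside $K_x$) trivializes the fibre coordinates and exhibits $F$ as a smooth function of $(y, \xi, \nabla\xi)$ whose derivatives of all orders are uniformly bounded in $(s,t)$. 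On $N_2$ I use the Weinstein neighbourhood of $X$ from Definition \ref{dfn: Wein X} and apply \cite[Proposition 6.3]{J2}, which gives
\[
F(y, \xi, \nabla\xi) = -\d^*((\ps^s)^m \xi) + O(r^{-2} |\xi|^2_{\mathrm{con}}) + O(|\nabla\xi|^2_{\mathrm{con}})
\]
together with the corresponding bounds on all higher $(\xi, \nabla\xi)$-derivatives. On the overlap $B_x(2t^\ta) \- B_x(t^\ta)$ the two trivializations are identified by a diffeomorphism with $C^k$ norms uniformly bounded (in the cylindrical metric) in $(s,t)$, so the two descriptions glue without loss.

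Given $F$ with such uniform bounds, \eq{quad} follows from the mean-value identity
\[
Q\phi'(y) - Q\phi(y) = \int_0^1 \bigl[ D_{(\xi, \nabla\xi)} F(y, \phi_\si, \nabla\phi_\si) - D_{(\xi, \nabla\xi)} F(y, 0, 0) \bigr] \cdot (\phi' - \phi, \nabla\phi' - \nabla\phi)(y) \, d\si,
\]
where $\phi_\si := \phi + \si(\phi' - \phi)$. The hypothesis $|\phi|_\nu + |\phi'|_\nu \le c_0 t^{\th+\nu}$ combined with the choice $\th > 2 - 2\nu$ makes $|\phi_\si|_{C^{3/2}}$ at each point of $N$ a uniformly small positive power of $t$, so $(\phi_\si(y), \nabla\phi_\si(y))$ remains in the domain of $F$. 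Smoothness of $F$ then bounds the integrand in the conical $C^{1/2}$ norm by a constant multiple of $(|\phi|_{C^{3/2}} + |\phi'|_{C^{3/2}}) \cdot |\phi' - \phi|_{C^{3/2}}$, and passing to cylindrical norms contributes the factor $r^{-4}$. The main obstacle I expect is the book-keeping of uniform constants across the transition annulus $B_x(2t^\ta) \- B_x(t^\ta)$, where the two local descriptions of $F$ must be reconciled; once that is handled the rest of the argument is formal Taylor expansion.
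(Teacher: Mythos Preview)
Your proposal is correct and follows essentially the same approach as the paper: write $P\phi$ as a fibrewise smooth function of $(\phi,\nabla\phi)$ on the bundle $T^*N\oplus(T^*N)^{\otimes2}$, apply a Taylor-type remainder formula, and observe that the second derivative of this function carries a factor $r^{-4}$ when measured in the cylindrical metric (the paper does this by referring back to the computation of \eq{n6}, which is exactly your conformal-rescaling observation). The only cosmetic difference is that the paper writes the remainder as a double integral involving $\partial^2P$ directly, whereas you use a single integral of the difference of first derivatives; and you spell out the region decomposition for uniformity of constants, which the paper handles by the phrase ``in a way similar to proving \eq{n6}.''
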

\begin{proof}
Put $V:=T^*N\op T^*N^{\ot2},$ a vector bundle over $N.$ Re-write $P\ph$ as $P(\ph,\nb \ph)$ where the latter $P$ is a $\ph$-independent smooth function $V\to\R.$ Denote then by $\partial P:V^*\ot V\to\R$ the partial derivative of $P$ in the fibres of $V$ (for which we do not have to choose a connection on $V$). Define also the second derivative $\partial^2P:V^*\otimes V^*\otimes V\to\R.$ For $a\in\R$ put $\ph^a:=\ph+a(\ph'-\ph).$ Put also $\Ph^a:=(\ph^a,\nb \ph^a),$ which is a section of $V.$ Put $\Ph=\Ph^0$ and $\Ph':=\Ph^1.$ Taylor's theorem implies then
\e\l{quad1} Q \ph'-Q \ph= (\Ph'-\Ph)\mathbin{\lrcorner}\int_0^1\int_0^1\Ph^a\mathbin{\lrcorner}\partial^2 P(ba \Ph^a) \d a \d b.\e
Taking the $C^{1/2}$ norms we get
\e\l{quad2}|Q \ph'-Q \ph|_{C^{1/2}}\le |\Ph'-\Ph|_{C^{1/2}}\int_0^1\int_0^1|\Ph^a|_{C^{1/2}} |\partial^2 P(ba \Ph^a)|_{C^{1/2}} \d a \d b.\e
On the other hand,
\e |\Ph'-\Ph|_{C^{1/2}}\le |\ph'-\ph|_{C^{1/2}}+|\nb\ph'-\nb\ph|_{C^{1/2}}\le |\ph'-\ph|_{C^{3/2}}\e
and we can compute $|\Ph^a|_{C^{1/2}}$ in the same way. In a way similar to proving \eq{n6} we can show that $|\partial^2P(ba\Ph^a)|_{C^{1/2}}= O(r^{-4}).$ So \eq{quad2} implies \eq{quad}.
\end{proof}

Passing to the weighted norms we get
\begin{cor}\l{cor: quad}
For $c_0>0$ independent of $s,t$ there exists $c_1>0$ independent of $s,t$ and such that for $\ph,\ph'\in C^{3/2}(T^*N)$ with $|\ph|_\nu+|\ph'|_\nu\le c_0 t^{\th+\nu}$ we have
\e   |Q \ph'-Q \ph|_{\nu-2}\le c_1 t^{\nu-2} |\ph'-\ph|_\nu (|\ph|_\nu+|\ph'|_\nu).\e
\end{cor}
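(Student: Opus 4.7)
The plan is to convert the pointwise bound of Proposition \ref{quad prop} into the global weighted estimate by multiplying both sides by the weight $r^{2-\nu},$ controlling the $C^{3/2}$ factors on the right-hand side by the weighted norm $|\bu|_\nu,$ and finally taking a supremum over $N,$ exploiting that $r\ges t$ everywhere on $N.$

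First, I would unfold the definitions of the weighted norms so that $|u|_{\nu-2}=\sup_N r^{2-\nu}|u|_{C^{1/2}}$ and $|\ph|_\nu=\sup_N r^{-\nu}|\ph|_{C^{3/2}}.$ Multiplying \eq{quad} by $r^{2-\nu}$ gives the pointwise inequality
\[
r^{2-\nu}|Q\ph'-Q\ph|_{C^{1/2}}\le c_1 r^{-2-\nu}|\ph'-\ph|_{C^{3/2}}\bigl(|\ph|_{C^{3/2}}+|\ph'|_{C^{3/2}}\bigr).
\]

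Second, I would convert the pointwise $C^{3/2}$ norms on the right to the global weighted norms. The cylindrical metric on $N$ has bounded geometry, and on any cylindrical unit ball around $x\in N$ the radius function $r$ is comparable to $r(x);$ hence $r^\nu$ has $C^{3/2}$ norm $O(r(x)^\nu)$ on such a ball. Writing $\ph=r^\nu\cdot(r^{-\nu}\ph)$ and applying the product rule produces the pointwise bound $|\ph|_{C^{3/2}}\le c\, r^\nu|\ph|_\nu,$ and likewise for $\ph'-\ph$ and $\ph'.$ Substituting these yields, pointwise on $N,$
\[
r^{2-\nu}|Q\ph'-Q\ph|_{C^{1/2}}\le c'\, r^{\nu-2}|\ph'-\ph|_\nu\bigl(|\ph|_\nu+|\ph'|_\nu\bigr).
\]

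Third, I would take the supremum over $N.$ Since $\nu<0$ we have $\nu-2<0,$ so $r^{\nu-2}$ is maximised where $r$ is smallest. On $N$ the minimum of $r$ is realised on the rescaled pieces $t_x(L_x\cap K_x),$ where by the construction in Definition \ref{dfn: N} one has $r\ges t_x\ges t,$ while on the remaining pieces of \eq{N} the function $r$ is bounded below by a positive constant independent of $s,t.$ Consequently $\sup_N r^{\nu-2}\le c''t^{\nu-2},$ giving the claimed estimate.

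The main obstacle I foresee is the second step, which requires the comparison $|\ph|_{C^{3/2}}\les r^\nu|\ph|_\nu$ between the pointwise H\"older norm and the global weighted norm; but this is routine from the bounded geometry of the cylindrical metric on $N$ together with the product rule applied to $\ph=r^\nu\cdot r^{-\nu}\ph.$ The hypothesis $|\ph|_\nu+|\ph'|_\nu\le c_0 t^{\th+\nu}$ enters only in order to legitimately invoke Proposition \ref{quad prop}; no further use of it is needed.
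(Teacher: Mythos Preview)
Your proposal is correct and follows essentially the same route as the paper. The paper's proof is just the one-line power count $r^{-4}\cdot r^\nu\cdot r^\nu\cdot r^{2-\nu}=r^{\nu-2}$ followed by $r\ges t\Rightarrow r^{\nu-2}\les t^{\nu-2}$; your three steps unpack exactly this computation, with the extra (but harmless) care of justifying $|\ph|_{C^{3/2}}\les r^\nu|\ph|_\nu$ via bounded geometry of the cylindrical metric.
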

\begin{proof}
After we pass to the weighted norms, the factor $r^{-4}$ in \eq{quad} becomes $r^{-4}r^\nu r^\nu r^{-\nu+2}=r^{\nu-2}.$ But the radius function $r:N\to(0,\iy)$ is bounded below by $t$ up to positive constant, so we have the factor $t^{\nu-2}$ as above.
\end{proof}

\section{Uniform Invertibility}\l{pert3}
For each connected component $Y\sb X$ choose a $C^\iy$ function $E_Y:X\to\R$ supported in $Y\-\bigsqcup_{x\in X^\sing}U_x$ and with $\int_Y E_Y{\rm dvol}= 1$ where dvol is the volume form of the conical metric on $Y.$ Regard this as an $\R$-linear map $\R\to C^{1/2}(N,\R)$ which maps $1$ to $E_Y.$ Varying $Y\in\pi_0X$ we get a basis of the $\R$-vector space $H^0(X,\R)$ and the family $(E_Y)_{Y\in \pi_0X}$ defines then an $\R$-linear map $H^0(X,\R)\to C^{1/2}(N,\R)$ which we denote by $E.$ Consider now the operator
\e\l{unif0} \De\op E: C^{5/2}(N,\R)\op H^0(X,\R)\to C^{1/2}(N,\R).\e
The $E$ factor compensates for the one-dimensional cokernel of $\De: C^{5/2}(N,\R)\to C^{1/2}(N,\R)$ so that $\De\oplus E$ is surjective. We prove
\begin{prop}\l{unif prop}
There exists an $(s,t)$ uniformly bounded right inverse to \eq{unif0} with respect to the weighted norms on the H\"older spaces.
\end{prop}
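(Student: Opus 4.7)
The plan is to argue by contradiction and a two-scale rescaling, in the symmetric spirit of \cite{I2,P}. Since $\De\op E$ is surjective for every fixed $(s,t)\in\cG,$ it suffices to produce a uniform a priori estimate $|u|_\nu+|a|\le c\,|\De u+Ea|_{\nu-2}$ (for $(u,a)$ in a fixed complement of the finite-dimensional kernel) with $c$ independent of $(s,t).$ If this fails, one obtains sequences $(s_k,t_k)\to(0,0)$ and $u_k\in C^{5/2}(N^{s_kt_k}),$ $a_k\in H^0(X,\R)$ with $|u_k|_\nu+|a_k|=1$ and $|\De_ku_k+Ea_k|_{\nu-2}\to0,$ where $\De_k$ denotes the Laplacian of the conical metric on $N^{s_kt_k}.$

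I would next extract limits in two scales. On every compact $K\sb X\-X^\sing,$ the induced conical metrics on $N^{s_kt_k}$ converge smoothly to that of $X,$ so uniform Schauder estimates yield a subsequential limit $u_k\to u_\iy$ in $C^{5/2}_{\rm loc}(X\-X^\sing)$ satisfying $\De_Xu_\iy+Ea_\iy=0,$ with $a_\iy=\lim a_k,$ and the weighted bound forces $u_\iy=O(r^\nu)$ at each singular point. For each $x\in X^\sing$ I would simultaneously rescale by $t_k^{-1}$ and set $\wt u_k(y):=t_k^{-\nu}u_k(t_ky)$ on $L_x\cap K_x;$ since $E_Y$ is supported away from the singular points, the rescaled equation is $\De_{L_x}\wt u_k=O(t_k^{2-\nu}(|s_k|r+t_k^\ka))\to0,$ and a subsequence converges in $C^{5/2}_{\rm loc}$ to $v_\iy$ with $\De_{L_x}v_\iy=0$ and $v_\iy=O(r^\nu)$ at infinity.

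By the choice of $\nu\in(\nu',0)$ in \eq{nu'}, the weight $\nu$ avoids the indicial exponents of the Laplacian on each cone $C_x;$ since $\nu<0,$ the weighted Laplacian on each $L_x$ has trivial kernel (a harmonic function decaying to $0$ at infinity vanishes by the maximum principle), so $v_\iy=0,$ and likewise $\De_X\op E$ is injective on the corresponding weighted space on $X$ for a non-indicial $\nu<0,$ giving $(u_\iy,a_\iy)=0.$ To close the contradiction with $|u_k|_\nu+|a_k|=1,$ I would pick $p_k\in N$ at which $r(p_k)^{-\nu}|u_k(p_k)|$ attains at least $\ha$ of $|u_k|_\nu;$ after passing to a subsequence, $p_k$ sits either in a compact subset of $X\-X^\sing,$ or in $t_kK_x$ for some $x,$ or in the transition annulus $B_x(2t_k^\ta)\-B_x(t_k^\ta).$ In each case the corresponding rescaled limit achieves a weighted absolute value $\ge\ha$ at the limit point, contradicting the vanishing of $u_\iy$ and $v_\iy.$

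The main obstacle I anticipate is the transition annulus: the rescaling is not canonical there, and one must rule out a bubble forming at an intermediate scale $t_k\ll\rho_k\ll t_k^\ta.$ The inequality $\nu>\nu'$ in \eq{nu'} is precisely what prevents this, by keeping the weight non-critical throughout the range of scales appearing in the construction of $N$ in \eq{N}. A secondary technical point is verifying that the rescaled metrics converge smoothly to the model cone metric on this annulus, which follows from the uniform estimates on $\chi_x,\et_x$ and $\al$ already proved in \S\ref{Proof of 1--3}.
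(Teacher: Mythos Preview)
Your approach is genuinely different from the paper's. The paper constructs the right inverse explicitly by a parametrix argument in the style of \cite[\S7.2]{DK}: it introduces cut-off functions $F_L,F_X,G_L,G_X$ (Definition~\ref{dfn: cut-off}), defines model operators $D_L$ on $L=\bigsqcup_{x\in X^\sing} L_x$ and $D_Y\oplus E_Y$ on each component $Y\sb X$ that agree with $\De$ on the supports of $G_L,G_Y$ and are uniform perturbations of the $(s,t)$-independent operators $\De_L$ and $\De_Y\oplus E_Y$ (whose invertibility comes from \cite{Marsh} and \cite{J1}), and then sets $S\oplus\varpi:=G_LD_L^{-1}F_L+\sum_Y G_Y(R_Y\oplus\pi_Y)F_Y$. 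The point is that $(\De\oplus E)(S\oplus\varpi)-\id$ reduces to the commutators $[D_L,G_L]$ and $[D_Y,G_Y]$, whose operator norms are $O((-\log t)^{-1})$ by the logarithmic cut-off trick. This yields the right inverse directly, with no appeal to compactness and no need to analyse the kernel.

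Your compactness-and-blow-up strategy is a legitimate alternative and is indeed closer to how \cite{P} proceeds, but two steps in your outline do not hold as stated. First, $\De_X\oplus E$ is \emph{not} injective on $C^{5/2}_\nu(X)\oplus H^0(X,\R)$ for $\nu\in(2-m,0)$: locally constant functions lie in $C^{5/2}_\nu(X)$ (because $r^{-\nu}\to0$ as $r\to0$), so any pair $(u_\iy,a_\iy)$ with $u_\iy$ locally constant and $a_\iy=0$ is in the kernel. You do restrict at the outset to a complement of $\ker(\De\oplus E)$ on $N$, but that kernel depends on $(s,t)$ and you have not explained why the complement condition survives the limit to $X$; without this the $X$-scale step does not force $(u_\iy,a_\iy)=0$. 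Second, the inequality $\nu>\nu'$ from \eq{nu'} has nothing to do with the neck: \eq{nu'} is designed solely so that the error estimate of Corollary~\ref{near cor} holds. What actually kills a nontrivial limit at an intermediate scale $t_k\ll\rho_k\ll1$ is that the cone Laplacian on each $C_x$ has no indicial roots in the open interval $(2-m,0)$, so a harmonic function on $C_x$ bounded by $r^\nu$ both as $r\to0$ and as $r\to\iy$ must vanish. Both gaps are repairable with more care, but the paper's parametrix route sidesteps them entirely.
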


The idea of the proof is that we can decompose the operator \eq{unif0} into two pieces. One depends only on $(L_x)_{x\in X^\sing}$ and the other only on $X.$ Both are uniformly invertible, from which we deduce that the original operator is invertible. The heart of the proof is therefore to make a reasonable definition of the decomposition. This is rather long but more or less straightforward. We do it following \cite[\S7.2]{DK} in outline, although the background geometry is different. We begin by introducing the cut-off functions with which we will glue together the operators we have defined.
\begin{dfn}\l{dfn: cut-off}
Let $F_X:N\to[0,1]$ and $F_L:N\to[0,1]$ be a partition of unity on $N$ subordinate to $N\-\bigsqcup_{x\in X^\sing} B_x(t^\ta)$ and $N\cap \bigsqcup_{x\in X^\sing} B_x(2t^\ta),$ $F_X$ supported on the former and $F_L$ on the latter. For each connected component $Y\sb X$ put $F_Y:=F_X|_Y.$

Choose a constant $\rho<\ta$ independent of $s,t.$ Put $L:= \bigsqcup_{x\in X^\sing}L_x$ and let $G_L:L\to[0,1]$ be a cut-off function supported on $L\cap\bigsqcup_{x\in X^\sing} B_x(t^\rho),$ with $G_L=1$ on $L_x\cap \bigsqcup_{x\in X^\sing} B_x(2t^\ta)$ and satisfying the following condition: there exists $c>0$ independent of $s,t$ and such that at every point of $L$ we have $|G_L|_{C^{5/2}}\le c(-\log t)^{-1}$ where the H\"older norm is computed with respect to the cylindrical metric on $L.$ 

We do something similar on $X.$ Choose a constant $\si>\ta$ independent of $s,t.$ Let $G_X:X\to[0,1]$ be a cut-off function supported on $X\-\bigsqcup_{x\in X^\sing} B_x(t^\si),$ with $G_X=1$ on $X\-\bigsqcup_{x\in X^\sing} B_x(t^\ta)$ and satisfying the following condition: there exists $c>0$ independent of $s,t$ and such that at every point of $X$ we have $|G_X|_{C^{5/2}}\le c(-\log t)^{-1}$ where the H\"older norm is computed with respect to the cylindrical metric on $X.$ For each connected component $Y\sb X$ put $G_Y:=G_X|_Y.$
\end{dfn}

We define two operators $\De_L,D_L:C^{5/2}_\nu(L,\R)\to C^{1/2}_{\nu-2}(L,\R).$
\begin{dfn}\l{dfn: D1}
Denote by $g'$ the Euclidean metric of $\C^m,$ and by the same $g'$ the induced metric on $L=\bigsqcup_{x\in X^\sing}L_x\sb \bigsqcup_{x\in X^\sing}\C^m.$ Define $\De_L:C^{5/2}_\nu(L,\R)\to C^{1/2}_{\nu-2}(L,\R)$ by $\De_L:=-\d^{*\prime}\d$ where $\d^{*\prime}$ is the formal adjoint of $\d$ computed with respect to $g'.$ This is a bounded operator with respect to the weighted H\"older norms. It is invertible, by \cite[Theorem 6.2.15]{Marsh}. 

We choose on $L$ a Riemannian metric $g^{st}$ depending smoothly on $s,t$ and such that the restriction to $L_x\-B_x(t_x^{-1}t^\rho)$ of $g^{st}$ agrees with $g',$ and the restriction to each $L_x\cap B_x(2t_x^{-1}t^\ta)$ of $g^{st}$ agrees with the metric induced by the embedding $L_x\cap B_x(2t_x^{-1}t^\ta)\cong t_xL_x\cap B_x(2t^\ta)\sb U_x$ where $U_x\sb M$ is given the K\"ahler metric corresponding to $(\om^s,J^s).$ Choose a smooth function $\ps^{st}:L\to(0,\iy)$ depending smoothly on $s,t,$ with $\ps^{st}=1$ on each $L_x\-B_x(t_x^{-1}t^\rho),$ and with $\ps^{st}=t_x^*\ps^s$ on each $L_x\cap B_x(2t_x^{-1}t^\ta).$ Define $D_L:C^{5/2}_\nu(L,\R)\to C^{1/2}_{\nu-2}(L,\R)$ by $D_L:=-\d^{*st}[(\ps^{st})^m\d]$ where $\d^{*st}$ is the formal adjoint of $\d$ computed with respect to $g^{st}.$
\end{dfn}
We show that $D_L$ is uniformly invertible.
\begin{lem}
$D_L:C^{5/2}_\nu(L,\R)\to C^{1/2}_{\nu-2}(L,\R)$ is invertible and its inverse is bounded uniformly with respect $s,t.$
\end{lem}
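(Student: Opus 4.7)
The plan is to treat $D_L$ as a small perturbation of the invertible operator $\De_L$ in the operator norm. Writing $D_L = \De_L + R^{st},$ one sees from the construction that $R^{st}$ is supported in $\bigsqcup_{x \in X^\sing}(L_x \cap B_x(t_x^{-1} t^\rho))$ since both $g^{st} = g'$ and $\ps^{st} = 1$ off this set. On the support, $R^{st}$ is a second-order differential operator whose coefficients are built algebraically from $g^{st} - g',$ $(\ps^{st})^m - 1,$ and their first derivatives.

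The key step will be to establish the pointwise H\"older estimate
\[
|g^{st} - g'|_{C^{1/2}} + |(\ps^{st})^m - 1|_{C^{1/2}} = O(t^\rho + |s|),
\]
with norms computed in the cylindrical metric on $L.$ For this I would use $\Up_x^{s*}\om^s = \om'$ together with the smooth dependence of $(\om^s, J^s)$ on $s$ and the normalization $g^{st}|_0 = g'$ at $s = 0.$ At a point $z \in L_x$ in the support the corresponding ambient point is $y = t_x z$ with $|y| \le t^\rho,$ so Taylor expansion around $(y,s) = (0,0)$ yields the estimate on the inner region $L \cap \bigsqcup_x B_x(2 t_x^{-1} t^\ta).$ On the interpolation annulus $B_x(t_x^{-1} t^\rho) \setminus B_x(2 t_x^{-1} t^\ta)$ the cut-off functions can be chosen so that their derivatives, which are of order $t^{1 - \rho},$ combined with the pointwise smallness of the metric difference give the same bound; here the condition $\rho < \ta$ is essential to control the size of the annulus in the cylindrical metric.

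Once the pointwise bound above is in hand, tracking the powers of $r$ in the weighted H\"older norms, together with the fact that $r \le t_x^{-1} t^\rho$ on the support of $R^{st},$ will yield the uniform operator-norm estimate
\[
\|R^{st}\|_{C^{5/2}_\nu(L,\R) \to C^{1/2}_{\nu-2}(L,\R)} \le c(t^\rho + |s|),
\]
which tends to $0$ as $(s,t) \to (0,0).$ Since $\De_L$ is invertible by \cite[Theorem 6.2.15]{Marsh}, the factorisation $D_L = \De_L(I + \De_L^{-1} R^{st})$ is invertible via a Neumann series for $(s,t)$ close enough to $(0,0),$ and
\[
\|D_L^{-1}\| \le \frac{\|\De_L^{-1}\|}{1 - \|\De_L^{-1}\| \cdot \|R^{st}\|}
\]
is then bounded uniformly in $(s,t).$ I expect the main obstacle to be the pointwise H\"older estimate in the interpolation annulus, where one must verify that the derivatives of the cut-off do not spoil the smallness of the metric perturbation; the relation $\rho < \ta,$ together with the explicit scale $t^{1-\rho}$ of the cut-off gradients, is what makes this work.
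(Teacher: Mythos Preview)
Your approach is essentially the paper's: show $D_L-\De_L$ has small operator norm and invert by a Neumann-type argument. Two small corrections are worth noting. First, since the coefficients of $R^{st}$ involve first derivatives of $g^{st}-g'$ (through the Christoffel symbols in $\d^{*st}$), you need the $C^{3/2}$ norm of $g^{st}-g'$ in the cylindrical metric, not just $C^{1/2}$; your own remark about ``their first derivatives'' already points to this. Second, your worry about the interpolation annulus is largely unnecessary, and the claimed cut-off gradient scale $t^{1-\rho}$ is not right in the cylindrical metric. The paper obtains the cleaner uniform bound $|g^{st}-g'|_{C^{3/2}}\le ctr$ on all of $L$ by the same rescaling-and-Taylor argument as in the proof of \eq{near1}; since on the support $r\le t_x^{-1}t^\rho\approx t^{\rho-1}$, this gives $ctr\le 2ct^\rho$. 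The interpolation causes no trouble because you are interpolating between $g'$ and something already $O(tr)$-close to $g'$. With the $r^{-2}$ factor coming from $\d^*$ one then gets $|D_Lu-\De_Lu|_{\nu-2}\le c(|s|+t^\rho)|u|_\nu$, and the rest is exactly your Neumann argument.
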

\begin{proof}
Computation similar to the proof of \eq{near1} shows that there exists $c>0$ independent of $s,t$ and such that 
\e\l{gg} |g^{st}-g'|_{C^{3/2}}\le ctr\le 2ct^\rho\text{ at every point of }L\e
where the $C^{3/2}$ norm is computed with respect to the cylindrical metric on $L.$
Denote by $\nb^{st},\nb'$ the respective Levi-Civita connections of $g^{st},g'.$ The estimate \eq{gg} implies then that there exists  $c>0$ independent of $s,t$ and such that for $\ph\in C^{3/2}(T^*L)$ we have
\e |\nb^{st}\ph-\nb'\ph|_{C^{1/2}}\le ct^\rho|\ph|_{C^{3/2}}\text{ at every point of }L.\e
In general, the operator $\d^*$ of a Riemannian metric $g_{ab}$ is of the form $-g^{ab}\nb_b$ in the index notation. In the current circumstances, as we work with the cylindrical metrics, the $g^{ab}$ term contributes $r^{-2}$ so that
\e\l{r-2} |\d^{*st}\ph-\d^{*\prime}\ph|_{C^{1/2}}\le cr^{-2}t^\rho|\ph|_{C^{3/2}}\text{ at every point of }L.\e
Hence making $c$ larger if we need, we find that for $u\in C^{5/2}(L,\R)$ we have
\e   |\d^{*st}[(\ps^{st})^m\d u]-\d^{*\prime}[(\ps^{st})^m\d u]|_{C^{1/2}}\le cr^{-2}t^\rho|u|_{C^{5/2}}.\e
Since $\ps^{00}=1$ it follows that making $c$ larger if we need, we have
\e   |D_Lu-\De_L u|_{C^{1/2}}\le cr^{-2}(|s|+t^\rho) |u|_{C^{5/2}}.\e
Passing to the weighted norms we get
\e\l{DD}   |D_L u-\De_L u|_{\nu-2}\le c(|s|+t^\rho) |u|_\nu.\e
Recall now from \cite[Theorem 6.2.15]{Marsh} that the operator $\De_L:C^{5/2}_\nu(L,\R)\to C^{1/2}_{\nu-2}(L,\R)$ is invertible. Since $g^{st}$ and $\ps^{st}$ depend smoothly on $s,t$ it follows that the operator $D_L:C^{5/2}_\nu(L,\R)\to C^{1/2}_{\nu-2}(L,\R)$ is bounded uniformly with respect to $s,t$ (where the operator norm is computed with respect to the weighted norms). The estimate \eq{DD} implies therefore that that the operator $D_L\De_L^{-1}-\id:C^{1/2}_{\nu-2}(L,\R)\to C^{1/2}_{\nu-2}(L,\R)$ has operator norm $\le c(|s|+t^\rho)\ll1$ with respect to the $C^{1/2}_{\nu-2}$ norm. Thus $D_L\De_L^{-1}$ is invertible and its inverse is bounded uniformly with respect to $s,t.$ On the other hand, $\De_L$ has nothing to do with $s,t$ and is therefore bounded uniformly with respect to $s,t.$ Accordingly so is $\De_L^{-1}(D_L\De_L^{-1})^{-1},$ which is a right inverse to $D_L.$ Changing the order of $D_L$ and $\De_L^{-1}$ we see also that $\De_L^{-1}D_L$ is invertible and that $D_L$ has a left inverse. The latter must agree with the right inverse, which completes the proof.
\end{proof}

We define two other operators $\De_Y\oplus E_Y$ and $D_Y\oplus E_Y.$
\begin{dfn}\l{dfn: D2}
Recall that $N\-\bigsqcup_{x\in X^\sing} t_xK_x$ lies in the Weinstein neighbourhood of $X\sb(M,\om^s)$ and write this as the graph of a $1$-form $\xi$ on $X\-\bigsqcup_{x\in X^\sing} t_xK_x.$ Introduce on $X$ a $1$-form $\xi''$ which agrees with $\xi$ on $X\-\bigsqcup_{x\in X^\sing} B_x(t^\ta)$ and vanishes on $\bigsqcup_{x\in X^\sing} B_x(2t^\si).$ Fix a connected component $Y\sb X.$ Define $\De_Y:C^{5/2}_\nu(Y,\R)\to C^{1/2}_{\nu-2}(Y,\R)$ by $\De_Y=-\d^*[(\ps^0)^m\d]$ where $\d^*$ is computed with respect to the conical metric on $Y\sb X.$ Since $E_Y:Y\to\R$ is compactly supported it follows that this is an element of the weighted space $C^{1/2}_{\nu-2}(N,\R)$ and defines therefore an $\R$-linear map $\R\to C^{1/2}_{\nu-2}(N,\R)$ which maps $1$ to $E_Y.$ We denote the latter also by $E_Y.$ Combining it with $\De_Y$ we get an operator
\e\l{DE} \De_Y\oplus E_Y: C^{5/2}_\nu(Y,\R)\oplus \R\to C^{1/2}_{\nu-2}(Y,\R).\e
Denote by $g''$ the Riemannian metric on $X$ induced by the embedding $X\cong\gr\xi''\sb M$ where $M$ is given the K\"ahler metric corresponding to $(\om^s,J^s).$ Define $D_Y:C^{5/2}_\nu(Y,\R)\to C^{1/2}_{\nu-2}(Y,\R)$ by $D_Y:=-\d^{*\prime\prime}[(\ps^s)^m\d]$ where $\d^{*\prime\prime}$ is computed with respect to $g''.$ 
\end{dfn}
We show that \eq{DE} has a uniformly bounded right inverse.
\begin{lem}\l{lem: right inverse}
Using the $C^{5/2}_\nu$ norm on $C^{5/2}_\nu(Y,\R)$ and the Euclidean norm on $\R$ give $C^{5/2}_\nu(Y,\R)\oplus \R$ the product norm. The operator \eq{DE} has then a right inverse which is uniformly bounded with respect to $s,t.$
\end{lem}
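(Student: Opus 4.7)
Two observations simplify the problem. First, both $\De_Y$ and the map $E_Y:\R \to C^{1/2}_{\nu-2}(Y,\R)$ are built entirely from the background data $(\om^0,J^0,\Om^0)$ and the fixed bump function $E_Y$, so neither depends on $(s,t)$. Hence ``uniformly bounded in $s,t$'' is equivalent to ``bounded'', and by the open mapping theorem it suffices to prove that $\De_Y \oplus E_Y$ is surjective onto $C^{1/2}_{\nu-2}(Y,\R)$.

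The plan is to invoke the weighted H\"older Fredholm theory for the scalar Laplacian on a manifold with isolated conical singularities, as developed by Lockhart--McOwen and adapted to this geometric setting in \cite{J1, Marsh}. This theory yields that, for $\nu$ outside a discrete set of critical rates determined by the Laplace spectrum on each link $C_x \cap S^{2m-1}$, the operator $\De_Y: C^{5/2}_\nu(Y,\R) \to C^{1/2}_{\nu-2}(Y,\R)$ is Fredholm. The factor $(\ps^0)^m$ does not alter the critical rates, since $\ps^0$ is smooth and positive and its value at each singular point is a positive constant, so the model operator at each $x \in Y^\sing$ is a positive multiple of the scalar Laplacian on the tangent cone $C_x$. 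We may assume (possibly after shrinking $\nu$ toward $0$ within $(\nu',0)$, which is an open condition) that $\nu$ is noncritical and lies in the range where the first critical rate above $\nu$ is $0$; in this range the Fredholm cokernel of $\De_Y$ is one-dimensional.

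To conclude, I would identify the cokernel with the integration functional $h \mapsto \int_Y h\,{\rm dvol}$ (well-defined on $C^{1/2}_{\nu-2}(Y,\R)$ since $\nu > 2 - m$). Integration by parts shows that $\int_Y \De_Y u\,{\rm dvol} = 0$ for every $u \in C^{5/2}_\nu(Y,\R)$: the boundary integrals over the small spheres $\{r = \epsilon\}$ around each $x \in Y^\sing$ decay as $\epsilon^{\nu + m - 2}$, which tends to $0$ by the same weight condition. Hence $\im(\De_Y)$ lies in the kernel of the integration functional, and one-dimensionality of the cokernel forces this inclusion to be an equality. Since $\int_Y E_Y\,{\rm dvol} \neq 0$ by construction, $E_Y \notin \im(\De_Y)$, so $\De_Y \oplus E_Y$ surjects; the open mapping theorem then supplies a bounded right inverse, automatically uniform in $(s,t)$. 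The main obstacle is the Fredholm analysis itself---namely, verifying that the chosen $\nu$ is noncritical and that the cokernel of $\De_Y$ in this range is exactly one-dimensional. This follows from the machinery of \cite{J1, Marsh}, but is where the substance of the argument sits.
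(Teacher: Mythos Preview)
Your argument for the surjectivity of $\De_Y\oplus E_Y$ is sound and is essentially what the paper obtains in one line by citing \cite[Theorems 2.14 and 2.16(b)]{J1} and \cite[Theorem 6.9]{Marsh}. But you have misread the intent of the lemma. The very fact you noticed---that $\De_Y\oplus E_Y$ is built from the background data and has nothing to do with $(s,t)$---should have flagged that the phrase ``uniformly bounded with respect to $s,t$'' is doing real work. The operator the paper actually wants (and the one used downstream in the proof of Proposition~\ref{unif prop}) is $D_Y\oplus E_Y$, where $D_Y=-\d^{*\prime\prime}[(\ps^s)^m\d]$ is computed with respect to the $(s,t)$-dependent metric $g''$ of Definition~\ref{dfn: D2}. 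The label \eq{DE} displays $\De_Y\oplus E_Y$, but the proof and the subsequent application are unambiguously about $D_Y\oplus E_Y$.

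The substance of the paper's proof, which your proposal bypasses entirely, is the perturbation estimate
\[
|D_Yu-\De_Yu|_{\nu-2}\le c(|s|+t^{2-2\si})|u|_\nu,
\]
obtained by comparing the metric $g''$ on $\gr\xi''$ to the conical metric $g$ on $Y$ and tracking the resulting difference in the $\d^*$ operators. Once this is in hand, the paper takes a fixed right inverse $\Th$ to $\De_Y\oplus E_Y$ (your surjectivity gives exactly this), observes that $(D_Y\oplus E_Y)\Th-\id$ has operator norm $\le c(|s|+t^{2-2\si})\ll1$, and concludes that $\Th[(D_Y\oplus E_Y)\Th]^{-1}$ is a uniformly bounded right inverse to $D_Y\oplus E_Y$. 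So your argument supplies the $(s,t)$-independent input $\Th$, but not the lemma itself.
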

\begin{proof}
Denote by $g$ the conical metric on $Y\sb X.$  We show that there exists $c>0$ independent of $s,t$ and for which
\e\l{gg'} |g''-g|_{C^{3/2}}\le ct^{2-2\si}\text{ at every point of }X\e
where the $C^{3/2}$ norm is computed with respect to the cylindrical metric on $X.$ It is clear that this holds on $\bigsqcup_{x\in X^\sing} B_x(t^\si)$ because $g''=g$ there. On $X\-\bigsqcup_{x\in X^\sing}B_x(2t^\ta)$ we have $\xi=t^2\al$ which implies the stronger estimate $|g''-g|_{C^{3/2}}=O(t^2).$ On each $B_x(2t^\ta)\-B_x(2t^\si)$ the $1$-form $\xi$ is made from $\d A_x,t_x^2t_{x*}\d E_x,\d T_x$ and $t^2\al.$ Computation similar to the proof of \eq{near2} shows that the dominant contribution is that of $t^2\al,$ which is $r^{-2}O(t^2),$ including the rescale factor $r^{-2}.$ This with $r\ge t^\si$ implies \eq{gg'}. 

If we denote by $\nb'',\nb$ the respective Levi-Civita connections of $g'',g$ then we find from \eq{gg'} a constant $c>0$ independent of $s,t$ and such that for $\ph\in C^{3/2}(T^*Y)$ we have
\e |\nb''\ph-\nb \ph|_{C^{1/2}}\le ct^{2-2\si}|\ph|_{C^{3/2}}\text{ at every point of }Y.\e
Computing the $\d^*$ operators as in \eq{r-2} we get
\e |\d^{*\prime\prime}\ph-\d^*\ph|_{C^{1/2}}\le cr^{-2}t^{2-2\si}|\ph|_{C^{3/2}}\text{ at every point of }Y.\e
Hence making $c$ larger if we need, we find that for $u\in C^{5/2}(Y,\R)$ we have
\e   |\d^{*\prime\prime}[(\ps^s)^m\d u]-\d^*[(\ps^s)^m\d u]|_{C^{1/2}}\le cr^{-2}t^{2-2\si}|u|_{C^{5/2}}.\e
Since $\ps^0=\ps$ it follows that making $c$ larger if we need, we have
\e   |D_Yu-\De_Yu|_{C^{1/2}}\le cr^{-2}(|s|+t^{2-2\si}) |u|_{C^{5/2}}.\e
Passing to the weighted norms we get
$|D_Yu-\De_Yu|_{\nu-2}\le c(|s|+t^{2-2\si}) |u|_\nu;$ that is,
\e\l{DDe}
D_Y-\De_Y:C^{5/2}_\nu(Y,\R)\to C^{1/2}_{\nu-2}(Y,\R) \text{ has operator norm }\le c(|s|+t^{2-2\si})
\e
with respect to the weighted H\"older norms. 

Recall now from \cite[Theorems 2.14 and 2.16(b)]{J1} that the Sobolev space version of \eq{DE} is surjective, and from \cite[Theorem 6.9]{Marsh} that the H\"older version holds too; that is, \eq{DE} itself is surjective. So there exists a bounded linear operator $\Th:  C^{1/2}_{\nu-2}(Y,\R)\to C^{5/2}_\nu(Y,\R)\oplus \R$ which is a right inverse to \eq{DE}. Hence it follows by \eq{DDe} that the operator $(D_Y\oplus E_Y)\Th-\id:C^{1/2}_{\nu-2}(Y,\R)\to C^{1/2}_{\nu-2}(Y,\R)$ has operator norm less than $c(|s|+t^{2-2\si})\ll1.$ So $(D_Y\oplus E_Y)\Th$ is invertible and its inverse is bounded uniformly. As $\Th$ has nothing to do with $s,t$ it is also bounded uniformly with respect to $s,t.$ Accordingly so is $\Th[(D_Y\oplus E_Y)\Th]^{-1},$ which is a right inverse to $D_Y\oplus E_Y.$ This completes the proof.
\end{proof}

We prove Proposition \ref{unif prop} now.
\begin{proof}[Proof of Proposition \ref{unif prop}]
For each connected component $Y\sb X$ denote by $R_Y\oplus \pi_Y: C^{1/2}_{\nu-2}(Y,\R)\to C^{5/2}_\nu(Y,\R)\oplus \R$ the uniformly bounded right inverse produced in Lemma \ref{lem: right inverse}. Define $S:C^{1/2}(N,\R)\to C^{5/2}(N,\R)$ by $S:=G_LD_L^{-1} F_L+\sum_{Y\in\pi_0X}G_YR_YF_Y.$ Define $\varpi: C^{1/2}(N,\R)\to H^0(X,\R)$ by $v\mapsto\sum_{Y\in\pi_0X}\pi_Y F_Yv.$ Recall from Definitions \ref{dfn: D1} and \ref{dfn: D2} that $D_L=\De$ on the support of $G_L$ and that for each connected component $Y\sb X$ we have $D_Y=\De$ on the support on $G_Y.$ So for $v\in C^{1/2}(N,\R)$ we have
\ea\l{D10} 
&D_L G_LD_L^{-1}F_Lv+\sum_{Y\in \pi_0X}(D_Y G_YR_YF_Yv)+\sum_{Y\in \pi_0X} G_YE_Y\pi_Y F_Yv\\
&=\De G_LR_LF_Lv+\sum_{Y\in \pi_0X}(\De G_YR_YF_Yv)+\sum_{Y\in \pi_0X} E_Y\pi_Y F_Yv\\
&=\De S v+E\varpi v=(\De\op E)(S\op\varpi)v.
\ea
On the other hand, since $G_LF_L=F_L$ it follows that
\e\l{D11}
D_L G_LD_L^{-1}F_Lv-[D_L,G_L]F_Lv=G_LF_Lv=F_Lv.
\e
For $Y\in \pi_0X,$ since $(D_Y\oplus E_Y)(R_Y\oplus\pi_Y)=\id$ and $G_YF_Y=F_Y$ it follows that
\e\l{D12}
\sum_{Y\in \pi_0X}G_Y D_YR_YF_Yv+\sum_{Y\in \pi_0X}G_YE_Y\pi_Y F_Yv=G_YF_Yv=\sum_{Y\in \pi_0X}F_Yv.
\e
Since $F_L$ and $(F_Y)_{Y\in\pi_0X}$ form a partition of unity it follows that the rightmost term $F_Lv$ on \eq{D11} and the rightmost term
$\sum_{Y\in \pi_0X}F_Yv$ on \eq{D12} sum up to $v.$ This with \eq{D10}--\eq{D12} implies
\e
(\De\op E)(S\op\varpi)v-v=[D_L,G_L]R_LF_Lv+\sum_{Y\in\pi_0X}[D_Y,G_Y]R_YF_Yv.
\e
Since the bracket terms contains derivatives of $G_L,G_Y,$ which are $O((-\log t)^{-1}),$ we get a constant $c>0$ independent of $s,t,v$ and satisfying
\e| (\De\op E)(S\op\varpi)v-v|_{C^{1/2}}\le c(-\log t)^{-1} |v|_{C^{1/2}}\e
where the $C^{1/2}$ norms are computed with respect to the cylindrical metric on $N.$ Passing to the weighted norms we get
\e| (\De\op E)(S\op\varpi)v-v|_{\nu-2}\le c(-\log t)^{-1} |v|_{\nu-2}.\e
The operator $(\De\op E)(S\op\varpi)-\id:C^{1/2}(N,\R)\to C^{1/2}(N,\R)$ has thus operator norm $\le c(-\log t)^{-1}\ll1.$ So $J:=(\De\op E)(S\op \varpi)$ is invertible and $J^{-1}$ is bounded uniformly with respect to $s,t.$ Now $J^{-1}(S\op\varpi)$ is a right inverse to $\De\op E$ which is bounded uniformly with respect to $s,t,$ which completes the proof of Proposition \ref{unif prop}.
\end{proof}

\section{Second Approximate Solutions}\l{pert4}
We solve the special Lagrangian equation with an additional term. Recall from \S\ref{pert2} that $\th>2-2\nu.$ 
\begin{cor}\l{sol cor}
There exists $c>0$ independent of $s,t$ and such that the following holds: for $(s,t)\in\cG$ with $|s|<t^\th$ there exists $u\op w\in C^{5/2}_\nu(N,\R)\op H^0(X,\R)$ such that $P\d u+E w=0$ with $|u|_\nu\le c t^{\th+\nu}.$
\end{cor}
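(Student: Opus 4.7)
The plan is a Banach fixed-point argument in the weighted space $C^{5/2}_\nu(N,\R)\op H^0(X,\R).$ First, I would expand $P\d u=P0+\De\d u+Q\d u$ to rewrite the equation $P\d u+Ew=0$ as
\[(\De\d\op E)(u,w)=-P0-Q\d u.\]
Letting $R\colon C^{1/2}_{\nu-2}(N,\R)\to C^{5/2}_\nu(N,\R)\op H^0(X,\R)$ denote the uniformly bounded right inverse of $\De\d\op E$ provided by Proposition \ref{unif prop}, solving the equation becomes equivalent to finding a fixed point of $\cT(u,w):=-R(P0+Q\d u),$ whose right-hand side depends on $u$ alone; the problem thus reduces to a fixed-point problem for the first coordinate.

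Two estimates will drive the iteration. First, Corollary \ref{near cor} yields $|P0|_{\nu-2}\le c(|s|+t^\th);$ in the regime $|s|\le t^\th$ inherited from Theorem \ref{1}, uniform boundedness of $R$ together with $t<1$ and $\nu<0$ (so that $t^\th\le t^{\th+\nu}$) will give $|R(P0)|_\nu\le Ct^{\th+\nu}$ for some $C$ independent of $s,t.$ Second, Corollary \ref{cor: quad} furnishes the quadratic estimate
\[|Q\d u-Q\d u'|_{\nu-2}\le c_1 t^{\nu-2}|u-u'|_\nu(|u|_\nu+|u'|_\nu)\]
whenever $|u|_\nu+|u'|_\nu\le c_0 t^{\th+\nu}.$ I would fix $c_0\ge 4C$ and set $B:=\{u\in C^{5/2}_\nu(N,\R):|u|_\nu\le c_0 t^{\th+\nu}\}.$

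For self-mapping on $B,$ specialising $u'=0$ in the quadratic estimate and applying $R$ gives
\[|R(Q\d u)|_\nu\le Cc_1 t^{\nu-2}|u|_\nu^2\le Cc_1 c_0^2 t^{\th+2\nu-2}\cdot t^{\th+\nu}.\]
The key point is that $\nu$ was chosen (in the paragraph containing \eq{nu'}) to satisfy $2-2\nu<\th,$ so the exponent $\th+2\nu-2$ is strictly positive; hence $Cc_1 c_0^2 t^{\th+2\nu-2}\le c_0/2$ for $t$ small, and combined with $|R(P0)|_\nu\le (c_0/2)t^{\th+\nu}$ this yields $\cT(B)\sb B.$ The contraction estimate is analogous: for $u,u'\in B,$
\[|\cT u-\cT u'|_\nu\le C|Q\d u-Q\d u'|_{\nu-2}\le 2Cc_1 c_0 t^{\th+2\nu-2}|u-u'|_\nu<\tfrac12|u-u'|_\nu\]
for $t$ sufficiently small. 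Banach's theorem will then produce a unique fixed point $(u,w)\in B\op H^0(X,\R)$ satisfying $P\d u+Ew=0$ and $|u|_\nu\le c_0 t^{\th+\nu},$ as desired.

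The main obstacle is merely bookkeeping of the weighted H\"older exponents; everything hinges on the positivity of $\th+2\nu-2,$ arranged by the choice of $\nu\in(\nu',0)$ with $2-2\nu<\th.$ Once this is noted, the argument is a standard Newton-type iteration, with the smallness factor $t^{\th+2\nu-2}$ simultaneously controlling both the invariance of $B$ under $\cT$ and the contraction constant.
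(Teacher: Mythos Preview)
Your proposal is correct and follows essentially the same contraction-mapping argument as the paper, using the same three ingredients (Corollary~\ref{near cor}, Corollary~\ref{cor: quad}, Proposition~\ref{unif prop}) and the same crucial inequality $\th+2\nu-2>0$. The only cosmetic difference is that the paper iterates the map $T(v):=-P0-Q\d Rv$ on the ball $\{|v|_{\nu-2}\le t^{\th+\nu}\}$ in the \emph{target} space $C^{1/2}_{\nu-2}(N,\R)$ and then sets $u\op w:=(R\op\pi)v$ at the end, whereas you iterate directly in the domain space; the two are equivalent via $(u,w)=Rv$.
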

\begin{proof}
Denote by $R\op \pi: C^{1/2}_{\nu-2}(N,\R)\to  C^{5/2}_\nu(N,\R)\op H^0(X,\R)$ the uniformly bounded right inverse to $\De\op E$ produced in Proposition \ref{unif prop}. Put $B:=\{v\in C^{1/2}_{\nu-2}(N,\R):|v|_{\nu-2}\le t^{\th+\nu}\}.$ Since $R$ is uniformly bounded it follows that there exists $c_0>0$ independent of $s,t$ and such that for $v\in B$ we have $|Rv|_\nu\le c_0t^{\th+\nu},$ which implies that we can define $Q\d Rv$ as in Proposition \ref{quad prop}. We can therefore define $T:B\to C^{1/2}_{\nu-2}(N,\R)$ by $v\mapsto-P0-Q\d Rv.$ 

We show that $T$ maps $B$ to itself. Recall from Corollary \ref{cor: quad} that there exists $c_1>0$ independent of $s,t$ and such that for $v\in B$ we have
\e\l{s1} |Q\d Rv|_{\nu-2}\le  c_1t^{\nu-2}|Rv|_{\nu-2}^2\le c_1c_0^2t^{\nu-2}t^{2(\th+\nu)},\e
where the last inequality follows from the definition of $B.$ On the other hand, by Corollary \ref{near cor} we have
\e\l{s2} |P0|_{\nu-2}\le |s|+t^\th \le 2t^\th.\e
This and \eq{s1} imply that 
\e\l{s3} |Tv|_{\nu-2} \le |P0|_{\nu-2}+|Q\d Rv|_{\nu-2}\le(2t^{-\nu}+c_1c_0^2t^{\th+2\nu-2})t^{\th+\nu}.\e
Since $\th>2-2\nu$ and $\nu>0$ it follows by \eq{s3} that for $t$ small enough we have $Tv\in B.$

We have thus defined the map $T:B\to B.$ For $v,v'\in B$ we have
\ea &|Tv-Tv'|_{\nu-2}  \le c_1 t^{\nu-2} |Rv-Rv'|_\nu (|Rv|_\nu+|Rv'|_\nu)\\
&\le c_1c_0^2 t^{\nu-2} |v-v'|_{\nu-2}(|v|_{\nu-2}+|v'|_{\nu-2})\le c_1c_0^2 t^{\th+2\nu-2}|v-v'|_{\nu-2}.\ea
Hence we see, using again $\th+2\nu-2>0,$ that for $t$ small enough the map $T:B\to B$ is a contraction map. As $B$ is closed we can apply to it the fixed point theorem; that is, there exists $v\in B$ with $Tv=v.$ 

Since $R\oplus \pi$ is a right inverse to $\De\oplus E$ it follows that putting $u\op w:=(R\op \pi)v$ we have $(\De\op E)(u\op w)=v.$ Now
\e
P\d u+Ew=P0+ Qu+\De u+Ew=P0+Q\d Rv+ v=-Tv+v=0.\e
Moreover $|u|_\nu\le c_0|v|_{\nu-2}\le c_0t^{\th+\nu}$ as we have to prove.
\end{proof}

We prove now Theorem \ref{1} for $X$ connected.
\begin{proof}[Proof of Theorem \ref{1} for $X$ connected]
Regard $w\in H^0(X,\R)\cong\R$ as a real constant. Since $E_X:N\to[0,1]$ is supported in $N\-\bigsqcup_{x\in X^\sing}t_xK_x\cong X\-\bigsqcup_{x\in X^\sing}t_xK_x$ it follows that
\e\l{add0}
w\int_X E_X\,{\rm dvol}=w\int_N E_X\,{\rm dvol}=-\int_NP\d u\,{\rm dvol}=-\int_N\Im\Om^s.
\e
Here the right-hand side is the pairing of $[N]\in H_m(M,\Z)$ and $[\Im\Om^s]\in H^m(M,\R).$ Definition \ref{dfn: N} shows that $[N]$ converges, as $(s,t)$ tends to $(0,0),$ to $[X]\in H_m(N,\Z).$ But the integer homology classes are discrete, so $[N^{st}]=[X]$ for $(s,t)$ close enough to $(0,0).$ The right-hand side of \eq{add0} is therefore equal to the pairing $[X]\cdot[\Im\Om^s].$ In the current case, as $X$ is connected the left-hand side of \eq{eq for Om} is the sum over all connected components of $C_{xj}\sb C_x.$ But the pairing $\sum_{C_{xj}\sb C_x} [C_{xj}\cap S^{2m-1}]\cdot Z(L_x)$ is equal to the integral of $\Im\Om$ over a compact set in $L_x,$ which vanishes automatically. So the left-hand side of \eq{eq for Om} vanishes, and accordingly $[X]\cdot[\Im\Om^s]=0.$ Thus $w\int_X E_X\,{\rm dvol}=0.$ But $E_X$ is so chosen that $\int_X E_X\,{\rm dvol}\ne0$ and we have therefore $w=0.$ This means that the graph of $\d u$ is a special Lagrangian submanifold of $(M;\om^s,J^s,\Om^s).$
\end{proof}

When $X$ is not connected, we deal with the additional term $w$ as follows.
\begin{lem}\l{add lem}
Suppose that for each $(s,t)\in\cG$ we are given $u\op w\in C^{5/2}_\nu(N,\R)\op H^0(X,\R)$ with $P\d u+Ew=0.$ Then there exist $c,\ep>0$ independent of $s,t$ and such that if we write $w=\bop_{Y\in\pi_0X}w_Y\in H^0(X,\R)$ then 
$\max_{Y\in \pi_0X}|w_Y|\le c t^{m+\ep}.$
\end{lem}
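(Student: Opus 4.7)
The plan is to localise to each connected component $Y\sb X$ the integration argument used above for $X$ connected. For each such $Y$ I construct a compact codimension-zero submanifold-with-boundary $\Ga_Y\sb N$ that contains $\supp E_Y$ but is disjoint from $\supp E_{Y'}$ for every $Y'\ne Y.$ Explicitly $\Ga_Y$ consists of the core $Y\-\bigsqcup_{x\in X^\sing}U_x,$ the neck piece $(C_x\cap U_x\-t_xK_x)_j$ for each $C_{xj}\sb Y,$ and, for each such pair $(x,j),$ the $j$-th branch of $t_x(L_x\cap K_x)$ obtained by cutting $L_x$ along an auxiliary $(m-1)$-sphere $\Si'_{xj}$ placed deep enough in the $j$-th end of $L_x$ that its various branches are disjoint there (possible because the ends of $L_x$ are asymptotic to the disjoint cones $C_{xj},$ even though $L_x$ itself may be connected).

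Write $\wt\Ga_Y:=(\gr\d u)|_{\Ga_Y}\sb M.$ Integrating $P\d u+Ew=0$ over $\Ga_Y$ against the volume form of the conical metric on $N$ and using the definition $(P\d u){\rm dvol}=\Im\Om^s|_{\gr\d u}$ together with the disjointness of supports above gives
$$\int_{\wt\Ga_Y}\Im\Om^s=-w_Y\int_N E_Y\,{\rm dvol},$$
where $\int_N E_Y\,{\rm dvol}$ is a smooth function of $(s,t)$ bounded away from $0$ in absolute value for $(s,t)$ near $(0,0).$ Now close $\wt\Ga_Y$ up in $M$ by coning each boundary sphere $\wt\Si'_{xj}$ to the origin $0\in\C^m$ via $\Up^s_x;$ the resulting closed $m$-cycle represents $[\ov Y]\in H_m(M,\Z),$ as one sees by comparing with the $m$-cycle used in the proof of Theorem \ref{obst thm}: the two differ by $m$-chains contained in the contractible neighbourhoods $\Up^s_x(U_x)\sb M,$ which are null-homologous. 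Hence
$$[\ov Y]\cdot[\Im\Om^s]=\int_{\wt\Ga_Y}\Im\Om^s+\sum_{C_{xj}\sb Y}\int_{0*\wt\Si'_{xj}}\Im\Om^s.$$

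Applying the rescaling and $\Ps$-potential computation \eq{obst3}--\eq{obst7} to each cone integral on the right, and using $|u|_\nu\le ct^{\th+\nu}$ from Corollary \ref{sol cor} to bound the $\d u$-perturbation of each $\wt\Si'_{xj},$ yields
$$\int_{0*\wt\Si'_{xj}}\Im\Om^s=t^m\ps^s(x)^m(1+\ga'_{xj})[C_{xj}\cap S^{2m-1}]\cdot Z(L_x)$$
for some smooth $\ga'_{xj}:\cG\to\R$ decaying with power order at $(0,0).$ Substituting into the previous display and subtracting the assumed identity Theorem \ref{obst thm}(ii) I obtain
$$w_Y\int_N E_Y\,{\rm dvol}=\sum_{C_{xj}\sb Y}t^m\ps^s(x)^m(\ga'_{xj}-\ga_x)[C_{xj}\cap S^{2m-1}]\cdot Z(L_x),$$
whose right-hand side is $O(t^{m+\ep})$ for some $\ep>0$ independent of $s,t$ because $\ga_x$ and every $\ga'_{xj}$ decays with power order and the remaining factors are uniformly bounded. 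The conclusion $|w_Y|\le ct^{m+\ep}$ follows after dividing by the uniformly bounded-below quantity $\int_N E_Y\,{\rm dvol}.$ The step I expect to require the most care is the construction of $\Ga_Y$ together with the verification that the closed cycle formed after coning really represents $[\ov Y];$ once these are in place everything else is a controlled repeat of the argument used to prove Theorem \ref{obst thm}.
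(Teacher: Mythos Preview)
Your approach is essentially the same as the paper's: localize to a piece of $\gr\d u$ corresponding to $Y,$ cap its boundary by cones to the singular points to get a cycle homologous to $\bar Y,$ and compare the resulting integral of $\Im\Om^s$ with the assumed identity \eq{eq for Om} to bound $w_Y.$

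The one point worth flagging is that your construction of $\Ga_Y$ is more complicated than needed, and the phrase ``the $j$-th branch of $t_x(L_x\cap K_x)$'' is not quite well-defined as written: when $L_x$ is connected the compact core $L_x\cap K_x$ has no $j$-th branch, and ``deep in the $j$-th end of $L_x$'' ordinarily means at large radius, hence outside $K_x,$ not inside it. The paper sidesteps this entirely by taking $Y^u$ to be simply the connected component of $(\gr\d u)\-\bigsqcup_{x\in X^\sing}t_xK_x$ that is $C^1$ close to $Y\-\bigsqcup_{x\in X^\sing}t_xK_x.$ Its boundary already sits on $\partial(t_xK_x),$ so no auxiliary spheres are needed and no part of $L_x\cap K_x$ enters the picture; the boundary components $\Si_{xj}:=t_x^{-1}(\partial Y^u)$ are then coned to $0$ exactly as in the proof of Theorem~\ref{obst thm}. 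With that simplification your argument goes through verbatim and matches the paper's.
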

\begin{proof}
We proceed as in the proof of Theorem \ref{obst thm}. For each connected component $Y\sb  X$ there is a unique connected component of $(\gr\d u)\-(\bigsqcup_{x\in X^\sing}t_xK_x)$ that is $C^1$ close to $Y\-\bigsqcup_{x\in X^\sing} t_xK_x,$ which we denote by $Y^u.$ This $Y^u$ is a manifold with boundary, with $\partial Y^u\sb \bigsqcup_{x\in X^\sing} t_xK_x.$ For $x\in X^\sing$ let $\Si_{xj}\sb t_x^{-1}(Y^u\cap t_xK_x)$ be a connected component, which is a compact $m-1$ manifold diffeomorphic to some connected component $C_{xj}\cap S^{2m-1}\sb C_x\cap S^{2m-1}.$ Denote by $0*\Si_{xj}\sb \C^m$ the union of the line segments between the origin $0$ and the points of $\Si_{xj}.$ The union $Y^u\cup \bigsqcup_{C_{xj}\sb Y} t_x(0*\Si_{xj})$ defines in $M$ an $m$-cycle homologous to $\bar Y.$ Integrating $\Im\Om^s$ over this we get
\e\l{add2} [\bar Y]\cdot [\Im\Om^s]=\sum_{C_{xj}\sb Y}\int_{t_x(0*\Si_{xj})}\Im\Om^s+\int_{Y^u}\Im\Om^s.\e
Following with modification the proof of Theorem \ref{obst thm} we find $\ep>0$ such that
\e\l{add3}
\sum_{C_{xj}\sb Y}\int_{t_x(0*\Si_{xj})}\Im\Om^s
=\sum_{C_{xj}\sb Y}t_x^m\ps^s(x)^m[C_{xj}\cap S^{2m-1}]\cdot Z(L_x)+O(t^{m+\ep}).
\e
But it is supposed in Theorem \ref{1} that $(s,t)\in\cG$ satisfies (ii) of Theorem \ref{obst thm}; that is,
\e\l{add4}
[\bar Y]\cdot [\Im\Om^s]=\sum_{x\in C_{xj}}t^m\ps^s(x)^m(1+\ga_x)[C_{xj}\cap S^{2m-1}]\cdot Z(L_x).
\e
Recalling that $t_x=\sqrt{1+\be_x}t$ and that $\be_x,\ga_x$ decay with power orders it follows from \eq{add2}--\eq{add4} that
\e\l{add5}
\int_{Y^u}\Im\Om^s=O(t^{m+\ep}).
\e
Notice that $\Im\Om^s|_{\gr \d u}=(P\d u){\rm dvol}=-(Ew){\rm dvol}.$ By the definition of $E$ we have $Ew=\sum_{Y\in\pi_0X}w_YE_Y.$ Since each $E_Y$ is supported in $Y^u\cong Y\-\bigsqcup_{x\in X^\sing}t_xK_x$ it follows that
\e\l{add6}
\int_{Y^u}\Im\Om^s=-\int_{Y^u}(Ew){\rm dvol}=-w_Y\int_Y E_Y{\rm dvol}=-w_Y.
\e
It follows from \eq{add5} and \eq{add6} that $w_Y=O(t^{m+\ep}).$
\end{proof}

\section{Proof of Theorem \ref{1}}\l{Proof of 1}
We begin by recalling from \cite{J3} the key result we will use.
\begin{thm}[Theorem 5.3 of \cite{J3}]\l{5.3}
For every $m\in\{3,4,5,\dots\},c>0$ and $\ep>0$ there exists $t_0>0$ such that if $t\in(0,t_0)$ then the following holds. Let $(M;\om,J,\Om)$ be an almost Calabi--Yau $m$-fold. Define $\ps:M\to(0,\iy)$ by \eq{ps}. Let $N\sb (M,\om)$ be a compact oriented Lagrangian submanifold with $\int_{N}\Im\Om=0.$ Let $g$ be the Riemannian metric on $N$ induced from the K\"ahler metric of $(M;\om,J).$ Suppose that 
\iz\item[\bf(i)]
the Riemannian manifold $(N,g)$ has injectivity radius $\ge ct$ and sectional curvature $\le ct^{-2}.$
\iz
Let $U\sb (M,\om)$ be a Weinstein neighbourhood of $N$ isomorphic to a neighbourhood of the zero-section $N\sb T^*N$ whose fibres are balls of radius $ct$ around the zero-section. Denote by $\pi:T^*N\to N$ the vector bundle projection. Notice that the Levi-Civita connection of $(N,g)$ induces over $T^*N$ a vector bundle isomorphism $T(T^*N)\cong \pi^*TN\oplus \pi^*T^*N.$ Since the two vector bundles $\pi^*TN$ and $\pi^*T^*N$ have metrics and Levi-Civita connections induced from $g,$ we get on $T(T^*N)$ a metric and a connection. Denote these by $\hat g$ and $\wh\nb.$ Suppose that 
\iz\item[\bf(ii)]
for $k=0,1,2,3$ we have $\sup_U|\wh\nb^k\Im\Om|\le ct^{-k}$ where $|\,\,|$ is computed pointwise with respect to $\hat g.$
\iz
For $p\in[1,\iy]$ denote by $L^p(N,\R)$ the $L^p$ space of the Riemannian manifold $N.$ For $p\in[1,\iy)$ and $k=0,1,2,\dots$ denote by $L^p_k(N,\R)$ the Sobolev space of the Riemannian manifold $N.$ So $L^p_0(N,\R)=L^p(N,\R).$ Suppose now that $W\sb L^2(N,\R)$ is a finite-dimenisonal $\R$-vector subspace, and $\pi_W:L^2(N,\R)\to W$ the $L^2$ orthogonal projection. Suppose that
\iz
\item[\bf(iii)] for $v\in L^2_1(N,\R)$ with $\pi_W v=0$ we have $\|v\|_{L^{\frac{2m}{m-2}}}\le c\|\d v\|_{L^2},$
\item[\bf(iv)] for $w\in W$ we have $\|\d^*\d w\|_{L^{\frac{2m}{m+2}}}\le \frac12 c\|\d w\|_{L^2}$ and
\item[\bf(v)]
for $w\in W$ with $\int_{N} w\,{\rm dvol}=0$ we have $\|w\|_{C^0}\le ct^{1-\frac m2}\|\d w\|_{L^2}.$
\iz
Denote by {\rm dvol} the volume form of the Riemannian manifold $N$ and define a smooth function $e^{i\Th}:N\to S^1$ by $\Om|_{N}=(\ps|_{N})^m e^{i\Th}{\rm dvol}.$ Denote by $\cos\Th:N\to\R$ its real part and by $\sin\Th:N\to\R$ its imaginary part. Suppose that
\iz
\item[\bf(vi)]
$\|\pi_W(\ps^m\sin\Th)\|_{L^1}\le ct^{\ep+m}$ and
\item[\bf(vii)]
for each $p\in[1,\iy]$ we have $\|\ps^m\sin\Th\|_{L^p}+t\|\d(\ps^m\sin\Th)\|_{L^p}\le  ct^{\ep+\frac{ m}{p}}.$
\iz
Then there exists on $N$ an exact $1$-form $\xi$ whose graph lies in the Weinstein neighbourhood $U\sb(M,\om)$ and defines a special Lagrangian submanifold of $(M;\om,J,\Om).$ There is moreover a constant $c'>0$ depending only on $m,c,\ep$ such that $\sup_{N}|\xi|\le c't^{1+\ep}$ where $|\,\,|$ is computed pointwise with respect to $g.$
\end{thm}
\begin{rmk}\l{exp}
Theorem \ref{5.3} is in fact slightly weaker than \cite[Theorem 5.3]{J3}. The hypothesis (vii) is stronger than the corresponding one in \cite[Theorem 5.3]{J3}. The latter requires the estimate in (vii) to hold only for $p\ge \frac{2m}{m+2}.$ This is crucial to the results of \cite{J3,J4} because in those circumstances the $L^p$ estimate is sensitive to the choice of $p;$ see also Remark \ref{worse rmk}. On the other hand, our second approximate solutions satisfy so good estimates that the $L^p$ estimate will hold for every $p\ge1.$

The exponent $\frac{2m}{m+2}$ is the dual to the exponent $\frac{2m}{m-2}$ in the uniform Sobolev inequality in (iii) above. The idea of using it goes back to \cite{Lee}.
\end{rmk}

Let $s,t$ satisfy $|s|<t^\th$ and $(s,t)\in\cG.$ Let $u\op w$ be as in Corollary \ref{sol cor} and embed $N$ into $M$ using the graphical diffeomorphism $N\cong\gr\d u.$ We show then that this embedding $N\to M$ satisfies the hypotheses (i)--(vii) of Theorem \ref{5.3} with $(M;\om^s,J^s,\Om^s)$ in place of $(M;\om,J,\Om).$ The proof of (i)--(vi) are nearly the same as in \cite{J3,J4}. In the circumstances of \cite{J3,J4} there are only one family of approximate solutions which we perturb to the true solutions. These approximate solution satisfies (i)--(vi) of Theorem \ref{5.3}. The details about (i),(ii) are given in \cite[\S6.3]{J3} in the simplest case (amongst the results of \cite{J3,J4}). The details about (iii)--(v) are given in \cite[\S7.3]{J3} in the simplest case. The details about (vi) are given in \cite[\S7.2]{J3} in the simplest case.

Our current circumstances are more complex because the second approximate solutions are further perturbations of the first approximate solutions. But the second approximate solutions are in $\bigsqcup_{x\in X^\sing}U_x$ as close to $t_xL_x$ as the first approximate solutions are, and outside $\bigsqcup_{x\in X^\sing}t_xK_x$ as close to $X$ as the first approximate solutions are. These properties are all we need for (i)--(v) of Theorem \ref{5.3}. For (vi) we use also Lemma \ref{add lem}.

The proof of (vii) is the difference from \cite{J3,J4} as we show now. Since $u\op w$ is as in Corollary \ref{sol cor} it follows that $\Im\Om|_{N}=-\sum_{Y\in\pi_0X}w_YE_Y.$ But as in Lemma \ref{add lem} every constant $w_Y$ is $O(t^{m+\ep}).$ Since each $E_Y$ is supported in $Y\-\bigsqcup_{x\in X^\sing} U_x$ it follows that for every $p\in[1,\iy]$ we have
\e\l{9last} \|\ps^m\sin\Th\|_{L^p}+\|\d(\ps^m\sin\Th)\|_{L^p}
\le ct^{m+\ep},\e
which implies (vii). We can thus apply Theorem \ref{5.3} to $N,$ which completes the proof of Theorem \ref{1}. \qed

\section{Parameter Differentiability}\l{sect10}
In this section we prove the general results we shall need from analysis, concerning smooth families of linear elliptic operators over compact manifolds. The notation of this section is slightly different from that of the other sections. 

For a manifold $Y$ we denote by $C^\iy(Y)$ the $\R$-vector space of $C^\iy$ functions from $Y$ to $\R.$ Let $X$ be a compact Riemannian manifold, $d\ge0$ an integer and $(\De_t:C^\iy(X)\to C^\iy(X))_{t\in\R^d}$ a family of second-order elliptic linear differential operators with $C^\iy$ coefficients. We suppose that $\De_t$ is $C^\iy$ with respect to $t$ in the following sense. For $u\in C^\iy(\R^d\times X)$ define $\De u:\R^d\times X\to \R$ by assigning to $(t,x)\in\R^d\times X$ the value $(\De_tu|_{\{t\}\times X})(x).$ This $\De u:\R^d\times X\to \R$ should then be $C^\iy.$ Equivalently, the coefficients of the differential operators are $C^\iy$ functions of the two variables $t,x.$ Let $t_1,\dots,t_d$ be the coordinates of $\R^d.$ Then $\De:C^\iy(\R^d\times X)\to C^\iy(\R^d\times X)$ is a linear differential operator which does not contain $\frac{\partial}{\partial t_1},\dots, \frac{\partial}{\partial t_d}.$

Fix $\al\in(0,1).$ For $k=0,1,2,\dots$ and $v\in C^\iy(X)$ denote by $\|v\|_{C^{k,\al}}$ the H\"older $C^{k,\al}$ norm of $v$ relative to the given Riemannian metric of $X.$ For $u\in C^\iy(\R^d\times X)$ denote by $\|u\|_{C^{k,\al}}:\R^d\to\R$ the function $t\mapsto \|u|_{\{t\}\times X}\|_{C^{2,\al}}$ and by $\|u\|_{C^0}:\R^d\to\R$ the function $t\mapsto \max_{x\in X}|u(t,x)|.$ These may therefore be thought of as the $C^{k,\al}$ and $C^0$ norms in the $X$-direction. They satisfy the following Schauder estimate.  
\begin{thm}\l{thm: Schaud}
Let $X,\De$ be as above and $\Om\sb\R^d$ a compact subset. Then there exists $A>0$ such that for every $u\in C^\iy(\R^d\times X)$ we have, at every point of $\Om,$ $\|u\|_{C^{2,\al}}\le A(\|\De u\|_{C^{\al}}+\|u\|_{C^0}).$ 
\end{thm} 
\begin{proof}
The point is that $A$ is independent of $t\in\Om.$ As in \cite[Theorem 6.2]{GT} such constants depend on the coefficients of the differential operators after writing them with local coordinates. But as $\Om$ is compact the coefficients are $t$-uniformly bounded and so there exists an upper bound for $A.$
\end{proof}
\begin{rmk}
Here the Riemannian metric of $X$ is independent of $t.$ We can also consider a smooth family of Riemannian metrics and the corresponding H\"older norms. But as in the statement above we are interested in the estimates for $t\in\Om$ with $\Om$ compact, and the Riemannian metrics (and the H\"older norms) are uniformly equivalent for $t\in \Om.$ So essentially we need only one Riemannian metric.     
\end{rmk}

Denote by ${\rm vol}$ the volume density of the Riemannian manifold $X.$ We prove a corollary of Theorem \ref{thm: Schaud}.
\begin{cor}\l{cor: Schaud}
Let $X,\De$ be as above and $\Om\sb\R^d$ a compact subset. Suppose that for each $t\in\R^d$ the vector subspace $\ker\De_t\sb C^\iy(X)$ is one-dimensional and spanned by constant functions. Then there exists $A>0$ with the following property: let $u\in C^\iy(\R^d\times X)$ be such that for $t\in\Om$ we have $\int_X u|_{\{t\}\times X}{\rm dvol}=0;$ then at every point of $\Om$ we have $\|u\|_{C^{2,\al}}\le A\|\De u\|_{C^{\al}}.$ 
\end{cor} 
\begin{proof}
If $A$ does not exist then there exist for every $n=0,1,2,\dots$ some $u_n\in C^\iy(X)$ with $\int_X u_n{\rm dvol}=0$ and some $t_n\in\Om$ with $\|u_n\|_{C^{2,\al}}> n\|\De_{t_n}u_n\|_{C^{2,\al}}.$ Since $\Om$ is compact we find, passing to a subsequence if we need, that $t_n$ converges to some $t_\iy\in\Om.$ As $\|u_n\|_{C^{2,\al}}> n\|\De_{t_n}u_n\|_{C^{k,\al}}\ge0$  we can define $\|u_n\|^{-1}_{C^{2,\al}}.$ Put $v_n:=\|u_n\|^{-1}_{C^{2,\al}}u_n.$ Then $\|v_n\|_{C^{2,\al}}=1$ and $\|\De_{t_n}v_n\|_{C^{\al}}<\frac1n.$ Since the H\"older norms are $n$-uniformly equivalent it follows that $\|v_n\|_{C^{2,\al}}$ is $C^{2,\al}$ bounded. Passing to a subsequence if we need, we see that $v_n$ converges in the $C^{2}$ sense to some $C^2$ function $v_\iy:X\to\R.$ Recalling that $\|\De_{t_n}v_n\|_{C^{\al}}<\frac1n$ we find $\De_{t_\iy}v_\iy=0.$ So $\De_{t_\iy}v_\iy=0;$ that is, $v_\iy$ is constant. But since $\int_X u_n{\rm dvol}=0$ it follows that $\int_Xv_\iy{\rm dvol}=0$ and hence that $v_\iy=0.$ So $v_n$ converges in the $C^2$ sense to $0.$ Thus $\|\De_{t_n} v_n\|_{C^{\al}}+\|v_n\|_{C^0}$ converges to $0.$ By Theorem \ref{thm: Schaud} therefore $\|v_n\|_{C^{2,\al}}$ converges to $0,$ which however contradicts $\|v_n\|_{C^{2,\al}}=1$ and thus completes the proof.
\end{proof}

We denote by $\nb$ the Levi-Civita connection of $X.$ If $u=(u_t\in C^\iy(X))_{t\in\R^d}$ is a $C^\iy$ family then for $l=0,1,2,\dots$ define
\begin{align*}
\partial_t^l u:=
\Bigl(\Bigl(\frac{\partial}{\partial t_1}\Bigr)^{q_0}\dots\Bigl(\frac{\partial}{\partial t_d}\Bigr)^{q_d}u\Bigr)_{\substack{q_0,\dots,q_d\in\{0,\dots,l\}\\ q_0+\dots+q_d=l}}\text{ and }\\
\|\partial_t^l u\|_{C^{k,\al}}:=
\max_{\substack{q_0,\dots,q_d\in\{0,\dots,l\}\\ q_0+\dots+q_d=l}}
\Bigl\| \Bigl(\frac{\partial}{\partial t_1}\Bigr)^{q_0}\dots\Bigl(\frac{\partial}{\partial t_d}\Bigr)^{q_d}u\Bigr\|_{C^{k,\al}.} 
\end{align*}
We prove a statement to the effect that if $\De_tu_t$ is $C^\iy$ with respect to $t$ then $u_t$ is $C^\iy.$ More precisely, the following holds. We give a proof for the sake of clarity although it is similar to that of \cite[Theorem 7.5]{Kod}.
\begin{thm}\l{3}
Let $X,\De$ be as above. Suppose that for each $t\in\R^d$ the vector subspace $\ker\De_t\sb C^\iy(X)$ is one-dimensional and spanned by constant functions. Let $(u_t\in C^\iy(X))_{t\in\R^d}$ be such that for $t\in\R^d$ we have $\int_X u_t{\rm dvol}=0$ and such that the function $\De u:(t,x)\mapsto (\De_tu_t)(x)$ is a $C^\iy$ function from $\R^d\times X\to \R;$ then $u:(t,x)\mapsto u_t(x)$ is $C^\iy$ function from $\R^d\times X$ to $\R.$
\end{thm} 
\begin{proof}
For $l=0,1,2,\dots$ we say that a continuous function $f:\R^d\times X\to \R$ is $C^l$ with respect to $t$ if for every $p=0,1,2,\dots$ and $q=0,\dots,l$ the derivatives $\nb^p\partial_t^qf$ exist at every point of $\R^d\times X$ and are continuous sections of $(T^*X)^{\otimes p}.$ We prove that for $l=0,1,2,\dots$ the following holds:
\begin{equation}\l{3l}\parbox{10cm}{
let $(u_t\in C^\iy(X))_{t\in\R^d}$ be such that for $t\in\R^d$ we have $\int_X u_t{\rm dvol}=0$ and such that the function $\De u:(t,x)\mapsto (\De_tu_t)(x)$ is $C^l$ with respect to $t;$ then $u:(t,x)\mapsto u_t(x)$ is $C^l$ with respect to $t.$
}\end{equation}
We do this in three steps. 
\begin{proof}[Proof of \eq{3l} for $l=0$]
We show first that if $v,w\in C^0(\R^d,C^\al(X))$ then the functions $v+w,vw:t\mapsto v_t+w_t,v_tw_t$ are also $C^0$ functions from $\R^d$ to $C^\al(X).$ Let $s\in \R^d.$ Then $\lim_{t\to s}\|v_t\|_{C^\al}=\|v_s\|_{C^\al}$ and $\lim_{t\to s}\|w_t-w_s\|_{C^\al}=\lim_{t\to s}\|v_t-v_s\|_{C^\al}=0.$ These with the identity
\[v_tw_t-v_sw_s=v_t(w_t-w_s)+(v_t-v_s)w_s\] 
implies that $\lim_{t\to s}\|v_tw_t-v_sw_s\|_{C^\al}=0.$ So $vw\in C^0(\R^d,C^\al(X)).$ The proof for $v+w$ is simpler and we omit it. We show now that for $q=0,1,2,\dots$ the following holds: 
\begin{equation}\l{30}
\parbox{10cm}{
let $\xi_0,\dots,\xi_q$ be $t$-independent vector fields on $X$ and for $a=0,\dots,q$ put $\nb_a:=\nb_{\xi_a},$ the derivative in the direction $\xi_a;$ then $\nb_0\cdots\nb_q u_t\in C^0(\R^d,C^\al(X)).$
}\end{equation}
Let $s\in\R^d.$ Since $\De u\in C^\iy(\R^d\times X)$ it follows that $\lim_{t\to s}\|\De_tu_t-\De_su_s\|_{C^\al}=0.$ On the other hand, since $\De_t$ is $C^\iy$ with respect to $t$ it follows that $\lim_{t\to s}\|\De_su_s-\De_tu_s\|_{C^\al}=0.$ These with the identity
\[\De_tu_t-\De_tu_s=\De_tu_t-\De_su_s+\De_su_s-\De_tu_s\]
implies $\lim_{t\to s}\|\De_tu_t-\De_tu_s\|_{C^\al}=0.$ By Corollary \ref{cor: Schaud} therefore $\lim_{t\to s}\|u_t-u_s\|_{C^{2,\al}}=0.$ So \eq{30} holds for $q\le 1.$

Suppose next that $k\ge2$ be an integer and that \eq{30} holds for $q\le k-1.$ Computing the commutator $\De_t\nb_2\cdots\nb_k-\nb_2\cdots\nb_k\De_t$ and applying it to $u_t$ we see that the difference
\e\l{comm} \De_t\nb_2\cdots\nb_k u_t-\nb_2\cdots\nb_k\De_t u_t\e 
is a linear combination of the derivatives of order $\le k$ of $u_t.$ By the induction hypothesis these lie in $C^0(\R^d,C^\al(X)).$ Also the coefficients of the linear combination are $C^\iy$ with respect to $t$ and may therefore be regarded as elements of $C^0(\R^d,C^\al(X)).$ As $C^0(\R^d,C^\al(X))$ is closed under sums and products, the linear combination lies in $C^0(\R^d,C^\al(X));$ that is, \eq{comm} lies in $C^0(\R^d,C^\al(X)).$ Since $\De_tu_t$ is $C^\iy$ with respect to $t$ it follows that the second term of \eq{comm} lies in $C^0(\R^d,C^\al(X))$ and hence that $\De_t\nb_0\cdots\nb_k u_t\in C^0(\R^d,C^\al(X)).$ 

Put $v_t:=\nb_2\cdots\nb_k u_t$ so that $t\mapsto\De_tv_t$ lies in $C^0(\R^d,C^\al(X)).$ Letting $s\in\R^d$ we have $\lim_{t\to s}\|\De_tv_t-\De_sv_s\|_{C^\al}=0.$ On the other hand, since $\De_t$ is $C^\iy$ with respect to $t$ it follows that $\lim_{t\to s}\|\De_sv_s-\De_tv_s\|_{C^\al}=0.$ These with the identity
\[\De_tv_t-\De_tv_s=\De_tv_t-\De_sv_s+\De_sv_s-\De_tv_s\]
implies $\lim_{t\to s}\|\De_tv_t-\De_tv_s\|_{C^\al}=0.$ By the induction hypothesis, on the other hand, $\lim_{t\to s}\|v_t-v_s\|_{C^0}=0.$ By Theorem \ref{thm: Schaud} therefore $\lim_{t\to s}\|v_t-v_s\|_{C^{2,\al}}=0.$ So $\lim_{t\to s}\|\nb_0\nb_1(v_t-v_s)\|_{C^{\al}}=0;$ that is, $t\mapsto \nb_0\nb_1\nb_2\cdots\nb_k u_t$ lies in $C^0(\R^d,C^\al(X)).$ Thus \eq{30} holds for $q=k$ and the induction is complete.

It follows from \eq{30} that $u_t$ is $C^0$ with respect to $t.$
\end{proof}

\begin{proof}[Proof of \eq{3l} for $l=1$]
Take $d=1$ for the moment. The first paragraph is devoted to proving that $\frac{\d u_t}{\d t}|_{t=0}$ exists. By hypothesis $\De_tu_t$ is $C^1$ with respect to $t;$ and in particular, $\partial_t(\De_tu_t)=\frac{\d}{\d t}(\De_tu_t)$ is well defined. As the differential operator $\De_t$ depends smoothly on $t$ we can define $\partial_t\De_t=\frac{\d \De_t}{\d t}$ which is also a differential operator depending smoothly on $t.$ Define $\dot u\in C^\iy(X,\R)$ by
\[\De_0\dot u:=\frac{\d}{\d t}(\De_tu_t)\Bigm|_{t=0}-\frac{\d\De_t}{\d t}\Bigm|_{t=0}u_0\text{ and }\int_X\dot u\,\d{\rm vol}=0.\]
By simple computation we can verify that
\ea\l{De_tu_t0}
\De_t\Bigl(\frac{u_t-u_0}{t}-\dot{u}\Bigr)&=\frac{\De_tu_t-\De_0u_0}{t}-\frac{\d}{\d t}(\De_tu_t)\Bigm|_{t=0}\\
&+\Bigl(\frac{\d\De_t}{\d t}\Bigm|_{t=0}-\frac{\De_t-\De_0}{t}\Bigr)u_0
+(\De_0-\De_t)\dot{u}.
\ea
Since $\De_tu_t$ is $C^1$ with respect to $t$ (as defined just after the statement of Theorem \ref{3}) it follows that if $\xi$ is a $t$-independent vector field on $X$ then for $p=0,1$ we have
\e\l{De_tu_t1}
\lim_{t\to0}\sup_X\Bigl|\nb_\xi^p\Bigl(\frac{\De_tu_t-\De_0u_0}{t}-\frac{\d}{\d t}(\De_tu_t)\Bigm|_{t=0}\Bigr)\Bigr|=0.
\e 
As $\|\,\,\|_{C^\al}$ is the norm of $C^\al(X)$ it follows from \eq{De_tu_t1} that
\e\l{De_tu_t2}
\lim_{t\to0}\Bigl\|\frac{\De_tu_t-\De_0u_0}{t}-\frac{\d}{\d t}(\De_tu_t)\Bigm|_{t=0}\Bigr\|_{C^\al}=0.
\e
On the other hand, since $\De_t$ depends smoothly on $t$ it follows that
\e\l{De_tu_t3}
\lim_{t\to0}\Bigl\|\Bigl(\frac{\d\De_t}{\d t}\Bigm|_{t=0}-\frac{\De_t-\De_0}{t}\Bigr)u_0\Bigr\|_{C^\al}=\lim_{t\to0}\|(\De_0-\De_t)\dot{u}\|_{C^\al}=0.
\e 
By \eq{De_tu_t0}--\eq{De_tu_t3} we have $\lim_{t\to0}\displaystyle\Bigl\|\De_t\Bigl(\frac{u_t-u_0}{t}-\dot{u}\Bigr)\Bigr\|_{C^\al}=0.$ By Corollary \ref{cor: Schaud} therefore $\lim_{t\to0}\displaystyle\Bigl\|\frac{u_t-u_0}{t}-\dot{u}\Bigr\|_{C^{2,\al}}=0.$ So $\frac{\d u_t}{\d t}|_{t=0}$ exists. 

Now for the general $d\ge2$ the same proof shows that the partial derivatives $\frac{\partial u_t}{\partial t_1}|_{t=0},\dots,\frac{\partial u_t}{\partial t_d}|_{t=0}$ exist. Letting $s\in\R^d$ and considering $u_{s+t}$ in place of $u_t$ we see that the partial derivative exist everywhere. Now 
\e\l{32}
\De_t(\partial_tu_t)=\partial_t(\De_tu_t)-(\partial_t \De_t)u_t
\e
and the right-hand side is $C^0$ with respect to $t.$ Applying \eq{3l} with $l=0$ and with $\partial_t u_t$ in place of $u_t$ we find that $\partial_t u_t$ is $C^0$ with respect to $t.$ So $u_t$ is $C^1$ with respect to $t$ as we have to prove. 
\end{proof}

\begin{proof}[Proof of \eq{3l} for $l\ge2$]
Suppose now that $q\ge2$ is an integer and that \eq{3l} holds for $l\le q-1.$ Let $\De_tu_t$ be $C^q$ with respect to $t.$ By hypothesis then $u_t$ is $C^{q-1}.$ Now 
\e\l{33}
\De_t\partial_t^{q-1}u_t=\partial_t^{q-1}(\De_tu_t)-\sum_{p=0}^{q-2}(\partial_t^{q-1-p} \De_t)\partial_t^{p}u_t.
\e
Since $u_t$ is $C^{q-1}$ it follows that the right-hand side of \eq{33} is $C^1$ with respect to $t.$ Applying \eq{3l} with $l=1$ and with $\partial_t^{q-1}u_t$ in place of $u_t$ we find that $\partial_t^{q-1}u_t$ is $C^1$ with respect to $t.$ So $u_t$ is $C^q$ with respect to $t.$
\end{proof}
The statement \eq{3l} implies Theorem \ref{3}.
\end{proof}

We come now to the main theorem of the present section. There is over $X$ a vector bundle $W:=T^*X\oplus (T^*X)^{\otimes2}.$ For $v\in C^\iy(X)$ write $Dv:=\nb v\oplus \nb^2v$ which is a section of $W.$ For each $t\in\R^d$ suppose given a smooth map $F_t:W\to W^*\otimes W^*$ which commutes with the bundle projections $W\to X$ and $W^*\ot W^*\to X.$ So for each $x\in X,$ if we denote by $W_x\sb W$ the fibre over $x$ then $F_t$ maps $W_x$ to $W_x^*\otimes W_x^*.$ Suppose that for every $x\in X$ and $w\in W_x,$ if we regard $F_t(w)$ as a bilinear form on $W_x$ then it is a symmetric bilinear form. Suppose also that the function $F:(t,w)\mapsto F_t(w)$ is a smooth map from $\R^d\times W$ to $W^*\otimes W^*.$ If $u\in C^\iy(\R^d\times X)$ then we define $F(Du):\R^d\times X\to W^*\otimes W^*$ by $ (t,x)\mapsto F_t(Du(x)).$

\begin{thm}\l{5}
Let $\Om\sb\R^d$ be a compact set. Then there exists $\ep>0$ for which the following holds. Let $u_0=0\in C^\iy(\R^d\times X)$ and let $(u_n\in C^\iy(\R^d\times X))_{n=1}^\iy$ be such that for $n=1,2,3,\dots$ and for every $t\in\R^d$ we have
\[ \int_X u_n|_{\{t\}\times X}{\rm dvol}^0=0,\;\;\;  \|u_n|_{\{t\}\times X}-u_{n-1}|_{\{t\}\times X}\|_{C^{2,\al}}\le \ep\Bigl(\frac12\Bigr)^{n}.\]
Let $(\la_n\in C^\iy(\R^d\times X))_{n=0}^\iy$ be such that  for $n=1,2,3,\dots$ we have
\e\l{Deu_n} \De u_{n}=\la_n+F(Du_{n-1} )\bullet(Du_{n-1})^{\otimes 2}\e
as functions on $\R^d\times X.$ Then 
\iz
\item[\bf(i)] there exists $B>0$ independent of $t,n$ and such that $\|u_n\|_{C^{3,\al}}\le B$ at every point of $\Om;$
\item[\bf(ii)] for $k=0,1,2,\dots$ there exists $B>0$ depending on $k$ but independent of $t,n$ and such that $\|u_n\|_{C^{k+2,\al}}\le B$ at every point of $\Om;$
\item[\bf(iii)] for $\et\in(\frac12,1)$ and $k=0,1,2,\dots$ there exists $C>0$ independent of $t,n$ and such that $\|u_n-u_{n+1}\|_{C^{k+2,\al}}\le C\et^n$ at every point of $\Om;$
\item[\bf(iv)] for $k,l=0,1,2,\dots$ there exists $B>0$ depending on $k,l$ but independent of $t,n$ and such that $\|\partial_t^lu_n\|_{C^{k+2,\al}}\le B$ at every point of $\Om;$ and
\item[\bf(v)] for $\et\in(\frac12,1)$ and $k,l=0,1,2,\dots$ there exists $C>0$ independent of $t,n$ and such that $\|\partial_t^l(u_n-u_{n+1})\|_{C^{k+2,\al}}\le C\et^n$ at every point of $\Om.$
\iz
\end{thm}
Part (iii) and (v) will not be strictly necessary in practice. The reader in a hurry can skip their proofs and proceed to Remark \ref{Ascoli}.
\begin{proof}[\bf Proof of Theorem \ref{5}]
We deal with many functions from $\R^d$ to $\R$ which we estimate only on $\Om.$ We introduce therefore the following notation: for $a,b:\R^d\to\R$ write $a\le b$ if $a(t)\le b(t)$ for every $t\in\Om.$

Let $x\in X.$ Restricting $F$ to the vector space $W_x$ we get a smooth function $W_x\to(W_x^*)^{\ot2}$ and differentiating this we get a smooth function $W_x\to (W^*)^{\ot 3}.$ Varying $x\in X$ we get a smooth map $W\to (W^*)^{\ot 3}$ which we denote by $F'.$ Again for $x\in X$ define a smooth map $G_x:W_x\to W_x^*$ by $G_x(w):=F'(w )\bullet w^{\ot2}+2 F(w )\bullet w$ for $w\in W_x.$ Varying $x\in X$ we get a smooth map $W\to W^*$ which we denote by $G.$ 

Let $A$ be so large as in Theorem \ref{thm: Schaud} and Corollary \ref{cor: Schaud}. Since $X,\Om$ are compact and since $G$ maps $0\in W_x$ to $0\in W_x^*$ we get $\ep>0$ such that for every $f\in C^\iy(X)$ with $\|f\|_{C^{2,\al}}\le \ep$ we have $\|G(Df)\|_{C^{\al}}\le \frac12A^{-1}.$ Suppose now that $u_n,\la_n$ are as in Theorem \ref{5}. We prove the statements (i)--(v). 

\begin{proof}[\bf Proof of Theorem \ref{5}(i)]
Let $\xi$ be a vector field on $X$ which is independent of $t,n.$ Applying $\nb_\xi$ to \eq{Deu_n} we find a smooth map $F_0:W\oplus W\to \R$ such that for every $n=1,2,3,\dots$ we have
\e\l{F_0}
\nb_\xi\De u_n=F_0(Du_n\oplus Du_{n-1}) +G(Du_{n-1} )\bullet\nb_\xi Du_{n-1}.
\e
(Here the $\nb_\xi\la_n$ term is absorbed into the $F_0$ term, using the equation $\la_n=\De u_{n}-F(Du_{n-1} )\bullet(Du_{n-1})^{\otimes 2}$). Since $X,\Om$ are compact and $(Du_n)_{n=0}^\iy$ $C^\al$ bounded we get a constant $B>0$ independent of $t,n$ and such that for every $n\ge 1$ we have $\|F_0(Du_n\oplus Du_{n-1})\|_{C^\al}\le B.$ This with $\|G(Du_n)\|_{C^{\al}}\le \frac12A^{-1}$ implies that 
\ea\l{i0}
\|\nb_\xi\De u_n\|_{C^\al}&\le B+\frac12A^{-1}\|\nb_\xi D u_{n-1}\|_{C^{\al}}
\ea
On the other hand, since the commutator $\De\nb_\xi-\nb_\xi\De$ is a second-order differential operator we get a constant $C>0$ independent of $t,n$ and such that for every $t\in\Om$ and $n\ge0$ we have
\e\l{i1}
\|\De\nb_\xi u_n\|_{C^{\al}}\le \|\nb_\xi\De u_n\|_{C^{\al}}+C\|u_n\|_{C^{2,\al}}.
\e
As the commutator $ D\nb_\xi-\nb_\xi D$ is also a second-order differential operator, making $C$ larger if we need, it follows that for every $t\in\Om$ and $n\ge0$ we have
\e\l{i2}
\|\nb_\xi D u_n\|_{C^{\al}}\le \| D\nb_\xi u_n\|_{C^{\al}}+C\|u_n\|_{C^{2,\al}}.
\e
Recall now from Theorem \ref{thm: Schaud} that $\|\nb_\xi u_n\|_{C^{2,\al}}\le A(\|\De\nb_\xi u_n\|_{C^{\al}}+\|\nb_\xi u_n\|_{C^{\al}}).$ Combining this with \eq{i2} and making $C$ larger if we need, we see that for every $n\ge0$ we have
\e\l{i3}
\|\nb_\xi D u_n\|_{C^{\al}}\le A\|\De\nb_\xi u_n\|_{C^{\al}}+C\|u_n\|_{C^{2,\al}}.
\e
Combining this with \eq{i1} and making $C$ larger if we need, we see that for every $n\ge0$ we have
\e\l{i4}
\|\nb_\xi D u_n\|_{C^{\al}}\le A\|\nb_\xi\De u_n\|_{C^{\al}}+C\|u_n\|_{C^{2,\al}}.
\e
Combining this with \eq{i0} and making $C$ larger if we need, we see that for every $n\ge0$ we have
\e\l{i5}
\|\nb_\xi D u_n\|_{C^{\al}}\le AB+\frac12\|\nb_\xi D u_{n-1}\|_{C^{\al}}+C\|u_n\|_{C^{2,\al}}.
\e
Since $\|u_n\|_{C^{2,\al}}$ is bounded we find that making $B$ larger if we need, we have
\e\l{i6}
\|\nb_\xi Du_n\|_{C^{\al}}\le \frac12 \|\nb_\xi Du_{n-1}\|_{C^{\al}}+B.
\e 
Using \eq{i6} repeatedly we see that for every $n\ge 1$ we have
\[\|\nb_\xi Du_n\|_{C^{\al}}\le B(1+\dots+(\ts\frac12)^{n-1})\le 2B.\] 
So the sequence $(\nb_\xi Du_n)_{n=0}^\iy$ is $C^{\al}$ bounded. But $\xi$ has been an arbitrary vector field on $X,$ and $X$ is compact. These imply that the full derivative $\nb Du_n=\nb^3u_n$ is $C^{\al}$ bounded as we have to prove.
\end{proof}
\begin{proof}[\bf Proof of Theorem \ref{5}(ii)]
Let $k\ge0$ be an integer such that the sequence $(u_n)_{n=0}^\iy$ is $C^{k+2,\al}$ bounded. We prove then that $(u_n)_{n=0}^\iy$ is $C^{k+3,\al}$ bounded. 

Let $\xi_0,\dots,\xi_k$ be vector fields on $X$ which are independent of $t,n.$ For $a=0,\dots,k$ denote by $\nb_a:=\nb_{\xi_a}$ the covariant derivative in the direction $\xi_a.$ Introduce a finite set
\e\l{P} P:=\bigsqcup_{\mu=1}^{k+1} \{(p_1,\dots,p_\mu)\in\{0,1,2,\dots\}^{\t\mu}:p_1+\dots+p_\mu\le k\}.\e
For $n\ge1$ put $z_n:=Du_n\op Du_{n-1}.$ Setting $\xi=\xi_k$ and applying $\nb_0\cdots\nb_{k-1}$ to both sides of \eq{F_0} we get, after straightforward computation, a finite family $(F_{p_1\dots p_\mu}:W\op W\to (W^*)^{p_1}\otimes\dots\otimes (W^*)^{p_\mu})_{(p_1,\dots,p_\mu)\in P}$ of smooth maps such that
\ea\l{ii0}
\!\!\!
\nb_0\cdots\nb_k\De u_n&=\sum_{(p_1,\dots,p_\mu)\in P}F_{p_1\dots p_\mu}(z_n)\bullet (\nb^{p_1}z_n\otimes\dots\otimes\nb^{p_\mu}z_n)\\
&+G(Du_{n-1})\bullet(\nb_0\cdots\nb_k Du_{n-1}).
\ea
Since $X,\Om$ are compact and $(z_n)_{n=1}^\iy$ $C^\al$ bounded we get a constant $B>0$ independent of $t,n$ and such that for every $n\ge 0$ and $(p_1,\dots,p_\mu)\in P$ we have 
$\|F_{p_1\dots p_\mu}(z_n)\|_{C^\al}\le B.$
Since $u_n$ is by hypothesis $C^{k+2,\al}$ bounded it follows that for $(p_1,\dots,p_\mu)\in P$ or more generally for $p_1,\dots,p_\mu\le k$ we have $\|\nb^{p_1}z_n\|_{C^\al}\le B,\dots,\|\nb^{p_\mu}z_n\|_{C^\al}\le B.$ So for $n\ge 1$ and $(p_1,\dots,p_\mu)\in P$ we have
\e\l{ii1}
\|F_{p_1\dots p_\mu}(z_n) (\nb^{p_1}z_n\otimes\dots\otimes\nb^{p_\mu}z_n)\|_{C^\al}\le B,
\e
making $B$ larger if we need. Combining \eq{ii0} and \eq{ii1} we see that for $n\ge 0$ we have
\e\l{ii2}
\|\nb_0\cdots\nb_k\De u_n\|_{C^\al}\le
B+\frac12A^{-1} \|\nb_0\cdots\nb_k Du_n\|_{C^\al}.
\e
Generalizing \eq{i4} we get a constant $C>0$ independent of $t,n$ and such that for every $n\ge0$ we have
\ea\l{ii3}
\|\nb_0\cdots\nb_k D u_n\|_{C^{\al}}\le A\|\nb_0\cdots\nb_k\De u_{n}\|_{C^{\al}}+C\|u_n\|_{C^{k+2,\al}}\\
\le A\|\nb_0\cdots\nb_k\De u_{n}\|_{C^{\al}}+B,
\ea
making $B$ larger if we need. Combining \eq{ii2} and \eq{ii3}, and making $B$ larger if we need, we see that for $n\ge 1$ we have
\e\l{ii4}
\|\nb_0\cdots\nb_k D u_n\|_{C^{\al}}\le B+\frac12\|\nb_0\cdots\nb_k Du_{n-1}\|_{C^\al}.
\e
Using \eq{ii4} repeatedly we see then that $\|\nb_0\cdots\nb_k  Du_n\|_{C^\al}$ is uniformly bounded with respect to $n.$ But $\xi_0,\dots,\xi_k$ have been arbitrary vector fields on $X,$ and $X$ is compact. These imply that the full derivative $\nb^{k+1} Du_n=\nb^{k+3}u_n$ is $C^\al$ bounded as we have to prove. 
\end{proof}
\begin{rmk}
We can in fact unify the proofs of Theorem \ref{5}(i),(ii). Notice that the equation \eq{ii0} makes sense for $k=0,$ for which the equation is exactly \eq{F_0}. Also the structures of the estimates in the two proofs are the same. 

It would be a matter of taste whether to give a single proof or split it into the two. We have chosen the latter because we need much less notation for the first part, the proof of Theorem \ref{5}(i). 
\end{rmk}

\begin{proof}[\bf Proof of Theorem \ref{5}(iii)]
We prove that for $q=0,1,2,\dots$ the following holds:
\begin{equation}\l{iii0}\parbox{10cm}{
for every $\et\in(\frac12,1)$ there exists $C>0$ independent of $t,n$ and such that 
for every $n\ge0$ we have $\|u_n-u_{n+1}\|_{C^{q+2,\al}}\le C\et^n.$ 
}\end{equation}
We do this by an induction on $q.$ Since $\|u_n-u_{n+1}\|_{C^{2,\al}}\le \ep(\frac12)^n$ we know already that \eq{iii0} holds for $q=0.$ Let $k\ge0$ be any integer and let \eq{iii0} hold for $q=k.$ 

Let $\xi_0,\dots,\xi_k$ be vector fields on $X$ which are independent of $t,n.$ For $a=0,\dots,k$ put $\nb_a:=\nb_{\xi_a}$ the covariant derivative in the direction $\xi_a.$ Let $P$ be as in \eq{P}. Let $(p_1,\dots,p_\mu)\in P.$ Since $F^{k+1}_{p_1\dots p_\mu}:W\op W\to (W^*)^{p_1}\otimes\dots\otimes (W^*)^{p_\mu}$ is smooth we get a constant $C>0$ independent of $t,n$ and such that if two sections $z,z':X\to W\op W$ are $C^{\al}$ close enough to the zero-section then $\|F_{p_1\dots p_\mu}(z)-F_{p_1\dots p_\mu}(z')\|_{C^\al}\le C\|z-z'\|_{C^\al}.$
Recalling that $(z_n)_{n=0}^\iy$ is $C^\al$ bounded and making $C$ larger if we need, we see that for every $n\ge 1$ we have 
\e\l{iii1}
\|F_{p_1\dots p_\mu}(z_n)-F_{p_1\dots p_\mu}(z_{n+1})\|_{C^\al}\le C\|z_n-z_{n+1}\|_{C^\al}.
\e
By hypothesis there exists $C>0$ independent of $t,n$ and such that for every $n\ge0$ we have $\|u_n-u_{n+1}\|_{C^{k+2,\al}}\le C(\frac12)^n.$ Recalling $z_n=Du_n\op Du_{n-1}$ and making $C$ larger if we need, we see that for every $n\ge0$ we have
\e\l{iii2}
\|z_n-z_{n+1}\|_{C^\al}\le \|u_n-u_{n+1}\|_{C^{2,\al}}+\|u_{n-1}-u_n\|_{C^{2,\al}}\le \frac C{2^n}.
\e
Since $p_1,\dots,p_\mu\le k$ it follows also that 
\begin{align*}
\max\{\|\nb^{p_1}z_n-\nb^{p_1}z_{n+1}\|_{C^\al},\dots,
\|\nb^{p_\mu}z_n-\nb^{p_\mu}z_{n+1}\|_{C^\al}\}
\le\|z_n-z_{n+1}\|_{C^{k,\al}}\\
\le
\|u_n-u_{n+1}\|_{C^{k+2,\al}}+\|u_{n-1}-u_n\|_{C^{k+2,\al}}
\le \frac C{2^n}.
\end{align*}
This with \eq{ii0}, \eq{iii1} and \eq{iii2} implies that making $C$ larger if we need, we have
\ea\l{iii3}
&\|F_{p_1\dots p_\mu}(z_n) (\nb^{p_1}z_n\otimes\dots\otimes\nb^{p_\mu}z_n)\\
&-F_{p_1\dots p_\mu}(z_{n+1}) (\nb^{p_1}z_{n+1}\otimes\dots\otimes\nb^{p_\mu}z_{n+1})\|_{C^\al}
\le \frac C{2^n}.
\ea
On the other hand, making $C$ larger if we need, it follows that for every $n\ge1$ we have
\[
\|G(Du_{n-1})-G(Du_{n})\|_{C^\al}\le C\|Du_{n-1}-Du_{n}\|_{C^{\al}}\le \frac{C^2}{2^n}. 
\]
Recalling that $\nb_0\cdots\nb_k Du_n$ is $C^{\al}$ bounded and making $C$ larger if we need, we get
$\|(G(Du_{n-1})-G(Du_{n}))\bullet (\nb_0\cdots\nb_k Du_n) \|_{C^\al}\le C(\frac12)^n$ and accordingly
\begin{align*}
\|G(Du_{n-1})\bullet(\nb_0\cdots\nb_k Du_{n-1})-
G(Du_{n})\bullet(\nb_0\cdots\nb_k Du_{n})\|_{C^\al}\\
\le \frac C{2^n}+ \|G(Du_n)\|_{C^\al} 
\|\nb_0\cdots\nb_k D(u_{n-1}-u_n)\|_{C^\al}\\
\le \frac C{2^n}+ \frac12 A^{-1}\|\nb_0\cdots\nb_k D(u_{n-1}-u_n)\|_{C^\al}.
\end{align*}
This with \eq{ii0} and \eq{iii3} that
\e\l{iii4}
\|\nb_0\cdots\nb_k\De( u_n-u_{n+1})\|_{C^\al}\le
\frac C{2^n}+ \|G(Du_n)\|_{C^\al} 
\|\nb_0\cdots\nb_k D(u_{n-1}-u_n)\|_{C^\al}.
\e
Generalizing \eq{i4} and making $C$ larger if we need, it follows that for every $n\ge0$ we have
\[
\|\nb_0\cdots\nb_k D (u_n-u_{n+1})\|_{C^{\al}}\le A\|\nb_0\cdots\nb_k\De(u_n-u_{n+1})\|_{C^{\al}}+C\|u_n-u_{n+1}\|_{C^{k+2,\al}}.
\]
Combining this with \eq{iii4} and making $C$ larger if we need, it follows that for every $n\ge1$ we have
\[
\|\nb_0\cdots\nb_k D (u_n-u_{n+1})\|_{C^{\al}}\le \frac C{2^n}+\frac12
\|\nb_0\cdots\nb_k D(u_{n-1}-u_n)\|_{C^\al}.
\]
Doing an induction on $n$ and making $C$ larger if we need, we see that for every $n\ge0$ we have $\|\nb_0\cdots\nb_k D (u_n-u_{n+1})\|_{C^{\al}}\le Cn(\frac12)^n.$ Recalling $\frac12<\et$ and making $C$ larger if we need, we have $\|\nb_0\cdots\nb_k D (u_n-u_{n+1})\|_{C^{\al}}\le C\et^n.$ Since $\nb_0=\nb_{\xi_0},\dots,\nb_k=\nb_{\xi_k}$ with $\xi_0,\dots,\xi_k$ arbitrary it follows that making $C$ larger if we need, we have $\|\nb^{k+1} D(u_n-u_{n+1})\|_{C^{\al}}\le C\et_{k+1}^n.$ Thus $\|u_n-u_{n+1}\|_{C^{k+3,\al}}\le C\et_{k+1}^n;$ that is, \eq{iii0} holds for $q=k+1.$ This completes the proof of Theorem \ref{5}(iii).
\end{proof}

\begin{proof}[\bf Proof of Theorem \ref{5}(iv)]
Theorem \ref{5}(iv) claims that for every $p,q=0,1,2,\dots$ the following holds: 
\begin{equation}\l{iv0}\parbox{10cm}{
there exists $B>0$ independent of $t,n$ and such that for every $n\ge0$ we have $\|\partial_t^q u_n\|_{C^{p+2,\al}}\le B.$
}\end{equation}
We prove this by a double induction with $q$ an outer parameter and $p$ an inner parameter. Theorem \ref{5}(ii) implies already that \eq{iv0} holds for $q=0.$ Let $l\ge0$ be an integer and let \eq{iv0} hold for $p\ge0,q\le l.$ We show by the inner induction that \eq{iv0} holds for $p\ge0,q=l+1.$

We show first that \eq{iv0} holds for $p=0,q=l+1.$ Applying $\partial_t$ to both sides of \eq{Deu_n} we get a smooth map $H_0:W\op W\to \R$ such that
\e\l{iv1} \partial_t\De u_{n}=H_0(z_n)+G(Du_{n-1})\bullet \partial_tDu_{n-1}.\e
Let $P$ be as in \eq{P} with $l$ in place of $k.$ Applying $\partial_t^l$ to both sides of \eq{iv1} we get a finite family $(H_{p_1\dots p_\mu}: W\op W\to (W^*)^{p_0}\otimes\dots\otimes(W^*)^{p_\mu})_{(p_1,\dots,p_\mu)\in P}$ of smooth maps and such that 
\e\l{iv2} \partial_t^{l+1}\De u_{n}=\sum_{(p_1,\dots,p_\mu)\in P}H_{p_1\dots p_\mu}(z_n)\bullet (\partial_t^{p_1}z_n\otimes\dots\otimes \partial_t^{p_\mu}z_n)+G(Du_{n-1})\bullet \partial_t^{l+1}Du_{n-1}.
\e
Let $(p_1,\dots,p_\mu)\in P.$ Since $X,\Om$ are compact and $z_n$ $C^\al$ bounded we get a constant $B>0$ independent of $t,n$ and such that for every $n\ge 0$ we have $\|H_{p_1\dots p_\mu}(z_n)\|_{C^{\al}}\le B.$ By the definition of $P$ we have $p_1,\dots,p_\mu\le l$ and so by hypothesis, making $B$ larger if we need, it follows that for every $n\ge0$ we have $\|\partial_t^{p_1}z_n\|_{C^{\al}}\le B,\dots,
\|\nb^k\partial_t^{p_\mu}z_n\|_{C^{\al}}\le B.$ These imply that for every $n\ge0$ we have
$\|H_{p_1\dots p_\mu}(z_n)\bullet (\partial_t^{p_1}z_n\otimes\dots\otimes \partial_t^{p_\mu}z_n)\|_{C^{\al}}\le B.$ Using \eq{iv2}, recalling that $P$ is finite and making $B$ larger if we need, it follows that for every $n\ge1$ we have
\e\l{iv3}
\|\De \partial_t^{l+1}  u_{n}\|_{C^{\al}}\le B+\frac12A^{-1} \|\partial_t^{l+1} Du_{n-1}\|_{C^{\al}}.
\e
On the other hand, computing the commutator $\partial_t^{l+1}D-D\partial_t^{l+1}$ we get a constant $C>0$ independent of $t,n$ and such that for every $n\ge1$ we have
\ea\l{iv4}
\|\partial_t^{l+1} Du_{n-1}\|_{C^{\al}}\le \|D\partial_t^{l+1} u_{n-1}\|_{C^{\al}}+C\sum_{q=0}^l\|\partial_t^q u_{n-1}\|_{C^{2,\al}}\\
\le \|\partial_t^{l+1} u_{n-1}\|_{C^{2,\al}}+B,
\ea
using the outer induction hypothesis and making $B$ larger if we need. Since $\int_X u_n{\rm dvol}=0$ it follows that $\int_X \partial_t^lu_n{\rm dvol}=0.$ Corollary \ref{cor: Schaud} implies therefore that $\|\partial_t^{l+1}u_n\|_{C^{2,\al}}\le A\|\De\partial_t^{l+1}u_n\|_{C^{\al}}.$ Combining this with \eq{iv3} and \eq{iv4}, and making $B$ larger if we need, we see that for every $n\ge1$ we have
\[
\|\partial_t^{l+1} u_n\|_{C^{2,\al}}\le B+\frac12\|\partial_t^{l+1}u_{n-1}\|_{C^{2,\al}}.
\]
So $\|\partial_t^{l+1}u_{n}\|_{C^{2,\al}}$ is bounded with respect to $n.$ Thus \eq{iv0} holds for $p=0,q=l+1.$

Suppose next that $k\ge0$ is an integer and that \eq{iv0} holds for $p\le k,q\le l+1.$ Let $\xi_0,\dots,\xi_k$ be vector fields on $X$ which are independent of $t,n.$ Applying $\nb_0\cdots\nb_k$ to both sides of \eq{iv2} we get a finite family $(E_{p_1\dots p_\mu}: W\op W\to (TX)^{\ot \mu(k+1)}\otimes(W^*)^{p_0}\otimes\dots\otimes(W^*)^{p_\mu})_{(p_1,\dots,p_\mu)\in P}$ of smooth maps such that
\ea\l{iv5} \De \nb_0\cdots\nb_k\partial_t^{l+1} u_{n}=\sum_{(p_1,\dots,p_\mu)\in P}E_{p_1\dots p_\mu}(z_n)\bullet (\nb^{k+1}\partial_t^{p_1}z_n\otimes\dots\otimes \nb^{k+1}\partial_t^{p_\mu}z_n)\\
+G(Du_{n-1})\bullet \nb_0\cdots\nb_k\partial_t^{l+1}Du_{n-1}.
\ea
Let $(p_1,\dots,p_\mu)\in P.$ Since $X,\Om$ are compact and $z_n$ $C^\al$ bounded we get a constant $B>0$ independent of $t,n$ and such that for every $n\ge 0$ we have $\|E_{p_1\dots p_\mu}(z_n)\|_{C^{\al}}\le B.$ Making $B$ larger if we need, it follows that for every $n\ge0$ we have \[\|\nb^{k+1}\partial_t^{p_1}z_n\|_{C^{\al}}\le B,\dots,
\|\nb^{k+1}\partial_t^{p_\mu}z_n\|_{C^{\al}}\le B.\]
 These imply that for every $n\ge0$ we have
$\|E_{p_1\dots p_\mu}(z_n)\bullet (\nb^{k+1}\partial_t^{p_1}z_n\otimes\dots\otimes \nb^{k+1}\partial_t^{p_\mu}z_n)\|_{C^{\al}}\le B.$ Using \eq{iv5}, recalling that $P$ is finite and making $B$ larger if we need, it follows that for every $n\ge1$ we have
\e\l{iv6}
\|\De \nb_0\cdots\nb_k\partial_t^{l+1}  u_{n}\|_{C^{\al}}\le B+\frac12 A^{-1} \|\nb_0\cdots\nb_k\partial_t^{l+1} Du_{n-1}\|_{C^{\al}}.
\e
On the other hand, computing the commutator $\nb_0\cdots\nb_k\partial_t^{l+1}D-D\nb_0\cdots\nb_k\partial_t^{l+1}$ we get a constant $C>0$ independent of $t,n$ and such that for every $n\ge1$ we have
\ea\l{iv7}
\|\nb_0\cdots\nb_k\partial_t^{l+1} Du_n\|_{C^{\al}}\le \|D\nb_0\cdots\nb_k \partial_t^{l+1} u_n\|_{C^{\al}}+C\sum_{p+q\le 1+k+l}\|\partial_t^q u_n\|_{C^{p+2,\al}}\\
\le \|\nb_0\cdots\nb_k \partial_t^{l+1} u_n\|_{C^{2,\al}}+B,
\ea
using the outer and inner induction hypotheses and making $B$ larger if we need.
Recall now from Theorem \ref{thm: Schaud} that
\ea\l{iv8}
\|\nb_0\cdots\nb_k\partial_t^{l+1}u_n\|_{C^{2,\al}}\le A(\|\De \nb_0\cdots\nb_k\partial_t^{l+1}u_n\|_{C^{\al}}
+\|\nb_0\cdots\nb_k\partial_t^{l+1}u_n\|_{C^0})\\
\le A\|\De \nb_0\cdots\nb_k\partial_t^{l+1}u_n\|_{C^{\al}}
+B,
\ea
using the inner induction hypothesis and making $B$ larger if we need. Combining \eq{iv6}--\eq{iv8} and making $B$ larger if we need, we see that for every $n\ge1$ we have
\[
\|\nb_0\cdots\nb_k\partial_t^{l+1} u_n\|_{C^{2,\al}}\le B+\frac12\|\partial_t^{l+1}\nb_0\cdots\nb_k u_n\|_{C^{2,\al}}.
\]
So $\|\nb_0\cdots\nb_k\partial_t^{l+1} u_{n}\|_{C^{2,\al}}$ is bounded with respect to $n.$ As $\xi_0,\dots,\xi_k$ has been arbitrary vector fields this implies that the full derivative $\nb^{k+1}\partial_t^{l+1}u_n$ is $C^{2,\al}$ bounded with respect to $n.$ Thus $\partial_t^{l+1}u_n$ is $C^{k+3,\al}$ bounded with respect to $n;$ that is, \eq{iv0} holds for $p=k+1,q=l+1.$ This completes the inner induction and \eq{iv0} holds for $p\ge0,q=l+1.$ The latter completes the outer induction and \eq{iv0} holds for every $p,q\ge0.$ 
\end{proof}

\begin{proof}[\bf Proof of Theorem \ref{5}(v)]
We prove that for $p,q=0,1,2,\dots$ the following holds:
\begin{equation}\l{v-1}\parbox{10cm}{
for every $\ze\in(\frac12,1)$ there exists $C>0$ independent of $t,n$ and such that for every $n\ge0$ we have $\|\partial_t^q( u_n-u_{n+1})\|_{C^{p+2,\al}}\le C\ze^n.$ 
}\end{equation}
We do this by a double induction with $q$ an outer parameter and $p$ an inner parameter. Theorem \ref{5}(iii) implies already that \eq{v-1} holds for $q=0.$ Let $l\ge0$ be an integer and let \eq{v-1} hold for $p\ge0,q\le l.$ We show by the inner induction that \eq{v-1} holds for $p\ge0,q=l+1.$

We show first that \eq{v0} holds for $p=0,q=l+1.$ Fix $\ze,\et$ with $\frac12<\et<\ze<1.$ Let $P$ and $(H_{p_1\dots p_\mu}: W\op W\to (W^*)^{p_0}\otimes\dots\otimes(W^*)^{p_\mu})_{(p_1,\dots,p_\mu)\in P}$ be as in the proof of Theorem \ref{5}(iv). Let $(p_1,\dots,p_\mu)\in P.$ Since $X,\Om$ are compact and $(z_n)_{n=0}^\iy$ $C^\al$ bounded we get a constant $C>0$ independent of $t,n$ and such that $n\ge1$ we have
\ea\l{v0}
\|H_{p_1\dots p_\mu}(z_n)-H_{p_1\dots p_\mu}(z_{n+1})\|_{C^{\al}}\le C\|z_n-z_{n+1}\|_{C^{\al}}\\
\le C\|Du_n-Du_{n+1}\|_{C^{\al}}+C\|Du_{n-1}-Du_{n}\|_{C^{\al}} 
\le 2C^2\et^n,
\ea
using the outer induction hypothesis and making $C$ larger if we need.
Since $p_1,\dots,p_\mu\le l$ it follows by the outer induction hypothesis that   
$\|\partial_t^{p_1}(z_n-z_{n+1})\|_{C^{\al}}\le C\et^n,\dots,
\|\partial_t^{p_\mu}(z_n-z_{n+1})\|_{C^{\al}}\le C\et^n.$ Combining these with \eq{v0} and making $C$ larger if we need, we find that for every $(p_1,\dots,p_\mu)\in P$ we have
\ea\l{v1}
&\|H_{p_1\dots p_\mu}(z_n)\bullet (\partial_t^{p_1}z_n\otimes\dots\otimes \partial_t^{p_\mu}z_n)\\
&-H_{p_1\dots p_\mu}(z_{n+1})\bullet (\partial_t^{p_1}z_{n+1}\otimes\dots\otimes \partial_t^{p_\mu}z_{n+1})\|_{C^{\al}}\le C\et^n.
\ea
Making $C$ larger if we need, it follows also that for every $n\ge 1$ we have
\e\l{v2}
\|G(Du_{n-1})-G(Du_n)\|_{C^{\al}}\le C\|Du_{n-1}-Du_n\|_{C^{\al}}\le C\et^n.
\e
Using \eq{v0}, recalling that $P$ is finite and making $C$ larger if we need, it follows that for every $n\ge1$ we have
\e\l{v3}
\|\De \partial_t^{l+1}  (u_{n}-u_{n+1}\|_{C^{\al}}\le C\et^n+\frac12A^{-1} \|\partial_t^{l+1} D(u_{n-1}-u_n)\|_{C^{\al}}.
\e
On the other hand, computing the commutator $\partial_t^{l+1}D-D\partial_t^{l+1}$ we get a constant $C>0$ independent of $t,n$ and such that for every $n\ge1$ we have
\ea\l{v4}
\|\partial_t^{l+1} D(u_{n-1}-u_n)\|_{C^{\al}}\le \|D\partial_t^{l+1} (u_{n-1}-u_n)\|_{C^{\al}}+C\sum_{q=0}^l\|\partial_t^q (u_{n-1}-u_n)\|_{C^{2,\al}}\\
\le \|\partial_t^{l+1} (u_{n-1}-u_n)\|_{C^{2,\al}}+C^2\et^n,
\ea
using the outer induction hypothesis and making $C$ larger if we need. Corollary \ref{cor: Schaud} implies now that $\|\partial_t^{l+1}(u_n-u_{n+1})\|_{C^{2,\al}}\le A\|\De\partial_t^{l+1}(u_n-u_{n+1})\|_{C^{\al}}.$ Combining this with \eq{v3} and \eq{v4}, and making $C$ larger if we need, we see that for every $n\ge1$ we have
\[
\|\partial_t^{l+1} (u_n-u_{n+1})\|_{C^{2,\al}}\le C\et^n+\frac12\|\partial_t^{l+1}(u_n-u_{n-1})\|_{C^{2,\al}}.
\]
Doing an induction on $n$ and making $C$ larger if we need, we find that for every $n\ge1$ we have $\|\partial_t^{l+1}u_{n}\|_{C^{2,\al}}\le Cn\et^n.$ Recalling $\ze>\et$ and making $C$ larger if we need, we see that for every $n\ge1$ we have  $\|\partial_t^{l+1}u_{n}\|_{C^{2,\al}}\le C\ze^n.$ Thus \eq{v-1} holds for $p=0,q=l+1.$

Suppose next that $k\ge0$ is an integer and that \eq{v-1} holds for $p\le k,q\le l+1.$ Fix again $\ze,\et$ with $\ep<\et<\ze<1.$ Let $\xi_0,\dots,\xi_k$ be vector fields on $X$ which are independent of $t,n.$ Let $(p_1,\dots,p_\mu)\in P$ and let $E_{p_1\dots p_\mu}$ be as in \eq{iv5}. Since $X,\Om$ are compact and $u_n$ $C^\al$ bounded we get a constant $C>0$ independent of $t,n$ and such that for every $n\ge 0$ we have
\e\l{v5}
\|E_{p_1\dots p_\mu}(z_n)-E_{p_1\dots p_\mu}(z_{n+1})\|_{C^{\al}}\le C\|z_n-z_{n+1}\|_{C^\al} \le C^2\et^n,
\e
using the outer induction hypothesis and making $C$ larger if we need. Recalling $p_1,\dots,p_\mu\le l$ and making $C$ larger if we need, it follows that for every $n\ge0$ we have
\e\l{v6}
\|\nb^{k+1}\partial_t^{p_1}(z_n-z_{n+1})\|_{C^{\al}}\le C\et^n,\dots,
\|\nb^{k+1}\partial_t^{p_\mu}(z_n-z_{n+1})\|_{C^{\al}}\le C\et^n.
\e
These imply that for every $n\ge0$ we have
\ea\l{v7}
&\|E_{p_1\dots p_\mu}(z_n)\bullet (\nb^{k+1}\partial_t^{p_1}z_n\otimes\dots\otimes \nb^{k+1}\partial_t^{p_\mu}z_n)\\
&-E_{p_1\dots p_\mu}(z_{n+1})\bullet (\nb^{k+1}\partial_t^{p_1}z_n\otimes\dots\otimes \nb^{k+1}\partial_t^{p_\mu}z_{n+1})
\|_{C^{\al}}\le C\et^n.
\ea
Combining \eq{iv5} with \eq{v5}--\eq{v7}, recalling that $P$ is finite and making $C$ larger if we need, it follows that for every $n\ge1$ we have
\e\l{v8}
\|\De \nb_0\cdots\nb_k\partial_t^{l+1} ( u_{n}-u_{n+1})\|_{C^{\al}}\le C\et^n+A^{-1}\frac12 \|\nb_0\cdots\nb_k\partial_t^{l+1} D(u_{n-1}-u_n)\|_{C^{\al}}.
\e
On the other hand, computing the commutator $\nb_0\cdots\nb_k\partial_t^{l+1}D-D\nb_0\cdots\nb_k\partial_t^{l+1}$ we get a constant $C>0$ independent of $t,n$ and such that for every $n\ge1$ we have
\ea\l{v9}
&\|\nb_0\cdots\nb_k\partial_t^{l+1} D(u_{n-1}-u_{n})\|_{C^{\al}}\\
&\le \|D\nb_0\cdots\nb_k \partial_t^{l+1} (u_{n-1}-u_{n})\|_{C^{\al}}+C\sum_{p+q\le 1+k+l} \|\partial_t^q (u_{n-1}-u_{n})\|_{C^{p+2,\al}}\\
&\le \|\nb_0\cdots\nb_k \partial_t^{l+1} (u_{n-1}-u_{n})\|_{C^{2,\al}}+C\et^n,
\ea
using the outer and inner induction hypotheses, and making $C$ larger if we need.
Recall now from Theorem \ref{thm: Schaud} that
\ea\l{v10}
&\|\nb_0\cdots\nb_k\partial_t^{l+1}(u_{n-1}-u_{n})\|_{C^{2,\al}}\\
&\le A(\|\De \nb_0\cdots\nb_k\partial_t^{l+1}(u_{n-1}-u_{n})\|_{C^{\al}}
+\|\nb_0\cdots\nb_k\partial_t^{l+1}(u_{n-1}-u_{n})\|_{C^0})\\
&\le A\|\De \nb_0\cdots\nb_k\partial_t^{l+1}(u_{n-1}-u_{n})\|_{C^{\al}}+C\et^n,
\ea
using the inner induction hypothesis and making $C$ larger if we need. Combining \eq{v8}--\eq{v10} and making $C$ larger if we need, we see that for every $n\ge1$ we have
\[
\|\nb_0\cdots\nb_k\partial_t^{l+1} (u_n-u_{n+1})\|_{C^{2,\al}}\le C\et^n+\frac12\|\partial_t^{l+1}\nb_0\cdots\nb_k (u_{n-1}-u_{n})\|_{C^{2,\al}}.
\]
Doing an induction on $n$ and making $C$ larger if we need, we see that for every $n\ge0$ we have $\|\nb_0\cdots\nb_k\partial_t^{l+1} (u_n-u_{n+1})\|_{C^{2,\al}}\le Cn\et^n.$ Recalling $\ze>\et$ and making $C$ larger if we need, we find that for every $n\ge0$ we have $\|\nb_0\cdots\nb_k\partial_t^{l+1} (u_n-u_{n+1})\|_{C^{2,\al}}\le C\ze^n.$ Noting that $\xi_0,\dots,\xi_k$ has been arbitrary vector fields and making $C$ larger if we need, we see that the full derivative $\nb^{k+1}\partial_t^{l+1}u_n$ satisfies the estimate $\|\nb^{k+1}\partial_t^{l+1} (u_n-u_{n+1})\|_{C^{2,\al}}\le C\ze^n.$ Thus $\|\partial_t^{l+1} (u_n-u_{n+1})\|_{C^{k+3,\al}}\le C\ze^n;$ that is, \eq{v-1} holds for $p=k+1,q=l+1.$ This completes the inner induction and \eq{v-1} holds for $p\ge0,q=l+1.$ The latter completes the outer induction and \eq{v-1} holds for every $p,q\ge0.$ 
\end{proof}

This completes the proof of Theorem \ref{5}.
\end{proof}

The following is an immediate consequence of Theorem \ref{5}.
\begin{cor}\l{cor 5}
Let $\Om\sb\R^d$ be a compact set whose interior $\Om^\cm$ is non-empty. 
Let $\ep>0,(\la_n\in C^\iy(\R^d\times X))_{n=0}^\iy$ and $(u_n\in C^\iy(\R^d\times X))_{n=0}^\iy$ be as in Theorem \ref{5}. Then for every $(t,x)\in\Om\times X$ the limit $u_\iy(t,x)=\lim_{n\to\iy}u_n(t,x)$ exists and defines a $C^\iy$ function from $\Om^\cm\times X$ to $\R.$
\end{cor}
\begin{proof}
The hypotheses of Theorem \ref{5} imply that the pointwise limit $u_\iy(t,x)=\lim_{n\to\iy}u_n(t,x)$ exists for $(t,x)\in\Om\times X$ and defines a $C^0$ function from $\Om\times X$ to $\R.$ We show that for $q=0,1,2,\dots$ the following holds:
\e\l{c50}\parbox{10cm}{
the $t$-derivative $\partial_t^{l}u_\iy:\Om^\cm\times X\to\R$ exists and the sequence $(\partial_t^qu_n)_{n=0}^\iy$ converges uniformly on $\Om^\cm\times X$ to $\partial_t^qu_\iy.$
}\e
We know already that \eq{c50} holds for $q=0.$ Let $l\ge1$ and let \eq{c50} hold for $q=l-1.$ By Theorem \ref{5}(v) the sequence $(\partial_t^lu_n)_{n=0}^\iy$ converges uniformly. So $\partial_t^lu_\iy$ exists and is equal to the limit $\lim_{n\to\iy} \partial_t^lu_n.$ Thus \eq{c50} holds for $q=l$ and the induction is complete. 

Fix $l\in\{0,1,2,\dots\}.$ We show that for $q=0,1,2,\dots$ the following holds:
\e\l{c51}\parbox{10cm}{
$\nb^q\partial_t^{l}u_\iy:\Om^\cm\times X\to\R$ exists and the sequence $(\nb^q\partial_t^lu_n)_{n=0}^\iy$ converges uniformly on $\Om^\cm\times X$ to $\nb^q\partial_t^lu_\iy.$
}\e
We know already that \eq{c51} holds for $q=0.$ Let $k\ge1$ and let \eq{c51} hold for $q=k-1.$ Theorem \ref{5}(v) implies that the sequence $(\nb^k\partial_t^lu_n)_{n=0}^\iy$ converges uniformly. So $\nb^k\partial_t^lu_\iy$ exists and is equal to the limit $\lim_{n\to\iy} \nb^k\partial_t^lu_n.$ Thus \eq{c51} holds for $q=k$ and the induction is complete. In particular, the derivative $\nb^k\partial_t^{l}u_\iy:\Om^\cm\times X\to\R$ exists and is $C^0$ for every $k,l\ge0$ as we have to prove.
\end{proof}
\begin{rmk}\label{Ascoli}
Write $\Om^\cm$ as a countable union of compact subsets. By Theorem \ref{5}(iv) we can apply Arzel\`a--Ascoli's theorem to each of these compact subsets. By the diagonal argument therefore there is a subsequence of $(u_n)_{n=0}^\iy$ which converges in the $C^\iy$ sense on every compact subset of $\Om^\cm.$ This will be sufficient for the application in \S\ref{sect11}.

We have proved Theorem \ref{5}(iii),(v) for the sake of completeness. Their advantage is that we do not have to take subsequences as above.
\end{rmk}

\section{Proof of Theorem \ref{smooth dependence}} \l{sect11}
In this section we prove that the first approximate solutions produced in \S\ref{app sec}, the second approximate solutions produced in \S\ref{pert4} and the true solutions produced in \S\ref{Proof of 1} depend smoothly on $s,t.$ These three families of solutions are all of the form $(\io^{st}:N\to M)_{(s,t)\in\cU}$ with $\cU\sb\R^n\times(0,\iy)$ an open subset. Each $\io^{st}$ is a smooth map between the manifolds $N,M$ which are independent of $s,t.$ 

Note that as $\cU\sb\R^n\times(0,\iy)$ is an open set, its generic points do not satisfy the conditions (i),(ii) of Theorem \ref{obst thm}. This means that if $(s,t)$ is generic then the smooth map $\io^{st}:N\to M$ has no particular geometric meaning. In other words, $\cU$ is an auxiliary space which we use only to prove that $\io^{st}$ depends smoothly on $s,t.$

It is clear from the construction of \S\ref{app sec} that the first approximate solutions depend smoothly on $s,t.$ As we have just mentioned, we no longer require $(s,t)$ to satisfy the condition (i) in Theorem \ref{obst thm}; and accordingly, the corresponding map $\io^{st}:N\to (M,\om^s)$ need no longer be a Lagrangian embedding.

As in \S\ref{pert2} choose the constant $\th>2-2\nu.$ As in Corollary \ref{sol cor}, for each $(\bar s,\bar t)\in\cG$ with $|\bar s|<\bar t^\th$ we perturb the first approximate solution $\io^{\bar s\bar t}:N\to M$ to the second approximate solution. Choose an open neighbourhood $\cU\in \R^d\times(0,\iy)$ of $(\bar s,\bar t)\in\cG$ and apply the proof of Corollary \ref{sol cor} to every $(s,t)\in\cU.$ Then the second approximate solutions are parametrized by $(s,t)$ close enough to $(\bar s,\bar t).$ We prove
\begin{prop}\l{prop 11.1}
The second approximate solutions depend smoothly on $(s,t)\in\cU.$
\end{prop}
\begin{proof}
As we have just mentioned, we apply the proof of Corollary \ref{sol cor} to $(s,t)\in\cU.$ So there exists a corresponding sequence $(v_n\in C^{1/2}_{\nu-2}(N,\R))_{n=0}^\iy$ with each $v_n$ depending on $(s,t)\in\cU.$ We do not know yet whether this is smooth with respect to $(s,t)\in\cU.$ The sequence $(v_n)_{n=0}^\iy$ satisfies $v_0=0$ and for $n\ge1$ the recursive equation $v_n=-P0-Q\d Rv_{n-1}.$ Here $P,Q$ and $R$ also depend on $(s,t)\in\cU.$ But these are made from the geometric data and therefore smooth with respect to $(s,t)\in\cU.$

As in Corollary \ref{sol cor}, for $n=0,1,2,\dots$ define $u_n\op w_n\in C^{5/2}_\nu(N,\R)\op H^0(X,\R)$ by $u_n\op w_n:=(R\op\pi)v_n.$ Put $\la_n:=-P0-Ew_n.$ Then for $n\ge1$ the equation $v_n=-P0-Q\d Rv_{n-1}$ implies
\e\l{De u_n}\De u_n=\la_n-Q\d u_{n-1}.\e 
We prove by induction on $n$ that $u_n\in C^\iy(N,\R).$ For $n=0,$ as $v_0=0$ we have $u_0=0\in C^\iy(N,\R).$ For $n\ge1,$ if $u_{n-1}\in C^\iy(N,\R)$ then applying the elliptic regularity theorem to \eq{De u_n} we find that $u_n\in C^\iy(N,\R).$

For each $(s,t)\in\cU$ the sequence $(v_n)_{n=0}^\iy$ converges in $C^{1/2}_{\nu-2}(N,\R)$ and so the sequence $(u_n)_{n=0}^\iy$ converges in $C^{1/2}_{\nu-2}(N,\R).$ Moreover, since $Q$ is the nearly quadratic term of the Taylor expansion at $0$ of $P$ it follows that \eq{De u_n} may be rewritten in the form \eq{Deu_n}. 

By Theorem \ref{3}, if we vary the parameter $(s,t)\in\cU$ then every $u_n$ defines a $C^\iy$ function from $\cU\times X$ to $\R.$ (We think of $(s,t)$ as a single parameter $t$ when we apply the results of \S\ref{sect10}.) We can therefore apply Corollary \ref{cor 5} to the sequence $(u_n)_{n=0}^\iy.$ As a result of this, the limit $u:=\lim_{n\to\iy}u_n$ is $C^\iy$ with respect to $s,t.$

Since $(v_n)_{n=0}^\iy$ converges in $C^{1/2}_{\nu-2}(N,\R)$ it follows also that $(w_n)_{n=0}^\iy$ converges in $H^0(X,\R).$ As in the proof of Corollary \ref{sol cor} the limit $w=\lim_{n\to\iy}w_n$ satisfies the equation $P\d u+Ew=0.$ By the definition of $E,$ if we write $w=(w_Y\in\R)_{Y\in \pi_0X}$ then $Ew=\sum_{Y\in\pi_0 X} E_Yw_Y$ where each $E_Y:Y\to\R$ is a compactly supported smooth function independent of $s,t.$ Since $P\d u$ is already $C^\iy$ with respect $s,t$ it follows that so is $(w_Y\in\R)_{Y\in \pi_0X}.$
\end{proof}



We make now a version of Theorem \ref{5.3} which applies to the second approximate solutions parametrized by the open subset of the parameter space $\R^d\times(0,\iy).$ The major change we need is about the condition (vi) of Theorem \ref{5.3}. This part corresponds to the condition $(s,t)\in\cG$ when we apply Theorem \ref{5.3} to the second approximate solutions. But now the second approximate solutions are parametrized by the larger open subset and the condition $(s,t)\in\cG$ will not always hold. We introduce therefore an auxiliary term $\Xi$ as in the following statement.

\begin{thm}\l{thm 11.2}
For every $m\in\{3,4,5,\dots\},c>0$ and $\ep>0$ there exists $t_1>0$ such that if $t\in(0,t_1)$ then the following holds. Let $(M;\om,J,\Om)$ and $\ps:M\to(0,\iy)$ be as in Theorem \ref{5.3}. Let $N\sb (M,\om)$ be a compact oriented submanifold; we do not suppose that $N$ is Lagrangian as in Theorem \ref{5.3} and nor do we suppose $\int_N\Im\Om=0.$  Let $g$ be as in Theorem \ref{5.3} and suppose that 
\iz
\item[\bf(a)] the condition {\rm (i)} of Theorem \ref{5.3} holds with $2c$ in place of $c.$ 
\iz
Let $U\sb M$ be as in Theorem \ref{5.3} except that it is no longer a Weinstein neighbourhood in the ordinary sense. Suppose that
\iz\item[\bf(b)]
the condition {\rm (ii)} of Theorem \ref{5.3} holds with $2c$ in place of $c.$ 
\iz
Let the Sobolev spaces and the finite-dimensional space $W$ be as in Theorem \ref{5.3}. Suppose that
\iz
\item[\bf(c)] 
the conditions {\rm (iii)--(v)} of Theorem \ref{5.3} hold with $2c$ in place of $c.$ 
\iz
Let {\rm dvol} and $e^{i\Th}:N\to S^1$ be as in Theorem \ref{5.3}. Let $\Xi:N\to\R$ be a smooth function with $\int_{N}\Xi\,{\rm dvol}=\int_{N}\Im\Om$ and such that
\iz
\item[\bf(d)]
the condition {\rm (vi),(vii)} of Theorem \ref{5.3} hold with $2c$ in place of $c$ and with $(\ps^m\sin\Th)-\Xi$ in place of $\ps^m\sin\Th.$
\iz
Then there exists on $N$ an exact $1$-form $\xi$ whose graph lies in the neighbourhood $U\sb(M,\om)$ and such that $\Im\Om|_{\gr\xi}=\Xi{\rm dvol}$ under the graphical diffeomorphism $N\cong\gr\xi.$ There is moreover a constant $c'>0$ depending only on $m,c,\ep$ such that $\sup_{N}|\xi|\le c't^\ep$ where $|\,\,|$ is computed pointwise with respect to $g.$
\end{thm} 
\begin{proof}
We modify the proof of Theorem \ref{5.3} (which goes back to \cite[Theorem 5.3]{J3}). For a smooth function $\ph:N\to\R$ define $A\ph:N\to\R$ by $(A\ph){\rm dvol}:= \Im\Om|_{\gr\d\ph}-\Xi{\rm dvol}.$ We solve the equation $A\ph=0.$ Let $\De$ be the differential of $A$ at $\ph=0.$ Then $\De \ph=-\d^*((\ps^m\sin\Th)\ph),$ which defines a bijection from the set of $\ph$ with $\int_N\ph{\rm dvol}=0$ to the same set. Define $\ph\mapsto B\ph$ by $A\ph=A0+\De u+B\ph.$ Then $A\ph=0$ is equivalent to $\De \ph=-A0-B\ph.$ Since $\int_{N}\Xi\,{\rm dvol}=\int_{N}\Im\Om$ it follows that $\int_NA\ph\,{\rm dvol}=0.$ On the other hand, we have always $\int_N\De\ph\,{\rm dvol}=0$ and so $\int_N (A0+B\ph){\rm dvol}=0.$ We can therefore define the map $\ph\mapsto-\De^{-1}(A0+B\ph),$ to which we apply the contraction mapping theorem.

In the circumstances of Theorem \ref{5.3} we define the same operator $A$ without the $\Xi$ term, and $A0$ is then equal to $\ps^m\sin\Th.$ Theorem \ref{5.3}(i)--(v) imply that $B\ph$ satisfies the estimates we need for the contraction mapping theorem. Theorem \ref{5.3}(vi),(vii) imply that $\ps^m\sin\Th=A0$ satisfies the estimates we need for the contraction mapping theorem. The details are in the proof of \cite[Theorem 5.3]{J3}. 

In the current circumstances the conditions (a)--(c) imply that $B\ph$ satisfies the estimates we need for the contraction mapping theorem, and (d) implies that $(\ps^m\sin\Th)-\Xi=A0$ satisfies the estimates we need for the contraction mapping theorem. We can therefore use again the contraction mapping theorem, completing the proof. 
\end{proof}

As in Proposition \ref{prop 11.1} fix $(\bar s,\bar t)\in\cG$ and let $(s,t)\in\R^d\times(0,\iy)$ be close enough to $(\bar s,\bar t).$ Denote by $N^{st}\sb M$ the second approximate solution corresponding to the nearby $(s,t).$ By Proposition \ref{prop 11.1} this $N^{st}$ is smooth with respect to $s,t.$ 

As in the proof of Theorem \ref{1} we can suppose that the conditions (i)--(vii) of Theorem \ref{5.3} hold with $N^{\bar s\bar t}$ in place of $N.$ Then there exists a neighbourhood $\cU$ of $(\bar s,\bar t)\in\R^n\times(0,\iy)$ such that if $(s,t)\in\cU$ then the conditions (a)--(c) of Theorem \ref{thm 11.2} hold with $N^{st}$ in place of $N.$

For each connected component $Y\sb X $ let $E_Y:N^{st}\to\R$ be a compactly supported smooth function which is supported in the region diffeomorphic to a compact subset of $Y,$ and such that $\int_{N} E_Y{\rm dvol}=1.$ So $E_Y$ is essentially independent of $s,t.$ Define $\Xi^{st}:N^{st}\to \R$ by setting
\e\l{Xi}
\Xi^{st}:=\sum_{Y\in\pi_0X}\Bigl([\bar Y]\cdot [\Im\Om^s]+\sum_{C_{xj}\sb Y}t^m\ps^s(x)^m(1+\ga_x)[C_{xj}\cap S^{2m-1}]\cdot Z(L_x)\Bigr)E_Y.
\e  
Notice that for each $x\in X^\sing,$ if we denote by $\hat L_x\sb L_x$ the complement of ends then as $L_x$ is special Lagrangian, we have $\int_{\hat L_x}\Im\Om'=0.$ By the definition of $Z(L_x)$ this is equivalent to saying that $\sum_{C_{xj}\sb C_x}[C_{xj}\cap S^{2m-1}]\cdot Z(L_x)=0.$ If we vary $x\in X^\sing$ and take the sum over $x$ then we obtain the equality
\[\sum_{Y\in\pi_0X}\sum_{C_{xj}\sb Y}t^m\ps^s(x)^m(1+\ga_x)[C_{xj}\cap S^{2m-1}]\cdot Z(L_x)=0.\] 
Hence integrating \eq{Xi} we see that $\int_N\Xi\,{\rm dvol}= \int_N \Im\Om^s.$ So the equation before Theorem \ref{thm 11.2}(d) holds with $N^{st},\Xi^{st}$ in place of $N,\Xi$ respectively.  

We verify now that Theorem \ref{thm 11.2}(d) holds in the present context. We begin with the estimate concerning Theorem \ref{5.3}(vi). Going back to the proof of Joyce \cite[Theorem 7.9]{J4} and using \eq{Xi} we see that we can do the same computation as in the proof of \cite[Proposition 7.7]{J4}. Hence we get the estimate we want. 

We turn next to the estimate concerning Theorem \ref{5.3}(vii). Since $\Xi$ is continuous with respect to $s,t$ we get for $p=1,\iy$ a neighbourhood $\cU_p$ of $(\bar s,\bar t)\in\R^n\times(0,\iy)$ such that every $(s,t)\in \cU_p$ satisfies Theorem \ref{5.3}(vii) with this $p,$ with $2c$ in place of $c$ and with $(\ps^m\sin\Th)-\Xi$ in place of $\ps^m\sin\Th.$ For $( s, t)\in \cU_1\cap \cU_\iy$ the same statement holds for $p=1,\iy.$ The interpolation inequality implies then that it holds for every $p\in[1,\iy].$ 

We can thus apply Theorem \ref{thm 11.2} and so for $t$ small enough there exists an exact $1$-form $\xi$ with $\Im\Om|_{\gr\xi}=\Xi^{st}{\rm dvol}.$ Since Theorem \ref{thm 11.2} is a direct generalization of Theorem \ref{5.3} it follows that for $t$ small enough the current $1$-form $\xi$ is an extension of that in the proof of Theorem \ref{1}. In other words, the latter $1$-form extends to an open neighbourhood $\cU$ of $(s,t)\in\R^n\times(0,\iy);$ and accordingly, the special Lagrangian submanifolds produced in Theorem \ref{1} extend to a family of submanifolds parametrized by the same open set $\cU.$   

We show finally that these submanifolds depend smoothly on $(s, t)\in\cU.$ The equation we solve in the proof of Theorem \ref{thm 11.2} is of the form $\De u=-P0-Bu,$ and $Bu$ is the nearly quadratic term. This implies that the sequence of functions provided by the contraction mapping theorem satisfies the hypotheses of Corollary \ref{cor 5}. This completes the proof.

Institute of Mathematical Sciences, ShanghaiTech University, 393 Middle Huaxia Road, Pudong New District, Shanghai, China 

e-mail address: yosukeimagi@shanghaitech.edu.cn

\end{document}